\newcommand{\pvct}[1]{\bm{#1}}
\newcommand{\vct}[1]{\bm{\mathsf{#1}}}
\newcommand{\mtx}[1]{\bm{\mathsf{#1}}}
\newcommand{\pxx}{\pvct{x}}
\newcommand{\pyy}{\pvct{y}}
\newcommand{\cx}{\mathbb{C}}
\newcommand{\reals}{\mathbb{R}}
\newcommand{\alp}{\alpha}
\newcommand{\ba}{\beta}
\newcommand{\ta}{\theta}
\newcommand{\sa}{\sigma}
\newcommand{\ka}{\kappa}
\newcommand{\eps}{\varepsilon}
\newcommand{\Om}{\Omega}
\newcommand{\Da}{\Delta}
\newcommand{\mc}{\mathcal}
\newcommand{\wh}{\widehat}
\newcommand{\wt}{\widetilde}
\newtheorem{lemma}{Lemma}
\numberwithin{lemma}{section}
\theoremstyle{definition}
\newtheorem{remark}{Remark}
\numberwithin{remark}{section}
\begin{document}

\begin{center}
\textbf{\textsc{An accelerated, high-order accurate direct solver for the Lippmann-Schwinger equation
for acoustic scattering in the plane}}

\vspace{3mm}

\textit{{Abinand Gopal, Mathematical Institute, University of Oxford, Oxford, United Kingdom}}

\vspace{1mm}

\textit{{Per-Gunnar Martinsson, Department of Mathematics, University of Texas at Austin, USA}}

\vspace{3mm}

\begin{minipage}{140mm}
\textbf{Abstract:}
  An efficient direct solver for solving the Lippmann-Schwinger integral
  equation modeling acoustic scattering in the plane is presented.
  For a problem with
  $N$ degrees of freedom, the solver constructs an approximate inverse
  in $\mc{O}(N^{3/2})$ operations and then, given an incident field,
  can compute the scattered field in $\mc{O}(N \log N)$ operations.
  The solver is based on a previously published direct solver for integral
  equations that relies on rank-deficiencies in the off-diagonal blocks;
  specifically, the so-called Hierarchically Block Separable format is used.
  The particular solver described here has been reformulated in a way that
  improves numerical stability and robustness, and exploits the particular
  structure of the kernel in the Lippmann-Schwinger equation to accelerate
  the computation of an approximate inverse.
  The solver is coupled with a Nystr\"om discretization on a regular square
  grid, using a quadrature method developed by Ran Duan and Vladimir Rokhlin
  that attains high-order accuracy despite the singularity in the kernel of
  the integral equation.
  A particularly efficient solver is obtained when the direct solver is run at
  four digits of accuracy, and is used as a preconditioner to GMRES, with each forwards
  application of the integral operators accelerated by the FFT.
  Extensive numerical experiments are presented that illustrate the high performance
  of the method in challenging environments.
  Using the $10^{\rm th}$-order accurate version of the Duan-Rokhlin
  quadrature rule, the scheme is capable of solving problems on domains
  that are over 500 wavelengths wide to residual error below
  $10^{-10}$ in a couple of hours on a workstation, using 26M degrees of freedom.
\end{minipage}

\end{center}

\section{Introduction}
A recurring problem in scientific computing is the simulation of the
scattering of acoustic waves in media with variable wave speed, an
environment which arises in a wide variety of applications such as
seismology, ultrasound tomography, and sonar. We are specifically
interested in a situation in time-harmonic scattering with
\textit{wavenumber} $\ka > 0$, where a given \textit{incident field} $u^{\rm
inc}$ impinges on a medium with smooth wave speed $v = v(\pxx)$ that takes
on a constant value $v_{\rm free}$ everywhere outside of a finite domain
$\Omega$. The \textit{scattered field} can then be written
\begin{equation}
  u(\pxx) = \int_\Om G(\pxx,\pyy) \sa(\pyy)\,d\pyy,\quad x \in \reals^2,
  \label{eq:ansatz}
\end{equation}
where
\begin{equation}
  G(\pxx,\pyy) = \frac{i}{4} H_0(\ka\|\pxx-\pyy\|).
  \label{eq:kernel}
\end{equation}
The density $\sa$ is unknown a priori and can be obtained by solving the
\textit{Lippmann-Schwinger equation} given by
\begin{equation}
  \sa(\pxx) + \ka^2 b(\pxx) \int_{\Om} G(\pxx,\pyy)
  \sa(\pyy)\,d\pyy = -\ka^2 b(\pxx) u^{\rm inc}(\pxx), \quad \pxx
  \in \reals^2.
  \label{eq:ls}
\end{equation}
The function $b$, which we refer to as a \textit{scattering potential},
specifies how much the wave speed deviates from the free-space wave speed:
\[
  b(\pxx) = 1 - \frac{v_{\rm free}^2}{v(\pxx)^2}.
\]
As long as $b$ is bounded above by 1, existence and uniqueness of the
solution is guaranteed \cite[Theorem 8.7]{colton2012inverse}.

The presence of the free-space fundamental solution in \eqref{eq:ansatz}
makes the formulation immune to pollution error
(cf.~\cite{babuska1997pollution}) and guarantees that the scattered field
will satisfy the Sommerfeld radiation condition, avoiding the need for
special techniques, such as absorbing boundary conditions
\cite{engquist1977absorbing} or perfectly matched layers
\cite{berenger1994perfectly}, which would be required if a differential
equation formulation of the scattering problem were directly discretized.
Moreover, \eqref{eq:ls} is typically a Fredholm equation of the second kind
(for instance when viewed as an operator on $L^{2}(\Omega)$), which admits
well-conditioned high-order accurate discretizations.

\begin{remark}
  The equation \eqref{eq:ls} is obtained by inserting \eqref{eq:ansatz} as
  an ansatz into the variable-coefficient Helmholtz equation
  \begin{equation}
    -\Da u^{\rm tot}(\pxx) - \ka^2 (1 - b(\pxx)) u^{\rm tot}(\pxx) = 0,\quad
    \pxx \in \reals^2,
    \label{eq:tot}
  \end{equation}
where $u^{\rm tot} = u^{\rm inc} + u$ is the total field, and then applying
the fact that the incident field satisfies the homogeneous free-space
Helmholtz equation.
\end{remark}

Integral equation formulations of scattering problems do come with
certain trade-offs. Particularly, they upon discretization lead to linear
systems with dense coefficient matrices. Additionally, the fact that the
kernel in \eqref{eq:ls} is singular necessitates some care in order to
attain high-order accurate discretizations. Both of these issues were
addressed in \cite{duan2009high}, where the authors present a high-order
accurate quadrature rule that is tailored specifically for \eqref{eq:ls},
and use an FFT to rapidly apply the global operator. (This is possible
since the kernel $G(\pxx,\pyy)$ in \eqref{eq:ls} is translation invariant,
and a regular square mesh is used for the discretization.) This allows for
iterative solvers to be used, and the overall solver is shown to be highly
effective in many environments. However, it turns out that as the
wavenumber $\kappa$ grows, the iteration count required grows
prohibitively, especially when the scatterer is highly refractive.  This
may seem surprising given that \eqref{eq:ls} is a second kind Fredholm
equation, but it is explained by the fact that as $\kappa$ grows, the
magnitude of the second term in \eqref{eq:ls} grows as $\kappa^{2}$, which
means that it quickly comes to dominate the first term, and the overall
behavior of the integral equation tends towards the behavior of a first
kind equation.

To overcome the difficulties posed by the slow convergence of iterative methods
for problems with oscillatory solutions, one may look to utilize a direct solver,
leveraging the rank structure of the linear system. Many data-sparse
matrix formats and corresponding inversion algorithms have been proposed,
including $\mc{H}$- \cite{hackbusch1999sparse}, $\mc{H}^2$-
\cite{borm2010efficient}, Hierarchically Off-Diagonal Low Rank (HODLR)
\cite{ambikasaran2013fast} and Hierarchically Block/Semi- Separable
(HBS/HSS) matrices
\cite{martinsson2005fast,chandrasekaran2004divide,xia2010fast}.
These format are differentiated by whether or not the
interactions between adjacent patches in the domain are compressed and
whether or not the basis matrices used in the hierarchical structure are
taken to be nested \cite[Sec.~15.8]{martinsson2019book}.
In this work, we rely on the so called HBS format, which
uses nested bases ($\mathcal{H}^2$ rather than $\mathcal{H}$ structure),
and does compress adjacent patches (``weak admissibility''). These ideas
trace back to, e.g.,
\cite{martinsson2005fast,1996_mich_elongated,greengard1991numerical,starr_rokhlin}.

There are two general approaches to inverting HBS matrices. The first
approach is to compute the factors of a data-sparse representation of the
inverse through use of explicit formulas in terms of the factors of the
original HBS matrix. The prototypical example of this is reviewed in
\cite{gillman2012direct}. Often, these can be viewed as derived through the
recursive usage of variants of the Woodbury formula. Generally, such
methods achieve high practical speed and are relatively memory efficient.
The weakness of these methods is that they can run into stability issues if
ill-conditioned matrices are encountered during the inversion process. This
is especially prevalent for problems like the Lippmann-Schwinger equation
where the off-diagonal blocks of the discretized system can vary
drastically in magnitude due to the presence of the scattering potential.
The second approach is to use rank structure to create an equivalent sparse
linear system and then use black box sparse direct solvers to compute an LU
factorization. An example of this is presented in \cite{ho2012fast}. The
advantage of this approach is that pivots can be adaptively determined to
reduce fill-in while still avoiding the inversion of ill-conditioned
matrices, leading to a compromise between stability and speed. However,
finding the optimal pivots is generally unfeasible, and in our experience,
without careful tuning, this approach while very robust is significantly
more costly in both computational time and memory than the first approach.

In this paper, we present a new direct solver, following the first
approach, where the inversion formulas are intentionally derived to be
stable for the Lippmann-Schwinger problem and the same basis matrices are
used for both the original HBS matrix and the computed inverse. This solver
is based on discrete scattering matrices and is closely related to the
solver derived in \cite{bremer2015high} for scattering in homogeneous
media. We empirically find that this scheme avoids instabilities, while
maintaining, if not exceeding, the performance of other explicit
inversion formula methods. Furthermore, it compresses only the free-space
kernel which leads to speedups in environments where multiple scattering
potentials are of interest, such as inverse scattering. The direct solver
can also be used as a preconditioner and then combined with an iterative
solver. We demonstrate the stability and speed of our solver through
extensive numerical experiments on several challenging problems.

The paper is organized as follows: In Section \ref{s:prelim}, we
review some preliminaries necessary to describe our solver, namely the
interpolative decomposition and the high-order discretization of
\eqref{eq:ls}. Section \ref{s:hbscomp} defines the HBS matrix format and
discusses efficient techniques for computing the compressed version of our
linear system in this format. Section \ref{s:direct} describes our direct
solver, gives a brief complexity estimate, and a discussion on how this
method relates to existing approaches. In Section \ref{s:precon}, we
motivate using our direct solver as a preconditioner. Section
\ref{s:numerics} contains our numerical tests, and the final section
summarizes our findings and outlines some potential directions for future
work.

\section{Preliminaries}
\label{s:prelim}
\subsection{Interpolative decomposition (ID)}
\label{s:id}
The interpolative decomposition is a matrix factorization in which a subset of
the columns (rows) are used to approximately span the column (row) space of
the matrix. To be precise, for a matrix $\mtx{A} \in \cx^{m \times n}$, a rank
$k$ column ID takes the form
\begin{equation}
  \mtx{A} \approx \mtx{A}(:,\mc{I}) \mtx{W},
  \label{eq:id}
\end{equation}
where $\mc{I} \subset [1,2,\hdots,n]$ is a $k$-element index vector and
$\mtx{W} \in \cx^{k \times n}$ is a matrix for which
$\mtx{W}(\colon,\mc{I}) = \mtx{I}$ (in other words, $\mtx{W}$ contains
the $k \times k$ identity matrix as a submatrix). For \eqref{eq:id} to be a useful
approximation, we would like the bound
\begin{equation}
  \| \mtx{A} -  \mtx{A}(:,\mc{I}) \mtx{W} \|_2 \leq (1+C) \sa_{k+1}(\mtx{A})
  \label{eq:idapprox}
\end{equation}
to hold for some modest size constant $C$. In \eqref{eq:idapprox},
$\sa_{k+1}(\mtx{A})$ is the $(k+1)$th largest singular value of $\mtx{A}$.
We call the elements of $\mc{I}$ the \textit{skeleton indices} and the
remaining indices $[1,2,\cdots,n] \backslash \mc{I}$ the \textit{residual
indices}. The rank $k$ row ID, where all rows of $\mtx{A}$ are represented
as linear combinations of a select subset, can be defined analogously:
\begin{equation*}
  \mtx{A} \approx \mtx{X} \mtx{A}(\mc{J},:),
\end{equation*}
where now $\mc{J}$ is a $k$-element index vector and $\mtx{X} \in \cx^{m
\times k}$ satisfies $\mtx{X}(\mc{J},\colon) = \mtx{I}$.

The ID is well suited for our application since it is highly storage
efficient (observe that whenever the matrix entries of $\mtx{A}$ can be
computed on the fly, the factorization \eqref{eq:id} requires the storage
of only the index vector $\mc{I}$, and the $k \times (n-k)$ nonzero
entries of $\mtx{W}$, regardless of how large $m$ is). Moreover, the
ID enables us to construct low rank approximations using the method
of ``proxy sources'' (see \cite[Sec.~5.2]{cheng2005compression} and
\cite[Sec.~17.2]{martinsson2019book}), which is
crucial for achieving a complexity better than $\mc{O}(N^2)$,
cf.~Section \ref{s:compression}. An additional
benefit is that the presence of the identity submatrix in $\mtx{W}$ and
$\mtx{X}$ can be
leveraged for slightly more efficient matrix-matrix multiplication.

In addition to satisfying \eqref{eq:idapprox}, it is for purposes of
numerical stability important that the
entries of $\mtx{W}$ in \eqref{eq:id} are not large in magnitude.
It can be shown that there exists an ID such that all
entries of $\mtx{W}$ have magnitude at most 1, but we are not aware of a
polynomial-time algorithm for computing a factorization with this property
guaranteed. While there do exist polynomial-time algorithms when this
condition is slightly relaxed (see for example \cite{gu1996efficient}), in
practice we find that it is more efficient and effectively as robust to
compute an ID using the standard column-pivoted QR algorithm (CPQR). We
refer the reader to \cite{cheng2005compression} for more details. For a
rank $k$ approximation, only $\mc{O}(mnk)$ operations are required. This
can be further reduced to $\mc{O}(m n \log k + k^2 n)$ through the use of
randomized algorithms \cite{woolfe2008fast}.

\subsection{High-order accurate discretization}
\label{s:quadcorr}

To discretize \eqref{eq:ls}, we place a uniform grid $\{\pxx_i\}_{i=1}^N$
with mesh width $h$ on the rectangle $\Omega$. (If needed, we can slightly
increase the length of one side of the rectangle to ensure that it is a multiple of $h$.)
We then use a Nystr\"om discretization which collocates \eqref{eq:ls} at the
nodes $\{\pxx_i\}_{i=1}^N$, while replacing the integrand by a quadrature.
We could in this step use a trapezoidal rule quadrature, and simply omit one
point to avoid the singularity in the kernel. Such a ``punctured trapezoidal rule''
Nystr\"om discretization would take the form
\begin{equation} \sa(\pxx_i) + \ka^2 h^2
  b(\pxx_i) \sum_{\substack{j=1 \\ j \neq i}}^N
G(\pxx_i,\pxx_j)\sa(\pxx_j) = -\ka^2 b(\pxx_i) u^{\rm
inc}(\pxx_i),
\qquad i \in \{1,2,\dots,N\}.
\label{eq:nystrom}
\end{equation}
This crude treatment of the singularity results in roughly $2^{\rm
nd}$-order convergence.

To improve the convergence order, we employ a correction technique proposed by
Duan and Rokhlin \cite{duan2009high} that modifies the quadrature weights
for a small number of elements near the diagonal. These corrections are derived by
exploiting the form of the singularity of the Hankel function in \eqref{eq:kernel}
and smoothness in $\sa$, and result in a modified linear system
\begin{multline}
  \sa(\pxx_i) + \ka^2 h^2 b(\pxx_i) \sum_{\substack{j=1 \\ j \neq i}}^N
  G(\pxx_i,\pxx_j)\sa(\pxx_j) + \ka^2 h^2 b(\pxx_i) \sum_{j \in \mc{S}_i}
  \tau_{ij} \sa(\pxx_j)\\ = -\ka^2 b(\pxx_i)
  u^{\rm inc}(\pxx_i),
  \qquad i \in \{1,2,\dots,N\},
  \label{eq:corrected}
\end{multline}
where $\mc{S}_{i}$ identifies the set of indices to be corrected.
Remarkably, convergence of order $\mc{O}(h^4 \log (1/h)^2)$ can be
attained by correcting just a single point (so that $\mc{S}_{i} = \{i\}$).
To attain higher orders, slightly larger stencils are required, as shown
in Figure \ref{fig:qc}; for instance order $\mc{O}(h^{10} \log (1/h)^2)$
requires 25 points. Experimentally,
we do not observe the effect of the logarithm terms in these convergence
orders, so we informally refer to the order of these rates in the sequel by
just the polynomial part (e.g. we refer to the $\mc{O} (h^4 \log (1/h)^2)$
scheme as a $4^{\rm th}$-order scheme) for ease of exposition.
The quadrature corrections $\tau_{ij}$ depend only on the quantity $\ka h$, in
addition to the order of the correction, so after a
precomputation they can be cheaply interpolated for different values of
$\ka$ and $h$. We refer the reader to \cite{duan2009high} for the details
of computing the quadrature corrections.

\begin{figure}
  \centering
  \,\hspace{2mm} $O(h^4 \log(1/h)^2)$ \hspace{7mm} $O(h^6 \log(1/h)^2)$ \hspace{7mm}
  $O(h^8 \log(1/h)^2)$ \hspace{7mm} $O(h^{10} \log(1/h)^2)$ \vspace{.5em}

  \scalebox{1.2}{
  \begin{tikzpicture}
    \draw[step=0.3,black,thick] (.1,.1) grid (2.3,2.3);
    \fill[red, draw=black] (1.2,1.2) circle (1mm);
  \end{tikzpicture}
  \hspace{1.5em}
  \begin{tikzpicture}
    \draw[step=0.3,black,thick] (.1,.1) grid (2.3,2.3);
    \fill[red, draw=black] (1.2,1.2) circle (1mm);
    \fill[red, draw=black] (.9,1.2) circle (1mm);
    \fill[red, draw=black] (1.5,1.2) circle (1mm);
    \fill[red, draw=black] (1.2,1.5) circle (1mm);
    \fill[red, draw=black] (1.2,.9) circle (1mm);
  \end{tikzpicture}
  \hspace{1.5em}
  \begin{tikzpicture}
    \draw[step=0.3,black,thick] (.1,.1) grid (2.3,2.3);
    \fill[red, draw=black] (1.2,1.2) circle (1mm);
    \fill[red, draw=black] (.9,1.2) circle (1mm);
    \fill[red, draw=black] (1.5,1.2) circle (1mm);
    \fill[red, draw=black] (1.2,1.5) circle (1mm);
    \fill[red, draw=black] (1.2,.9) circle (1mm);
    \fill[red, draw=black] (1.5,1.5) circle (1mm);
    \fill[red, draw=black] (.9,.9) circle (1mm);
    \fill[red, draw=black] (.9,1.5) circle (1mm);
    \fill[red, draw=black] (1.5,.9) circle (1mm);
    \fill[red, draw=black] (1.8,1.2) circle (1mm);
    \fill[red, draw=black] (1.2,1.8) circle (1mm);
    \fill[red, draw=black] (.6,1.2) circle (1mm);
    \fill[red, draw=black] (1.2,.6) circle (1mm);
  \end{tikzpicture}
  \hspace{1.5em}
  \begin{tikzpicture}
    \draw[step=0.3,black,thick] (.1,.1) grid (2.3,2.3);
    \fill[red, draw=black] (1.2,1.2) circle (1mm);
    \fill[red, draw=black] (.9,1.2) circle (1mm);
    \fill[red, draw=black] (1.5,1.2) circle (1mm);
    \fill[red, draw=black] (1.2,1.5) circle (1mm);
    \fill[red, draw=black] (1.2,.9) circle (1mm);
    \fill[red, draw=black] (1.5,1.5) circle (1mm);
    \fill[red, draw=black] (.9,.9) circle (1mm);
    \fill[red, draw=black] (.9,1.5) circle (1mm);
    \fill[red, draw=black] (1.5,.9) circle (1mm);
    \fill[red, draw=black] (1.8,1.2) circle (1mm);
    \fill[red, draw=black] (1.2,1.8) circle (1mm);
    \fill[red, draw=black] (.6,1.2) circle (1mm);
    \fill[red, draw=black] (1.2,.6) circle (1mm);
    \fill[red, draw=black] (.6,1.5) circle (1mm);
    \fill[red, draw=black] (.9,1.8) circle (1mm);
    \fill[red, draw=black] (1.5,1.8) circle (1mm);
    \fill[red, draw=black] (1.8,1.5) circle (1mm);
    \fill[red, draw=black] (.6,.9) circle (1mm);
    \fill[red, draw=black] (.9,.6) circle (1mm);
    \fill[red, draw=black] (1.5,.6) circle (1mm);
    \fill[red, draw=black] (1.8,.9) circle (1mm);
    \fill[red, draw=black] (2.1,1.2) circle (1mm);
    \fill[red, draw=black] (1.2,2.1) circle (1mm);
    \fill[red, draw=black] (.3,1.2) circle (1mm);
    \fill[red, draw=black] (1.2,.3) circle (1mm);
  \end{tikzpicture}
  }
  \caption{The ``stencils'' of the Duan-Rokhlin quadrature corrections
  described in Section \ref{s:quadcorr}. The index set $\mc{S}_{i}$ is
  shown with red dots on a local grid centered around $\pxx_{i}$. The
  $4^{\rm th}$-order stencil (leftmost) requires only a single correction point, while
  the $6^{\rm th}$-, $8^{\rm th}$-, and $10^{\rm th}$-order stencils
  require 5, 13, and 25 points, respectively.}
  \label{fig:qc}
\end{figure}
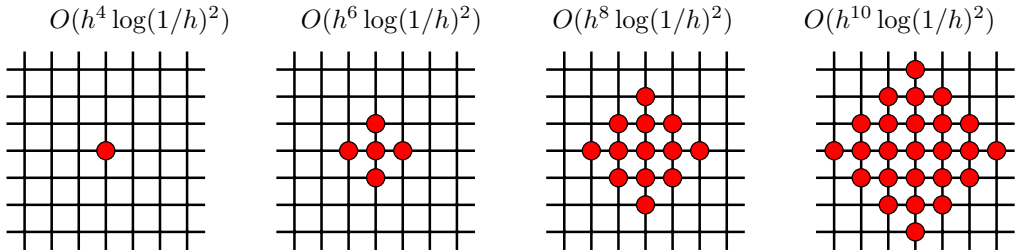

For notational convenience, we hereafter restrict our focus to the case of
$4^{\rm th}$-order (i.e. diagonal) corrections and outline as we go any
relevant additional considerations for larger stencils. Equation
\eqref{eq:corrected} can now be expressed as the linear system
\begin{equation}
  (\mtx{I} + \mtx{B}\mtx{G}) \vct{q}  = \vct{f},
  \label{eq:linsys}
\end{equation}
where $\mtx{I}$ is the $N \times N$ identity matrix, $\mtx{B}$ is a
diagonal matrix with $\mtx{B}(i,i) = \ka^2 b(\pxx_i)$, $\vct{q}(i) =
\sa(\pxx_i)$, $\vct{f}(i) = -\ka^2 b(\pxx_i) u^{\rm inc}(\pxx_i)$, and
$\mtx{G}$ is the matrix with entries given by
\begin{equation}
  \mtx{G}(i,j) = \begin{cases} h^2 G(\pxx_i,\pxx_j), &
  i \neq j \\ h^2 \tau_{ii}, & i = j \end{cases},
  \label{eq:Gmat}
\end{equation}
for $i,j=1,2,\hdots,N$. Incorporating higher-order corrections requires
adding $\mtx{B}$ times a Toeplitz matrix to the left-hand side of
\eqref{eq:linsys}, so the coefficient matrix in \eqref{eq:linsys} can
always be applied to vectors in quasilinear time using the FFT.

\section{HBS compression}
\label{s:hbscomp}
The linear system \eqref{eq:linsys} involves a coefficient matrix that is
dense, but rank structured. In this section, we describe the so-called
\textit{Hierarchically Block Separable (HBS)} format that we use to
represent this matrix. In Section \ref{s:hbs}, we briefly introduce our
notation, referring to \cite[Ch.~14]{martinsson2019book} for additional details.
Section \ref{s:compression} describes how to efficiently construct the low rank
factorizations that are required in what is sometimes referred to as the
``compression step'' in a rank-structured algorithm.

\subsection{HBS matrices}
\label{s:hbs}

To define an $N \times N$ HBS matrix, we first need a binary tree structure
on the row and column indices of the matrix. We label the root of the tree
by 1, and form the corresponding index vector $\mc{I}_1 = [1,2,\cdots,N]$.
We partition the indices into two disjoint index vectors $\mc{I}_2$ and
$\mc{I}_3$ (i.e. $\mc{I}_1 = \mc{I}_2 \cup \mc{I}_3$ and $\mc{I}_2 \cap
\mc{I}_3 = \varnothing$), which correspond to nodes on the tree which are
the children of the root node and which we label as 2 and 3. We similarly
split $\mc{I}_2$ into index vectors $\mc{I}_4$ and $\mc{I}_5$, which then
correspond to the children of node 2 on the tree and are accordingly
labeled nodes 4 and 5. We then do the analogous partitioning of $\mc{I}_3$
to obtain $\mc{I}_6$ and $\mc{I}_7$. We continue this process until all
nodes with no children have only a handful of indices. We refer to nodes
without children as \textit{leaves} and all other nodes in the tree as
\textit{parents}.

The HBS format is flexible and can be used with a variety of different
trees, but for simplicity we restrict our attention to uniform trees that
have $2^{L}$ leaf boxes for some positive integer $L$. We label the
\textit{levels} in the tree so that $\ell=0$ denotes the root, level $\ell$
has $2^{\ell}$ nodes, and the leaves are at level $\ell = L$.  Figure
\ref{fig:tree} illustrates our node labeling convention for a tree with $L
= 3$.

\begin{figure}
  \centering
  \begin{tikzpicture}

    \draw (0pt,0pt) -- (-80pt,-30pt);
    \draw (0pt,0pt) -- (80pt,-30pt);
    \draw[fill=white] (0pt,0pt) circle (10pt);
    \node[black] at (0pt,0pt) {\large 1};

    \draw (-80pt,-30pt) -- (-115pt,-65pt);
    \draw (-80pt,-30pt) -- (-45pt,-60pt);
    \draw (80pt,-30pt) -- (45pt,-60pt);
    \draw (80pt,-30pt) -- (115pt,-60pt);

    \draw[fill=white] (-80pt,-30pt) circle (10pt);
    \node[black] at (-80pt,-30pt) {\large \textcolor{black}{$2$}};
    \draw[fill=white] (80pt,-30pt) circle (10pt);
    \node[black] at (80pt,-30pt) {\large \textcolor{black}{$3$}};

    \draw (-115pt,-65pt) -- (-135pt,-105pt);
    \draw (-115pt,-65pt) -- (-95pt,-105pt);
    \draw (-45pt,-65pt) -- (-65pt,-105pt);
    \draw (-45pt,-65pt) -- (-25pt,-105pt);
    \draw (115pt,-65pt) -- (135pt,-105pt);
    \draw (115pt,-65pt) -- (95pt,-105pt);
    \draw (45pt,-65pt) -- (65pt,-105pt);
    \draw (45pt,-65pt) -- (25pt,-105pt);

    \draw[fill=white] (-115pt,-65pt) circle (10pt);
    \node[black] at (-115pt,-65pt) {\large \textcolor{black}{$4$}};
    \draw[fill=white] (-45pt,-65pt) circle (10pt);
    \node[black] at (-45pt,-65pt) {\large \textcolor{black}{$5$}};
    \draw[fill=white] (45pt,-65pt) circle (10pt);
    \node[black] at (45pt,-65pt) {\large \textcolor{black}{$6$}};
    \draw[fill=white] (115pt,-65pt) circle (10pt);
    \node[black] at (115pt,-65pt) {\large \textcolor{black}{$7$}};

    \draw[fill=white] (-135pt,-105pt) circle (10pt);
    \node[black] at (-135pt,-105pt) {\large \textcolor{black}{$8$}};
    \draw[fill=white] (-95pt,-105pt) circle (10pt);
    \node[black] at (-95pt,-105pt) {\large \textcolor{black}{$9$}};
    \draw[fill=white] (-65pt,-105pt) circle (10pt);
    \node[black] at (-65pt,-105pt) {\large \textcolor{black}{$10$}};
    \draw[fill=white] (-25pt,-105pt) circle (10pt);
    \node[black] at (-25pt,-105pt) {\large \textcolor{black}{$11$}};
    \draw[fill=white] (65pt,-105pt) circle (10pt);
    \node[black] at (65pt,-105pt) {\large \textcolor{black}{$13$}};
    \draw[fill=white] (25pt,-105pt) circle (10pt);
    \node[black] at (25pt,-105pt) {\large \textcolor{black}{$12$}};
    \draw[fill=white] (135pt,-105pt) circle (10pt);
    \node[black] at (135pt,-105pt) {\large \textcolor{black}{$15$}};
    \draw[fill=white] (95pt,-105pt) circle (10pt);
    \node[black] at (95pt,-105pt) {\large \textcolor{black}{$14$}};

  \end{tikzpicture} \\[1em]
  \caption{A tree with four levels, corresponding to the partitioning of the
  physical domain in Figure \ref{fig:boxes}.}
  \label{fig:tree}
\end{figure}
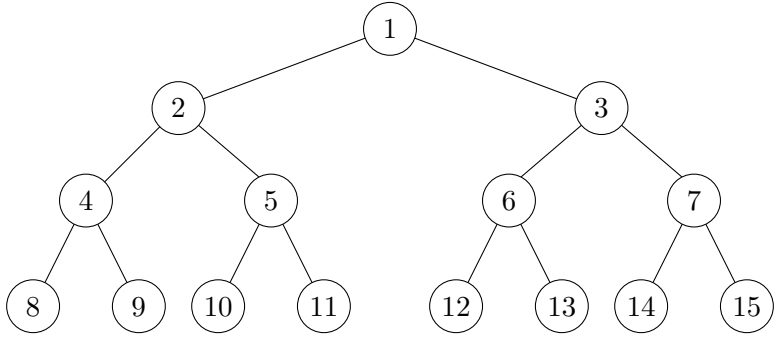

Given such a tree structure, we say that a matrix $\mtx{A}$ is an HBS
matrix if the following two properties hold:
\begin{enumerate}
  \item For any two sibling nodes (i.e. nodes with the same parent) in the
    tree $\alp$ and $\ba$, there exist matrices $\wh{\mtx{U}}_\alp \in
    \cx^{n_\alp \times k_\alp}$, $\wh{\mtx{V}}_\ba \in \cx^{n_\ba \times
    k_\ba}$, and $\mtx{A}_{\alp,\ba} \in \cx^{k_\alp \times k_\ba}$ such
    that
    \[
      \mtx{A}(\mc{I}_\alp,\mc{I}_\ba) = \wh{\mtx{U}}_\alp \mtx{A}_{\alp,\ba}
      \wh{\mtx{V}}_\ba^*,
    \]
  with $k_\alp < n_\alp = |\mc{I}_\alp|$ and  $k_\ba < n_\ba =
  |\mc{I}_\ba|$.
  \item For any parent node $\tau$ with children $\alp$ and $\ba$ there
    exist matrices $\mtx{U}_\tau, \mtx{V}_\tau \in \cx^{(k_\alp + k_\ba)
    \times k_\tau}$, such that
    \[
      \wh{\mtx{U}}_\tau = \begin{bmatrix} \wh{\mtx{U}}_\alp & \mtx{0} \\
      \mtx{0} & \wh{\mtx{U}}_\ba \end{bmatrix} \mtx{U}_\tau, \quad
      \wh{\mtx{V}}_\tau = \begin{bmatrix} \wh{\mtx{V}}_\alp & \mtx{0} \\
      \mtx{0} & \wh{\mtx{V}}_\ba \end{bmatrix} \mtx{V}_\tau.
    \]
    For notational convenience, we also define $\wh{\mtx{U}}_\tau = \mtx{U}_\tau$
    and $\wh{\mtx{V}}_\tau = \mtx{V}_\tau$ when $\tau$ is a leaf.
\end{enumerate}

The first condition imposes that $\mtx{A}$ can be tessellated in a fashion
with numerically low-rank off-diagonal blocks. This condition on its own is
the definition of the well-known Hierarchically Off-Diagonal Low-Rank
(HODLR) format \cite{ambikasaran2013fast}. The second condition
distinguishes the HBS format from the HODLR format by specifying that the
basis matrices can be taken to be nested. We refer to
\cite[Ch.~14]{martinsson2019book} for more details.

\begin{remark}
Vital to the efficacy of the HBS format is that the only matrices that need
to be stored are the small basis matrices $\mtx{U}_\tau$ and $\mtx{V}_\tau$
for each box $\tau$, the matrices $\mtx{A}(\mc{I}_\tau,\mc{I}_\tau)$ for
all leaves $\tau$, and the sibling-sibling interaction matrices
$\mtx{A}_{\alp,\ba}$ and $\mtx{A}_{\ba,\alp}$ for all sibling pairs $\alp$
and $\ba$. All necessary matrix algebra can then be done with these factors
alone, without explicitly forming large blocks of the full matrix or the
extended basis matrices $\wh{\mtx{U}}_\tau$ and $\wh{\mtx{V}}_\tau$.
\label{rmk:hbsfactors}
\end{remark}

\subsection{Computing an HBS representation}
\label{s:compression}

The coefficient matrix in \eqref{eq:linsys} can be well-approximated by an
HBS matrix with modest ranks if a tree structure is chosen based on a
division of physical space. We will compress only the matrix
$\mtx{G}$, as opposed to the full operator $\mtx{I} + \mtx{B} \mtx{G}$,
since this makes the compression independent of the scattering potential,
which is useful in a variety of applications and allows us to leverage
translation invariance of the kernel to accelerate the computation.

We let the root of our tree correspond to the entire domain $\Om$, and let
$\mc{I}_1 = [1,2,\hdots,N]$. We then bisect $\Om$ vertically in half and
associate the nodes 2 and 3 with the resulting left and right boxes,
respectively. The vectors $\mc{I}_2$ and $\mc{I}_3$ will contain indices
that correspond to points in the left and right half of the domain,
respectively. We now split the box corresponding to the node 2 in half
horizontally, and associate the upper box with node 4 and lower box with
node 5, populating the index vectors $\mc{I}_4$ and $\mc{I}_5$ with the
indices of the points $\pxx_i$ that lie in their corresponding box.
Similarly, boxes for nodes 6 and 7 are obtained by the analogous splitting
of the box for node 3. We continue this procedure, alternating between
vertical and horizontal cuts at each level until all undivided boxes
contain only a few of the $\pxx_i$. We will slightly abuse notation and
refer to a node $\tau$ in the tree and its corresponding box
$\Omega_{\tau}$ interchangeably. Figure \ref{fig:boxes} illustrates the
partition of physical space that corresponds with the tree in Figure
\ref{fig:tree}.

\begin{figure}
  \centering
  \scalebox{1.2}{
\begin{tikzpicture}
  \draw[black,thick] (0pt,0pt) rectangle (64pt,64pt);

  \draw[black,thick] (25pt+64pt,0pt) rectangle (25pt+1.5*64pt,64pt);
  \draw[black,thick] (25pt+1.5*64pt,0pt) rectangle (25pt+2*64pt,64pt);

  \node[black] at (25pt+8pt,32pt) {\large 1};

  \draw[black,thick] (2*25pt+2*64pt,0pt) rectangle (2*25pt+2.5*64pt,.5*64pt);
  \draw[black,thick] (2*25pt+2*64pt,.5*64pt) rectangle (2*25pt+2.5*64pt,64pt);
  \draw[black,thick] (2*25pt+2.5*64pt,0pt) rectangle (2*25pt+3*64pt,.5*64pt);
  \draw[black,thick] (2*25pt+2.5*64pt,.5*64pt) rectangle (2*25pt+3*64pt,64pt);

  \node[black] at (2*25pt+46pt+8pt,32pt) {\large 2};
  \node[black] at (2*25pt+77pt+8pt,32pt) {\large 3};

  \draw[black,thick] (3*25pt+3*64pt,0pt) rectangle (3*25pt+3.25*64pt,.5*64pt);
  \draw[black,thick] (3*25pt+3.25*64pt,0pt) rectangle (3*25pt+3.5*64pt,.5*64pt);
  \draw[black,thick] (3*25pt+3.5*64pt,0pt) rectangle (3*25pt+3.75*64pt,.5*64pt);
  \draw[black,thick] (3*25pt+3.75*64pt,0pt) rectangle (3*25pt+4*64pt,.5*64pt);

  \node[black] at (3*25pt+111pt+8pt,32pt+15pt) {\large 4};
  \node[black] at (3*25pt+142pt+8pt,32pt+15pt) {\large 6};
  \node[black] at (3*25pt+111pt+8pt,32pt-15pt) {\large 5};
  \node[black] at (3*25pt+142pt+8pt,32pt-15) {\large 7};

  \draw[black,thick] (3*25pt+3*64pt,.5*64pt) rectangle (3*25pt+3.25*64pt,1*64pt);
  \draw[black,thick] (3*25pt+3.25*64pt,.5*64pt) rectangle (3*25pt+3.5*64pt,1*64pt);
  \draw[black,thick] (3*25pt+3.5*64pt,.5*64pt) rectangle (3*25pt+3.75*64pt,1*64pt);
  \draw[black,thick] (3*25pt+3.75*64pt,.5*64pt) rectangle (3*25pt+4*64pt,1*64pt);
  \node[black] at (4*25pt+168pt+8pt-1pt,32pt+15pt) {\large 8};
  \node[black] at (4*25pt+168pt+8pt+15pt-1pt,32pt+15pt) {\large 9};
  \node[black] at (4*25pt+168pt+8pt-2pt,32pt+15pt-32pt) {\large 10};
  \node[black] at (4*25pt+168pt+8pt+15pt,32pt+15pt-32pt) {\large 11};
  \node[black] at (4*25pt+168pt+8pt+32pt-2pt,32pt+15pt) {\large 12};
  \node[black] at (4*25pt+168pt+8pt+15pt+32pt-1pt,32pt+15pt) {\large 13};
  \node[black] at (4*25pt+168pt+8pt-2pt+32pt,32pt+15pt-32pt) {\large 14};
  \node[black] at (4*25pt+168pt+8pt+15pt+32pt,32pt+15pt-32pt) {\large 15};
\end{tikzpicture}
}
\caption{Division of $\Om$ into boxes corresponding to the tree in Figure
\ref{fig:tree}.}
\label{fig:boxes}
\end{figure}
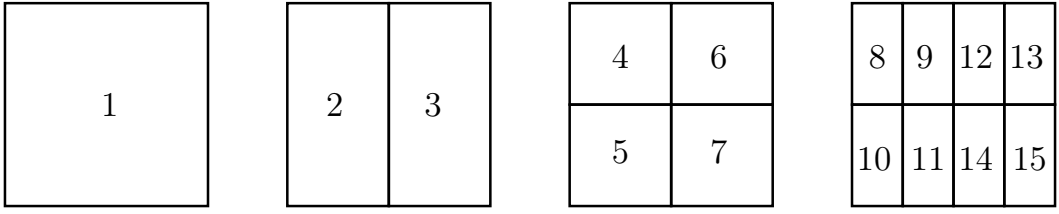

For such a tree and a compression tolerance $\eps \in (0,1)$, we expect there to
be an HBS matrix $\mtx{G}^\eps$, such that
\begin{equation}
  \frac{\|\mtx{G} - \mtx{G}^{\eps}\|_2}{\|\mtx{G}\|_2} \leq \eps,
  \label{eq:matapprox}
\end{equation}
where the ranks $k_\alp$ and $k_\ba$ at each level in the definition of the
HBS matrix increase only gradually as $\eps \to 0$. After, the HBS matrix
$\mtx{G}^\eps$ is obtained, it can be inverted efficiently using the
algorithm in Section \ref{s:solver}. In the remainder of this section we
discuss how to efficiently compute such a $\mtx{G}^\eps$.

For the reasons mentioned in Section \ref{s:id}, we use the ID for low-rank
factorization. Let $\tau$ be any leaf node. We will obtain the factor
$\mtx{U}_\tau$ from a row ID of the form
\begin{equation}
  \mtx{G}(\mc{I}_\tau,\mc{I}_\tau^c) \approx \mtx{U}_\tau
  \mtx{G}(\mc{I}_\tau^{\rm skel},\mc{I}_\tau^c).
  \label{eq:leafid}
\end{equation}
Since $|\mc{I}_\tau^c| = \mc{O}(N)$, directly applying the CPQR algorithm
would be expensive for large $N$, and ultimately yield an $\mc{O}(N^2)$
algorithm. Fortunately, this cost can be reduced
using the method of proxy sources \cite{cheng2005compression}. The idea
here is that the submatrix $\mtx{G}(\mc{I}_\tau,\mc{I}_\tau^c)$ can be
interpreted as encoding the interactions induced by the kernel
\eqref{eq:kernel} of charges located at $\{ \pxx_i \}_{i \in
\mc{I}_\tau^c}$ on the target points $\{ \pxx_i \}_{i \in
\mc{I}_\tau}$. The proxy source technique is based on a result from
potential theory that states that every solution to the Helmholtz
equation generated by sources outside of the box $\Omega_{\tau}$
can be exactly replicated by layer potentials on the boundary
of $\Omega_{\tau}$ \cite[Sec.~17.2]{martinsson2019book}. Numerically,
we approximate the effect of a continuum layer potential on
$\partial\Omega_{\tau}$ by placing sources on a thin ``ring'' of
collocation points that are just outside of $\Omega_{\tau}$, as
shown in Figure \ref{fig:proxy}(b), where the target points in
$\mc{I}_{\tau}$ are drawn in blue and the proxy points in
$\mc{I}_{\tau}^{\rm proxy}$ are drawn in red.
In other words, any field on the box $\tau$ induced by charges at
$\{\pxx_i \}_{i \in \mc{I}_\tau^c}$ can be well-approximated by
charges places at the proxy points
$\{\pxx_i \}_{i \in \mc{I}_{\tau}^{\rm proxy}}$. Expressed in the language of
linear algebra, our claim is that the relation
\begin{equation}
  \operatorname{colspan}(\mtx{G}(\mc{I}_\tau,\mc{I}_\tau^c))
  \subseteq
  \operatorname{colspan}(\mtx{G}(\mc{I}_\tau,\mc{I}^{\rm
  proxy}_{\tau}))
  \label{eq:proxy}
\end{equation}
holds to high accuracy.
Thus, if we compute $\mtx{U}_\tau$ and an index vector $\mc{I}_\tau^{\rm
skel}$ such that $\mtx{G}(\mc{I}_\tau,\mc{I}^{\rm proxy}_{\tau}) \approx
\mtx{U}_\tau \mtx{G}(\mc{I}_\tau^{\rm skel},\mc{I}_{\tau}^{\rm proxy})$,
then we expect that the same $\mtx{U}_\tau$ and $\mc{I}_\tau^{\rm skel}$
satisfy \eqref{eq:leafid} up to whatever accuracy \eqref{eq:proxy} holds to.

The accuracy of the proxy source technique depends on how ``thick'' one
takes the ring of proxy sources to be. Using just a single wide line of
sources, as shown in Figure \ref{fig:proxy}(b), typically leads to 4 or 5 accurate digits,
while using a double wide ring, as shown in Figure \ref{fig:proxy}(c) is almost always
enough to attain 10 accurate digits. If a triple wide ring is used, full
double precision accuracy typically results. See Table \ref{table:proxy} for details.

By the symmetry of $\mtx{G}$, we can set $\mtx{V}_\tau = \mtx{U}_\tau$ and
use the column skeleton indices $\mc{I}_\tau^{\rm skel}$ also for the row
skeletons.

\begin{figure}
\begin{center}
\begin{tabular}{ccc}
\includegraphics[width=50mm]{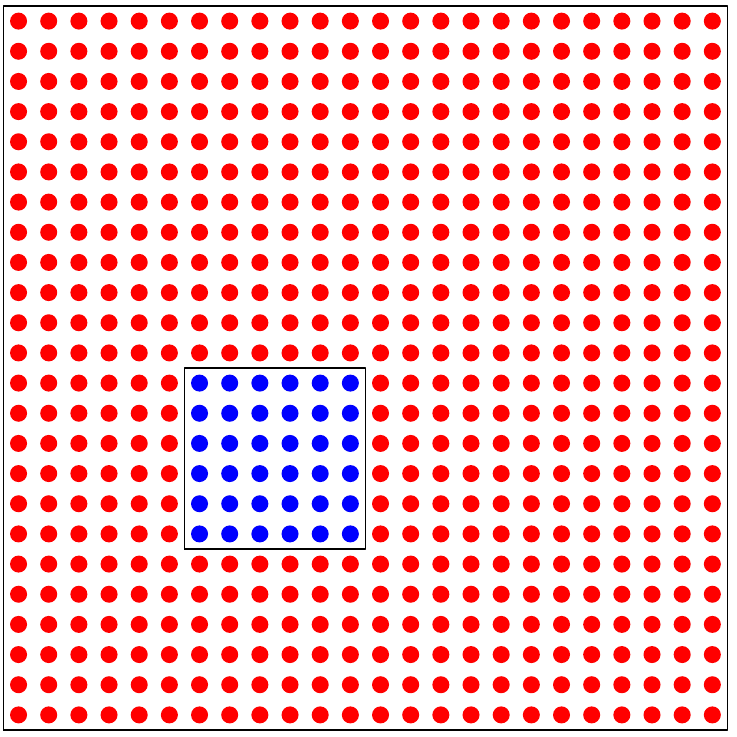} &
\includegraphics[width=50mm]{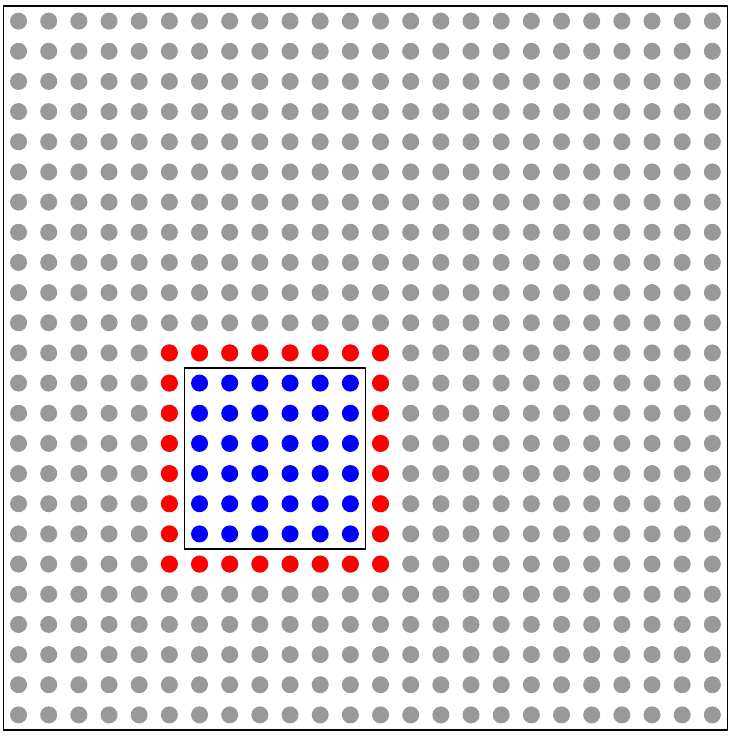} &
\includegraphics[width=50mm]{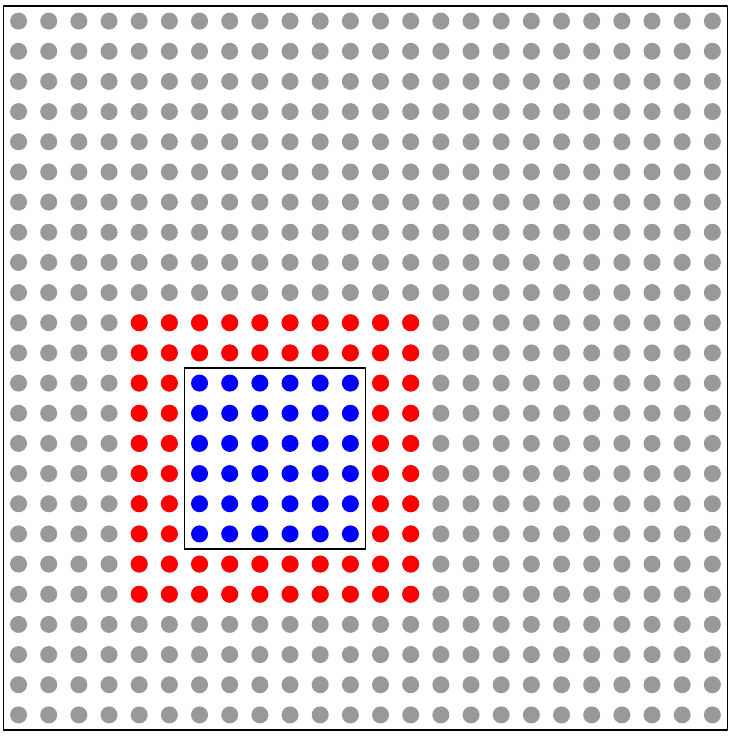} \\
(a) & (b) & (c)
\end{tabular}
\end{center}
\caption{The proxy source compression technique, cf.~Section \ref{s:compression}.
(a) We seek to build a basis for the range of the
matrix $\mtx{G}(\mc{I}_\tau,\mc{I}_\tau^c)$ that maps sources on the red
points ($\mc{I}_{\tau}^{\rm c}$) to targets on the blue points ($\mc{I}_{\tau}$).
This is expensive since $\mtx{G}(\mc{I}_\tau,\mc{I}_\tau^c)$ has $O(N)$ columns.
(b) The proxy source techniques relies on the observation that any
solution to the homogeneous Helmholtz equation on $\Omega_{\tau}$
(the box holding the blue dots) can be approximated to high precision
by placing sources \textit{on the red ``proxy points'' ($\mc{I}_{\tau}^{\rm proxy}$) only}. This means that
we can work with the small matrix $\mtx{G}(\mc{I}_\tau,\mc{I}_\tau^{\rm proxy})$.
(c) Using a thicker ring of proxy sources, higher accuracy can be attained.
}
\label{fig:proxy}
\end{figure}

\begin{table}
\begin{center}
\begin{tabular}{l|r|r|r}
& \multicolumn{3}{c}{Size of $\Omega_{\tau}$} \\
 & $0.25\lambda\times 0.25\lambda$ & $1\lambda \times 1\lambda$ & $4\lambda \times 4\lambda$ \\ \hline
width=1 &  8.7e-05  &    1.6e-04  &    9.6e-04 \\
width=2 &  1.1e-10  &    1.8e-10  &    6.2e-10  \\
width=3 &  5.6e-15  &    5.2e-15  &    4.0e-15
\end{tabular}
\end{center}
\caption{Errors incurred when using the method of proxy sources
to accelerate the compression stage. The target box $\Omega_{\tau}$
holds $20 \times 20$ points. The error reported is the relative
error in maximum norm. To be precise, using the notation in Section
\ref{s:compression}, the error reported is $\|\mtx{B} - \mtx{Q}\mtx{Q}^{*}\mtx{B}\|_{\infty}/
\|\mtx{B}\|_{\infty}$ where $\mtx{B} = \mtx{G}(\mc{I}_\tau,\mc{I}_\tau^c)$ is the
matrix to be approximated, where $\mtx{Q}$ is a matrix whose columns form
an orthornormal basis for the columns of
$\mtx{G}(\mc{I}_\tau,\mc{I}^{\rm proxy}_{\tau})$, and where
$\|\mtx{B}\|_{\infty} = \max|\mtx{B}(i,j)|$. }
\label{table:proxy}
\end{table}

Another advantage of the proxy source technique is that the matrix
$\mtx{G}(\mc{I}_{\tau},\mc{I}_{\tau}^{\rm proxy})$ that needs to get
factorized is \textit{identical} for every node $\tau$ on a given level.
This means that only one box per level needs to actually be processed.

\begin{remark}
A slight disadvantage of the proxy source technique is that it slightly overestimates
the interaction rank for boxes that are located at the boundary of the domain.
To see why, consider a box in the southwest corner of the computational domain.
The optimal set of proxy nodes need only capture a field generated by
sources to the north and to the east. But the proxy sources will completely
surround the box nevertheless, and will force the inclusion of skeleton nodes
along the outer boundaries.
\end{remark}

\begin{remark}
The compression stage is in a sense a ``universal'' computation, since
it depends only on the size of a box, and on the parameter $h\kappa$.
This means that if one is willing to live with mild restrictions on
the grids that can be used, the bases and the skeleton sets could be
precomputed once and for all, and be stored.
\end{remark}

After the relevant factors have been computed on the leaf level, the nodes
on level $L-1$ can be processed, picking the skeleton indices for each box
to be a subset of the skeleton indices of its children. If $\tau$ is now a
parent node with children $\alp$ and $\ba$, this can be accomplished by
computing a row ID of the form
\begin{equation}
  \mtx{G}([\mc{I}_\alp^{\rm skel},\mc{I}_\ba^{\rm skel}],\mc{I}_\tau^c)
  \approx
  \mtx{U}_\tau \mtx{G}(\mc{I}_\tau^{\rm skel}, \mc{I}_\tau^c).
  \label{eq:parentid}
\end{equation}
Again, the method of proxy sources is employed to reduce the cost of
computing this factorization. The sibling-sibling interaction matrices are
also simply given by
\begin{equation}
  \mtx{G}_{\alp,\ba} = \mtx{G}(\mc{I}_\alp^{\rm skel}, \mc{I}_\ba^{\rm  skel})
  \qquad  \text{and} \qquad
  \mtx{G}_{\ba,\alp} = \mtx{G}(\mc{I}_\ba^{\rm skel},  \mc{I}_\alp^{\rm skel}),
    \label{eq:sibint}
\end{equation}
so these dense matrices themselves do not need to be stored. Translation
invariance is again exploited so it suffices to factor just a single box on
this level. The equations \eqref{eq:parentid} and \eqref{eq:sibint} can be
viewed as a recursion relation, which can be applied to move up the tree,
processing one level at a time, employing the method of proxy sources and
translation invariance on each level.

This procedure carries over straightforwardly when higher-order quadrature
corrections are included. However, the required ranks in each ID and the
necessary thickness of the ring of proxy sources for a fixed accuracy
increases. The former effect is illustrated in Figures \ref{fig:4thid} and
\ref{fig:10thid}, which show the skeleton points and ranks of the ID when
the interaction through the kernel \eqref{eq:kernel} with $\ka = 10$ and
$4^{\rm th}$- and $10^{\rm th}$-order quadrature respectively between a box
of 20 by 20 points in a 40 by 40 uniform lattice of points is compressed to
various accuracies.

\begin{figure}
  \centering
  \includegraphics[scale=0.35]{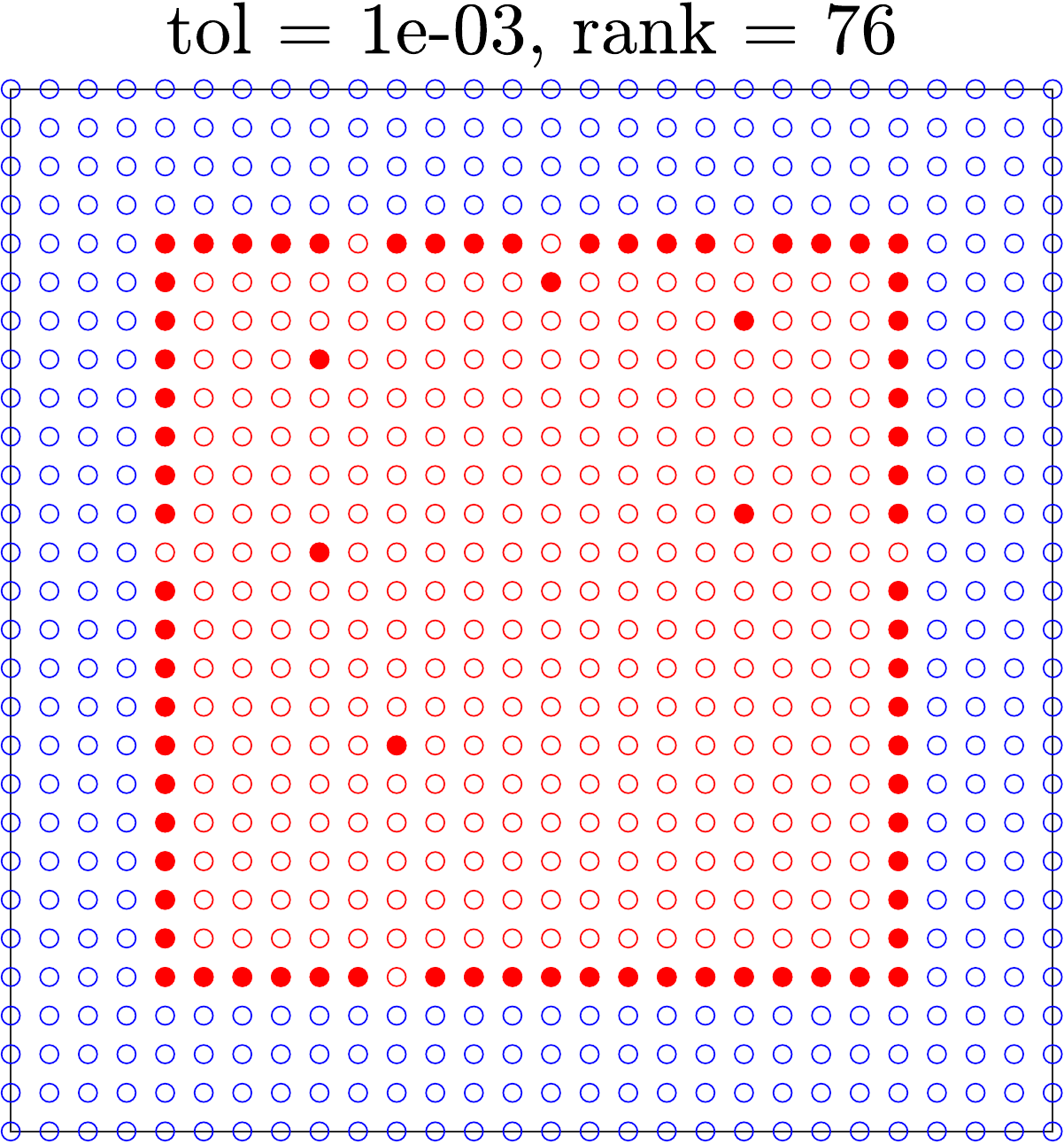} \hspace{1.5em}
  \includegraphics[scale=0.35]{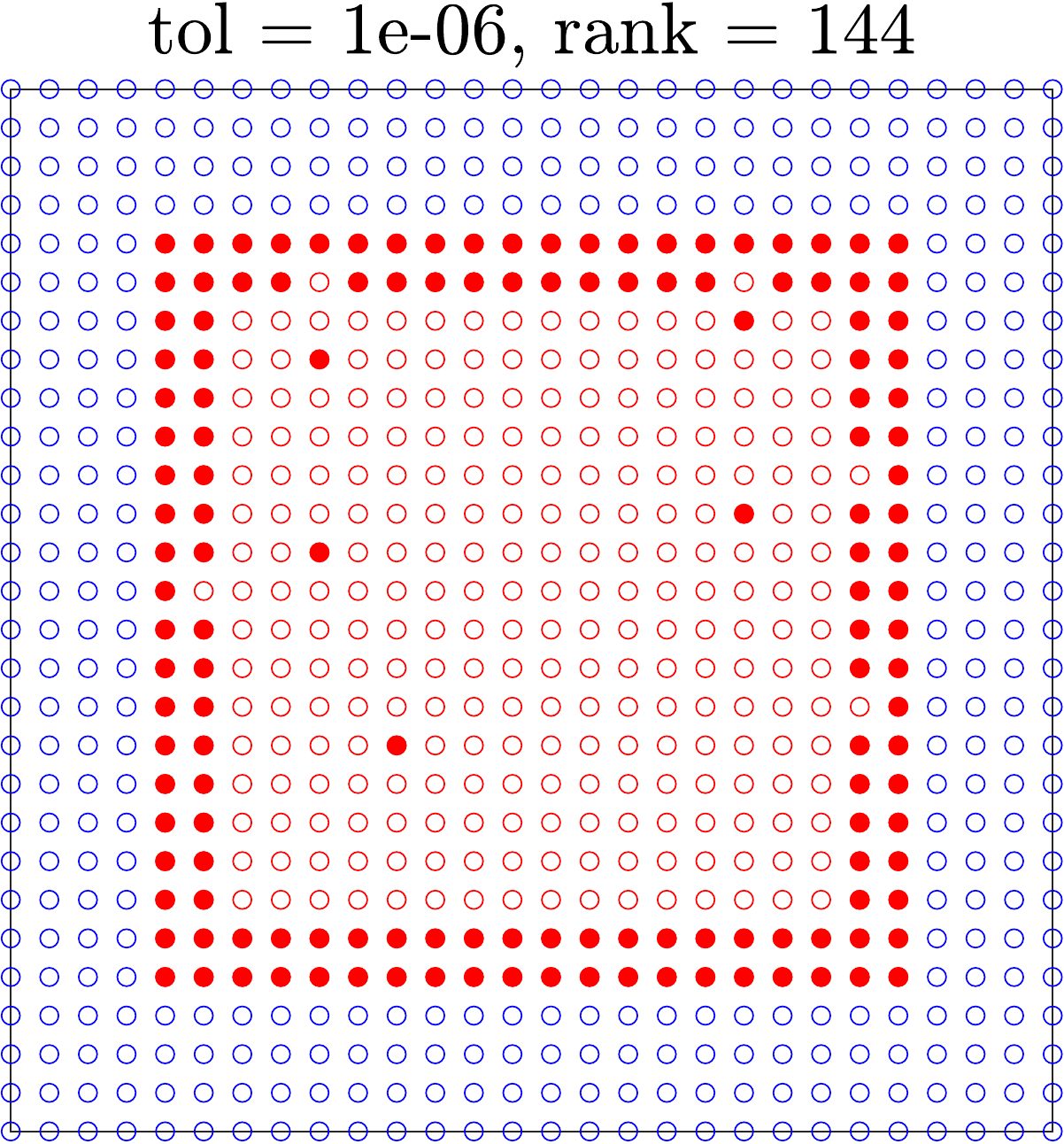} \hspace{1.5em}
  \includegraphics[scale=0.35]{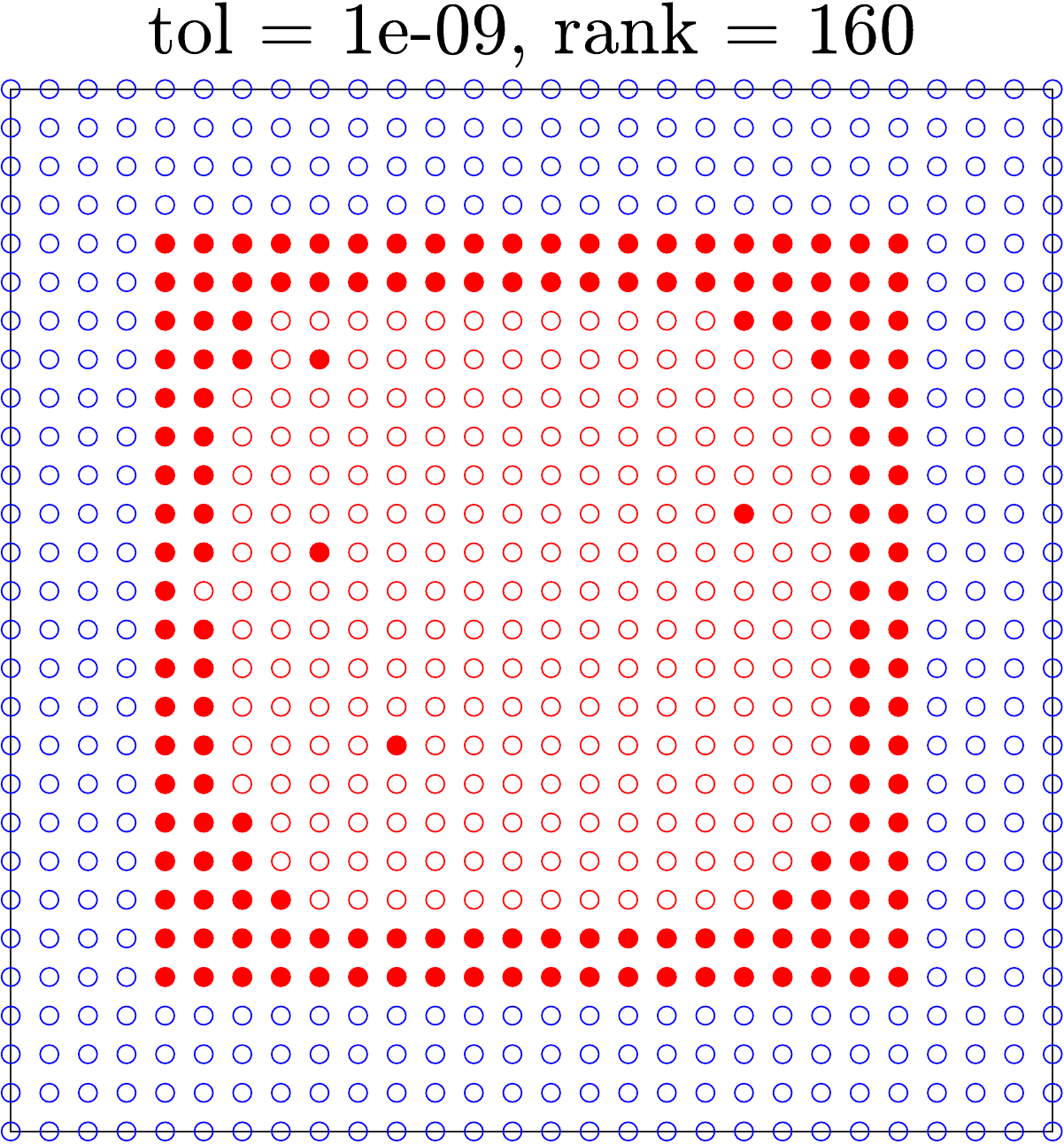}
  \caption{Illustration of ID with $4^{\rm th}$-order corrections described
    in Section \ref{s:compression}. The red circles denote points in the
    box $\tau$ and the blue points denote the points in the proxy ring
    used.  The solid red dots denote the points that correspond to skeleton
    indices, as chosen using CPQR as explained in Section \ref{s:id}. Each
    image corresponds to a specific compression tolerance, which is
    specified in the title, and also reported is the rank (i.e.  the number
  of solid red circles) for that tolerance.}
  \label{fig:4thid}
\end{figure}

\begin{figure}
  \centering
  \includegraphics[scale=0.35]{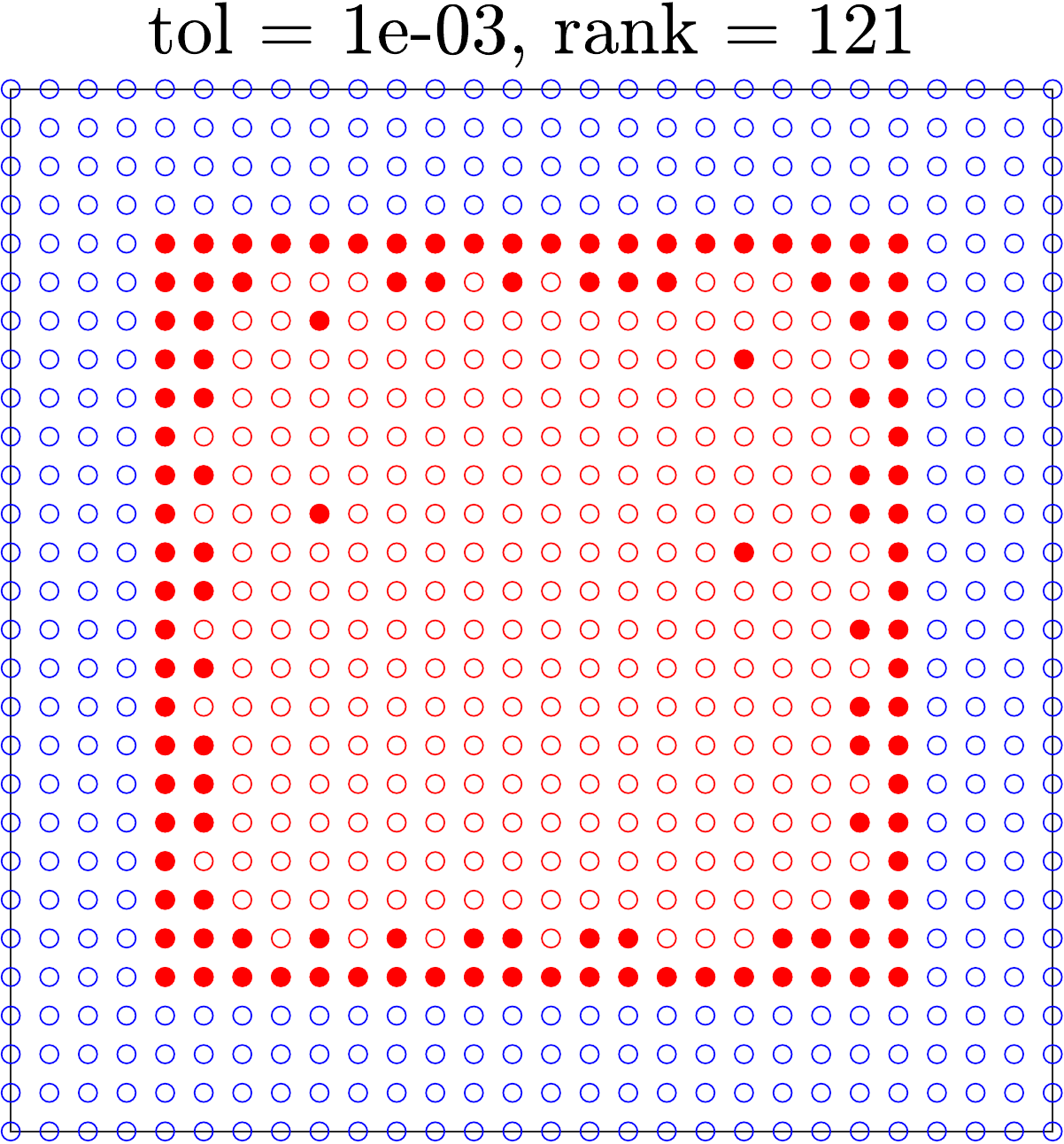} \hspace{1.5em}
  \includegraphics[scale=0.35]{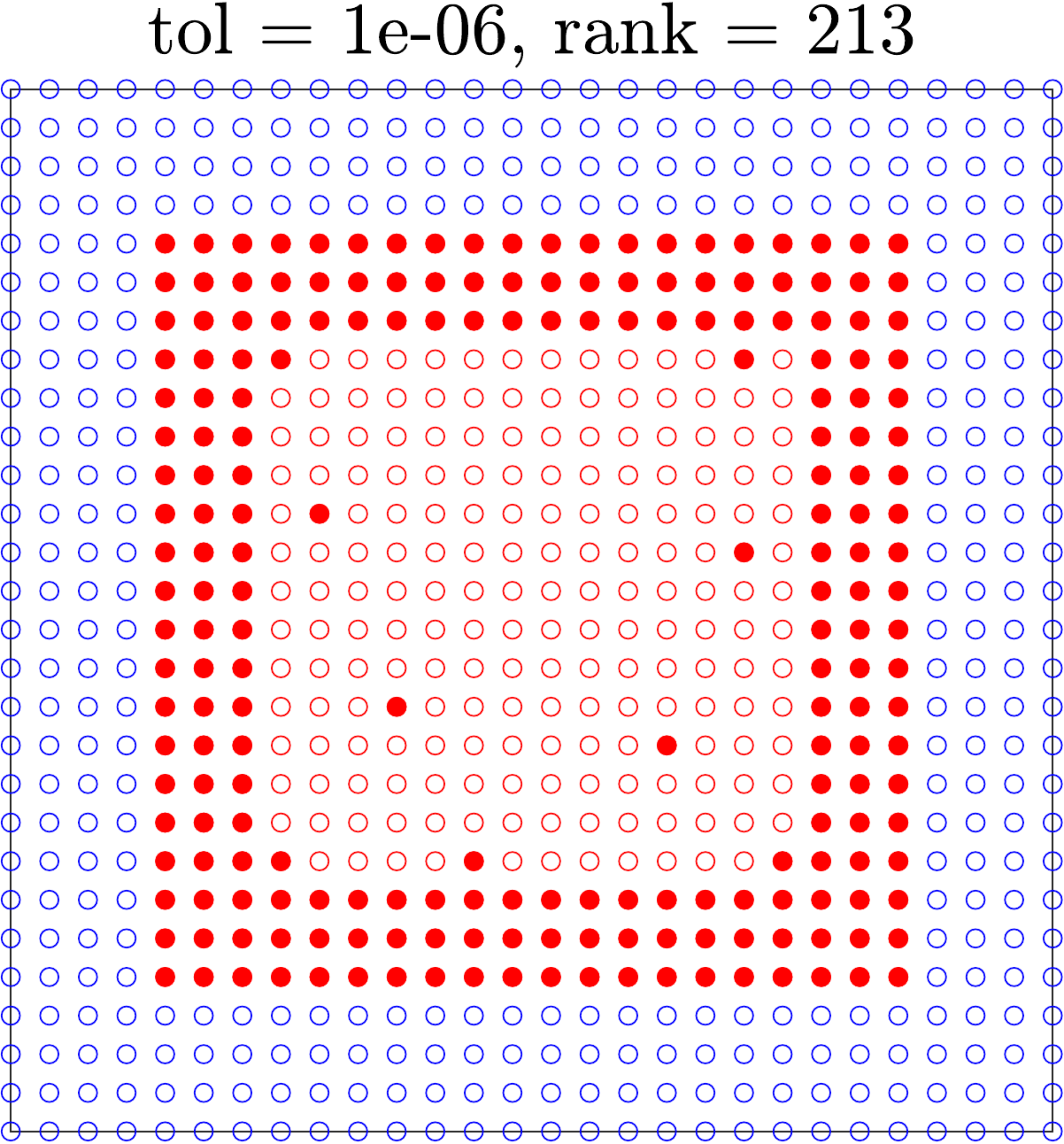} \hspace{1.5em}
  \includegraphics[scale=0.35]{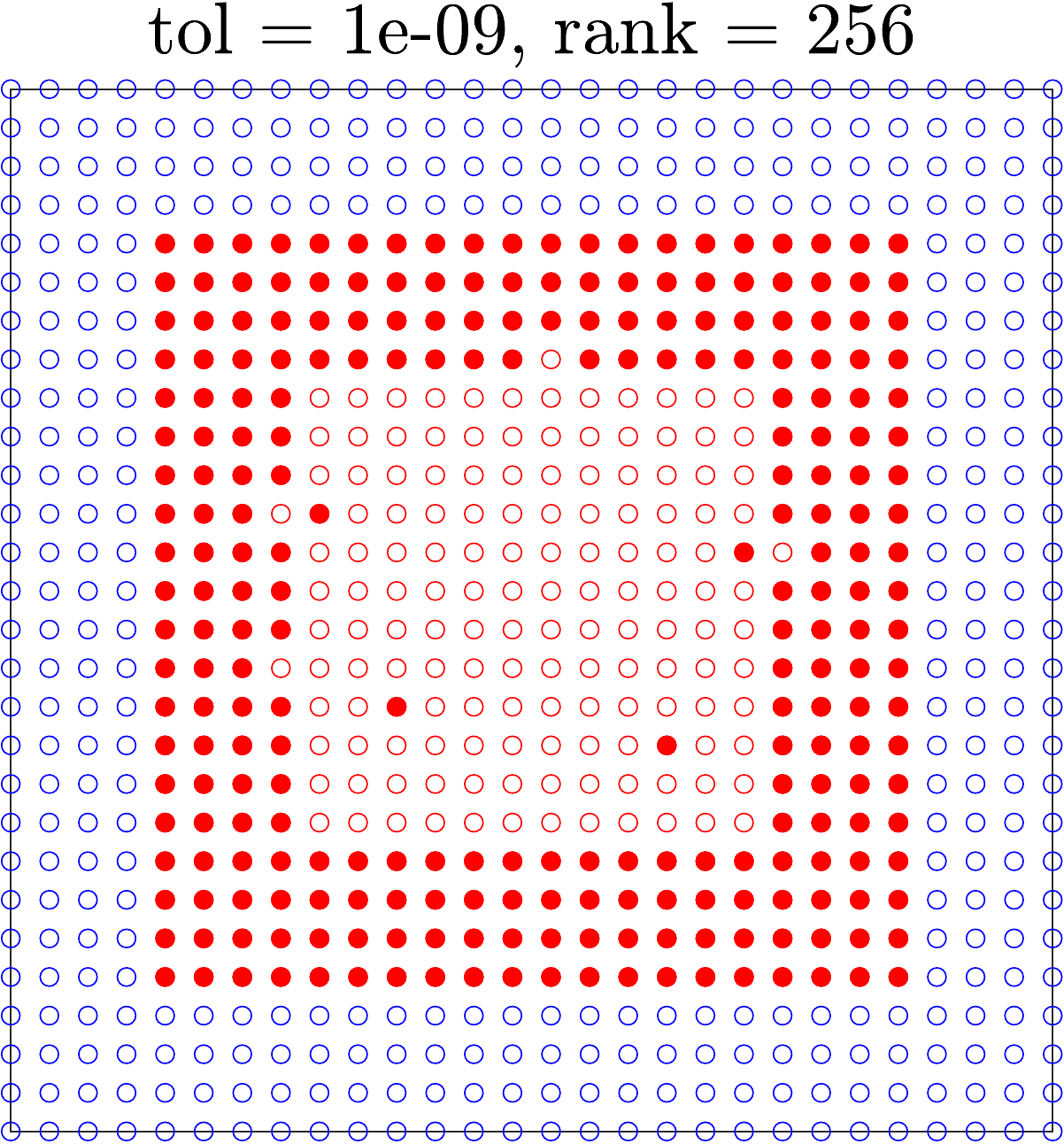}
  \caption{Analogue of Figure \ref{fig:4thid} for the $10^{\rm th}$-order
  quadrature corrections.}
  \label{fig:10thid}
\end{figure}

\begin{remark}
\label{remark:howpicklocaltolerance}
A delicate question in building a rank-structured representation of a matrix
is how to choose the local compression tolerance. Since errors do to some degree
aggregate in the pass from smaller to larger blocks, it is advantageous to
use a smaller (absolute) tolerance for boxes on the finest level, and then to
gradually increase the tolerance parameter in the upwards pass. Deriving rigorous
bounds on the error propagation is difficult, in particular when non-orthonormal
basis matrices are used. Moreover, such bounds tend to be pessimistic since
the worst-case alignment of the all error vectors is statistically extremely unlikely.
In the numerical experiments reported in Section \ref{s:numerics}, we picked a local
compression tolerance by multiplying a given global tolerance parameter $\eps$ by
(an estimate of) the Frobenius norm of the block to be compressed. This simple strategy
automatically enforces more precise approximation of blocks at the finer levels, and
has empirically proven to work well.
\end{remark}

\section{A direct solver based on discrete scattering operators}
\label{s:direct}

We are now ready to describe the inversion procedure for \eqref{eq:linsys}.
In the following, we will assume that an HBS approximation to the matrix
$\mtx{G}$ has been computed using the procedure described in Section
\ref{s:compression} and therefore all necessary matrices, as described in
Remark \ref{rmk:hbsfactors}, have been computed.  In Section
\ref{s:solver}, we give a formal description of our solver.  This is
followed by Section \ref{s:complexity} which gives a brief outline of the
solver's complexity. Section \ref{s:comparison} provides some discussion
contextualizing our work in relation to some existing direct solvers.

\subsection{Description of solver}
\label{s:solver}
For each node $\tau$, we introduce the notation
\begin{equation}
  \vct{q}_\tau = \vct{q}(\mc{I}_\tau), \quad \vct{w}_\tau =
  \mtx{G}(\mc{I}_\tau,\mc{I}_\tau^c) \vct{q}(\mc{I}_\tau^c), \quad
  \vct{f}_\tau = \vct{f}(\mc{I}_\tau).
  \label{eq:notation}
\end{equation}
The vector $\vct{q}_\tau$ represents the restriction of $\vct{q}$ to the
points in $\tau$ and $\vct{w}_\tau$ represents the incoming field on the
points in $\tau$ generated from all of the points outside of $\tau$.

The key observation that enables our direct solver is that to represent the
interactions between nodes, it suffices to work with compressed versions of
these vectors.  To this end, we define an \textit{outgoing expansion}
$\wt{\vct{q}}_{\tau}$ and an \textit{incoming expansion}
$\wt{\vct{w}}_{\tau}$ through the relations
\begin{equation}
  \wt{\vct{q}}_\tau = \wh{\mtx{V}}_\tau^* \vct{q}_\tau, \quad \vct{w}_\tau
  = \wh{\mtx{U}}_\tau \wt{\vct{w}}_\tau,
  \label{eq:expansions}
\end{equation}
respectively, where $\wh{\mtx{U}}_\tau$ and $\wh{\mtx{V}}_\tau$ are the
extended basis matrices defined in Section \ref{s:hbs}. The expansions
$\wt{\vct{q}}_\tau$ and $\wt{\vct{w}}_\tau$ serve as proxies for
$\vct{q}_\tau$ and $\vct{w}_\tau$, and are analogous to the multipole and
local expansions in the fast multipole method \cite{greengard1987fast}. The
key to relating the two expansions is the following lemma, which states
that the incoming expansion of a non-root node can be split into a
sibling-sibling interaction plus a contribution from the incoming field of
the parent:

\begin{lemma}
  Let $\tau$ be a parent node with children $\alp$ and $\ba$. Then,
  with $\mtx{G}_{\alpha,\beta}$ and $\mtx{G}_{\beta,\alpha}$ defined by
  \eqref{eq:sibint}, we have
  \begin{equation}
    \begin{bmatrix}
      \wt{\vct{w}}_\alp \\ \wt{\vct{w}}_\ba
    \end{bmatrix}
    =
    \begin{bmatrix}
      \mtx{0} & \mtx{G}_{\alp,\ba} \\ \mtx{G}_{\ba,\alp} & \mtx{0}
    \end{bmatrix}
    \begin{bmatrix} \wt{\vct{q}}_\alp \\ \wt{\vct{q}}_\ba \end{bmatrix}
    + \mtx{U}_\tau \wt{\vct{w}}_\tau,
    \label{eq:lemma}
  \end{equation}
  whenever $\wh{\mtx{U}}_\alp$ and $\wh{\mtx{U}}_\ba$ have full column rank.
  \label{thm:lemma}
\end{lemma}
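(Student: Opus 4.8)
The plan is to unpack the definitions in \eqref{eq:notation} and \eqref{eq:expansions}, and to bookkeep the contributions to the incoming field $\vct{w}_\alp$ (and symmetrically $\vct{w}_\ba$) according to whether they come from inside the parent box $\tau$ (i.e.\ from the sibling $\ba$) or from outside $\tau$. Concretely, since $\mc{I}_\alp^c = \mc{I}_\ba \cup \mc{I}_\tau^c$, I would split
\[
  \vct{w}_\alp = \mtx{G}(\mc{I}_\alp,\mc{I}_\alp^c)\vct{q}(\mc{I}_\alp^c)
  = \mtx{G}(\mc{I}_\alp,\mc{I}_\ba)\vct{q}_\ba + \mtx{G}(\mc{I}_\alp,\mc{I}_\tau^c)\vct{q}(\mc{I}_\tau^c)
  = \mtx{G}(\mc{I}_\alp,\mc{I}_\ba)\vct{q}_\ba + \vct{w}_\tau(\text{restricted to }\mc{I}_\alp),
\]
and similarly for $\ba$. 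The second term, stacked over $\alp$ and $\ba$, is exactly $\vct{w}_\tau = \mtx{G}(\mc{I}_\tau,\mc{I}_\tau^c)\vct{q}(\mc{I}_\tau^c)$.

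Next I would compress each piece using the HBS structure. For the sibling term, property~(1) of the HBS definition gives $\mtx{G}(\mc{I}_\alp,\mc{I}_\ba) = \wh{\mtx{U}}_\alp \mtx{G}_{\alp,\ba} \wh{\mtx{V}}_\ba^*$, so $\mtx{G}(\mc{I}_\alp,\mc{I}_\ba)\vct{q}_\ba = \wh{\mtx{U}}_\alp \mtx{G}_{\alp,\ba} \wt{\vct{q}}_\ba$ by the definition of the outgoing expansion. For the parent term, the defining relation $\vct{w}_\tau = \wh{\mtx{U}}_\tau \wt{\vct{w}}_\tau$ together with the nestedness property~(2), $\wh{\mtx{U}}_\tau = \operatorname{diag}(\wh{\mtx{U}}_\alp,\wh{\mtx{U}}_\ba)\,\mtx{U}_\tau$, shows that the restriction of $\vct{w}_\tau$ to $\mc{I}_\alp$ equals $\wh{\mtx{U}}_\alp$ times the top block of $\mtx{U}_\tau\wt{\vct{w}}_\tau$. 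Assembling the two contributions, the block of $\vct{w}_\alp$ factors as $\wh{\mtx{U}}_\alp$ times $\bigl(\mtx{G}_{\alp,\ba}\wt{\vct{q}}_\ba + [\mtx{U}_\tau\wt{\vct{w}}_\tau]_{\text{top}}\bigr)$; doing the same for $\ba$ and stacking yields $\bigl[\begin{smallmatrix}\vct{w}_\alp\\\vct{w}_\ba\end{smallmatrix}\bigr] = \operatorname{diag}(\wh{\mtx{U}}_\alp,\wh{\mtx{U}}_\ba)\bigl(\bigl[\begin{smallmatrix}\mtx{0}&\mtx{G}_{\alp,\ba}\\\mtx{G}_{\ba,\alp}&\mtx{0}\end{smallmatrix}\bigr]\bigl[\begin{smallmatrix}\wt{\vct{q}}_\alp\\\wt{\vct{q}}_\ba\end{smallmatrix}\bigr] + \mtx{U}_\tau\wt{\vct{w}}_\tau\bigr)$.

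Finally I would strip off the block-diagonal factor. By the definition \eqref{eq:expansions}, $\vct{w}_\alp = \wh{\mtx{U}}_\alp\wt{\vct{w}}_\alp$ and $\vct{w}_\ba = \wh{\mtx{U}}_\ba\wt{\vct{w}}_\ba$, so the left side above is also $\operatorname{diag}(\wh{\mtx{U}}_\alp,\wh{\mtx{U}}_\ba)\bigl[\begin{smallmatrix}\wt{\vct{w}}_\alp\\\wt{\vct{w}}_\ba\end{smallmatrix}\bigr]$. Cancelling the common block-diagonal factor — which is legitimate precisely because it has full column rank, and this is where the hypothesis that $\wh{\mtx{U}}_\alp$ and $\wh{\mtx{U}}_\ba$ have full column rank enters (a matrix with full column rank is left-invertible, so $\mtx{M}\vct{x}=\mtx{M}\vct{y}\Rightarrow\vct{x}=\vct{y}$) — gives \eqref{eq:lemma}.

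The main obstacle is not any single deep step but getting the index bookkeeping exactly right: one must be careful that the ordering of indices within $\mc{I}_\tau = [\mc{I}_\alp,\mc{I}_\ba]$ is consistent so that ``restriction of $\vct{w}_\tau$ to $\mc{I}_\alp$'' really is the top block, and that the off-diagonal sibling matrix is genuinely block-anti-diagonal (the $\mtx{G}(\mc{I}_\alp,\mc{I}_\alp)$ and $\mtx{G}(\mc{I}_\ba,\mc{I}_\ba)$ self-interactions are \emph{not} part of $\vct{w}_\alp$ or $\vct{w}_\ba$, since those only involve points outside the respective box). The full-column-rank hypothesis is used only in the very last cancellation and requires no more than the elementary left-invertibility fact quoted above.
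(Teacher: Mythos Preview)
Your proposal is correct and follows essentially the same route as the paper's proof: split the incoming field on each child into the sibling contribution and the restriction of the parent's incoming field, compress each piece via the HBS properties (1) and (2), factor out $\operatorname{diag}(\wh{\mtx{U}}_\alp,\wh{\mtx{U}}_\ba)$, and cancel it using the full-column-rank hypothesis. Your write-up is in fact slightly more careful than the paper's about the index bookkeeping and about where exactly the rank hypothesis is invoked.
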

\begin{proof}
  By definition of the extended basis matrices, the full incoming field on
  $\alp$ and $\ba$ can be expressed by
  \begin{align*}
    \begin{bmatrix}
      \vct{w}_\alp \\ \vct{w}_\ba
    \end{bmatrix}
    &=
    \begin{bmatrix}
      \mtx{0} & \mtx{G}(\mc{I}_\alp,\mc{I}_\ba) \\
      \mtx{G}(\mc{I}_\ba,\mc{I}_\alp) & \mtx{0}
    \end{bmatrix}
    \begin{bmatrix}
      \vct{q}_\alp \\ \vct{q}_\ba
    \end{bmatrix}
    + \wh{\mtx{U}}_\tau \vct{w}_\tau \\
    &=
    \begin{bmatrix}
      \wh{\mtx{U}}_\alp & \mtx{0} \\ \mtx{0} & \wh{\mtx{U}}_\ba
    \end{bmatrix}
    \begin{bmatrix}
      \mtx{0} & \mtx{G}_{\alp,\ba} \\
      \mtx{G}_{\ba,\alp} & \mtx{0}
    \end{bmatrix}
    \begin{bmatrix}
      \wh{\mtx{V}}_\alp^* & \mtx{0} \\
      \mtx{0} & \wh{\mtx{V}}_\ba^*
    \end{bmatrix}
    \begin{bmatrix}
      \vct{q}_\alp \\
      \vct{q}_\ba
    \end{bmatrix}
    +
    \begin{bmatrix}
      \wh{\mtx{U}}_\alp & \mtx{0} \\ \mtx{0} & \wh{\mtx{U}}_\ba
    \end{bmatrix}
    \mtx{U}_\tau
    \wt{\vct{w}}_\tau \\
    &= \begin{bmatrix}
      \wh{\mtx{U}}_\alp & \mtx{0} \\ \mtx{0} &
      \wh{\mtx{U}}_\ba
    \end{bmatrix}
      \left(
        \begin{bmatrix}
          \mtx{0} & \mtx{G}_{\alp,\ba} \\ \mtx{G}_{\ba,\alp} & \mtx{0}
        \end{bmatrix}
        \begin{bmatrix}
          \wt{\vct{q}}_\alp \\ \wt{\vct{q}}_\ba
        \end{bmatrix}
        + \mtx{U}_\tau \wt{\vct{w}}_\tau
      \right).
  \end{align*}
  The result then follows from the fact that $\wh{\mtx{U}}_\alp$ and
  $\wh{\mtx{U}}_\ba$ have full column rank and by \eqref{eq:expansions}
 \[
    \begin{bmatrix}
      \wt{\vct{w}}_\alp \\ \wt{\vct{w}}_\ba
    \end{bmatrix}
    =
    \begin{bmatrix}
      \wh{\mtx{U}}_\alp^{\dagger} & \mtx{0} \\ \mtx{0} &
      \wh{\mtx{U}}_\ba^\dagger
    \end{bmatrix}
    \begin{bmatrix} \vct{w}_\alp \\ \vct{w}_\ba \end{bmatrix}.
 \]
\end{proof}

We are now in position to derive the solver, which follows the pattern of
the solver in \cite[Ch.~18]{martinsson2019book}, but with some
accelerations that are enabled by the particular nature of the equilibrium
equation \eqref{eq:linsys}.

The first stage consist of a sweep upwards through the hierarchical
tree, going from smaller to larger boxes. To start, let $\tau$ denote
a leaf box. Recalling the definition of local potentials
in \eqref{eq:notation}, and restricting the equilibrium equation \eqref{eq:linsys}
to $\tau$, we obtain the local equilibrium equation
\begin{equation}
  (\mtx{I} + \mtx{B}_\tau \mtx{G}_\tau) \vct{q}_\tau + \mtx{B}_\tau \vct{w}_\tau =
  \vct{f}_\tau,
  \label{eq:equi}
\end{equation}
where $\mtx{B}_\tau = \mtx{B}(\mc{I}_\tau,\mc{I}_\tau)$ and $\mtx{G}_\tau =
\mtx{G}(\mc{I}_\tau,\mc{I}_\tau)$. Setting
\begin{equation}
\mtx{X}_\tau = (\mtx{I} +
\mtx{B}_\tau \mtx{G}_\tau)^{-1}
\label{eq:localsol_leaf}
\end{equation}
and then multiplying both sides of
\eqref{eq:equi} by $\mtx{V}_\tau^* \mtx{X}_\tau$ produces
\begin{equation}
  \mtx{V}_\tau^* \vct{q}_\tau + \mtx{V}_\tau^* \mtx{X}_\tau
  \mtx{B}_\tau\vct{w}_\tau = \mtx{V}_\tau^* \mtx{X}_\tau \vct{f}_\tau.
  \label{eq:comp}
\end{equation}
Using the fact that by definition $\wh{\mtx{V}}_\tau = \mtx{V}_\tau$ for
leaves, \eqref{eq:comp} can be rewritten using the notation in
\eqref{eq:expansions} as
\begin{equation}
  \wt{\vct{q}}_\tau + \mtx{S}_\tau \wt{\vct{w}}_\tau = \wt{\vct{r}}_\tau,
  \label{eq:leaf}
\end{equation}
where
\begin{equation}
\mtx{S}_\tau = \mtx{V}_\tau^* \mtx{X}_\tau \mtx{B}_\tau \mtx{U}_\tau
\label{eq:smat_leaf}
\end{equation}
and
\begin{equation}
\wt{\vct{r}}_\tau = \mtx{V}_\tau^* \mtx{X}_\tau \vct{f}_\tau.
\label{eq:load_leaf}
\end{equation}
The matrix $\mtx{S}_\tau$ can be viewed as a discrete
analogue of a scattering matrix, which is a mathematical operator that
encodes the relationship between the incoming field and the effective
charge on a box in the absence of a source \cite{bremer2015high}.

We next move one level up the tree and consider a parent node $\tau$ with
leaf children $\alp$ and $\ba$. Combining Lemma \ref{thm:lemma} and the
fact that both $\alp$ and $\ba$ satisfy \eqref{eq:leaf}, we have the
equilibrium equation
\begin{equation}
  \begin{bmatrix}
    \wt{\vct{q}}_\alp \\ \wt{\vct{q}}_\ba
  \end{bmatrix}
  +
  \begin{bmatrix}
    \mtx{S}_\alp & \mtx{0} \\
    \mtx{0} & \mtx{S}_\ba
  \end{bmatrix}
  \left(
    \begin{bmatrix}
      \mtx{0} & \mtx{G}_{\alp,\ba} \\
      \mtx{G}_{\ba,\alp} & \mtx{0}
    \end{bmatrix}
    \begin{bmatrix}
      \wt{\vct{q}}_\alp \\ \wt{\vct{q}}_\ba
    \end{bmatrix}
    +
    \mtx{U}_\tau \wt{\vct{w}}_\tau
  \right)
  =
  \begin{bmatrix}
    \wt{\vct{r}}_\alp \\ \wt{\vct{r}}_\ba
  \end{bmatrix}.
  \label{eq:parent}
\end{equation}
Setting
\begin{equation}
  \mtx{X}_\tau
  =
  \begin{bmatrix}
    \mtx{I} & \mtx{S}_{\alp} \mtx{G}_{\alp,\ba} \\
    \mtx{S}_\ba \mtx{G}_{\ba,\alp} & \mtx{I}
  \end{bmatrix}^{-1}
  \label{eq:localsol}
\end{equation}
and then multiplying both sides of \eqref{eq:parent} by $\mtx{V}_\tau^*
\mtx{X}_\tau$ produces
\[
  \mtx{V}_\tau^*
  \begin{bmatrix}
    \wt{\vct{q}}_\alp \\ \wt{\vct{q}}_\ba
  \end{bmatrix}
  +
  \mtx{V}_\tau^* \mtx{X}_\tau
  \begin{bmatrix}
    \mtx{S}_\alp & \mtx{0} \\
    \mtx{0} & \mtx{S}_\ba
  \end{bmatrix}
  \mtx{U}_\tau
  \wt{\vct{w}}_\tau = \mtx{V}_\tau^* \mtx{X}_\tau
  \begin{bmatrix}
    \wt{\vct{r}}_\alp \\ \wt{\vct{r}}_\ba
  \end{bmatrix}.
\]
Using \eqref{eq:expansions} and defining
\begin{equation}
  \mtx{S}_\tau =  \mtx{V}_\tau^* \mtx{X}_\tau
  \begin{bmatrix}
    \mtx{S}_\alp & \mtx{0} \\
    \mtx{0} & \mtx{S}_\ba
  \end{bmatrix}
  \mtx{U}_\tau,
  \label{eq:smat}
\end{equation}
and
\begin{equation}
  \quad \wt{\vct{r}}_\tau = \mtx{V}_\tau^* \mtx{X}_\tau
  \begin{bmatrix}
    \wt{\vct{r}}_\alp \\ \wt{\vct{r}}_\ba
  \end{bmatrix}
  \label{eq:load}
\end{equation}
gives us exactly \eqref{eq:leaf} again, where now \eqref{eq:smat} allows us
to compute the discrete scattering matrix for $\tau$ using the scattering
matrices for its children, $\alp$ and $\ba$. This allows us to recursively
traverse up the tree, obtaining a relation identical to \eqref{eq:leaf} at
each level until we get to the root.

Now, let $\tau$ be the root and let $\alp$ and $\ba$ be its children. Since
there is no incoming field on $\tau$, the only incoming field on $\alp$ is
due to charges in $\ba$ and vice versa. Therefore, the equation analogous
to \eqref{eq:parent} simplifies in this case to
\begin{equation}
  \begin{bmatrix}
    \mtx{I} & \mtx{S}_\alp \mtx{G}_{\alp,\ba} \\
    \mtx{S}_\ba \mtx{G}_{\ba,\alp} & \mtx{I}
  \end{bmatrix}
  \begin{bmatrix}
    \wt{\vct{q}}_\alp \\ \wt{\vct{q}}_\ba
  \end{bmatrix}
  =
  \begin{bmatrix}
    \wt{\vct{r}}_\alp \\ \wt{\vct{r}}_\ba
  \end{bmatrix},
  \label{eq:root}
\end{equation}
which can be inverted directly. Then, the incoming fields on $\alp$ and
$\ba$ can be computed by the matrix-vector product
\[
  \begin{bmatrix}
    \wt{\vct{w}}_\alp \\ \wt{\vct{w}}_\ba
  \end{bmatrix}
  =
  \begin{bmatrix}
    \mtx{0} & \mtx{G}_{\alp,\ba} \\
    \mtx{G}_{\ba,\alp} & \mtx{0}
  \end{bmatrix}
  \begin{bmatrix}
    \wt{\vct{q}}_\alp \\ \wt{\vct{q}}_\ba
  \end{bmatrix}.
\]
The incoming and outgoing expansions of the nodes on other levels can be
computed using a downward sweep, as follows. Let $\tau$ be a node, such
that its parent has been processed and so the incoming expansion
$\wt{\vct{w}}_\tau$ has been computed. If $\alp$ and $\ba$ are children of
$\tau$, then using \eqref{eq:parent} and \eqref{eq:localsol} we can obtain
the equation for the outgoing expansions on $\alp$ and $\ba$
\[
  \begin{bmatrix}
    \wt{\vct{q}}_\alp \\ \wt{\vct{q}}_\ba
  \end{bmatrix}
  =
  \mtx{X}_\tau
  \left(
  \begin{bmatrix}
    \wt{\vct{r}}_\alp \\ \wt{\vct{r}}_\ba
  \end{bmatrix}
  -
  \begin{bmatrix}
    \mtx{S}_\alp & \mtx{0} \\
    \mtx{0} & \mtx{S}_\ba
  \end{bmatrix}
  \mtx{U}_\tau \wt{\vct{w}}_\tau
  \right).
\]
The incoming expansions $\wt{\vct{w}}_\alp$ and $\wt{\vct{w}}_\ba$ can then
be computed using \eqref{eq:lemma}. We can proceed to compute the
incoming and outgoing expansions for the children of $\alp$ and $\ba$ and
so on, all the way down the tree.

After the incoming expansion for a leaf node $\tau$ is determined,
\eqref{eq:equi} and \eqref{eq:expansions} are used to determine the
solution of the original system through the equation
\[
  \vct{q}_\tau = \mtx{X}_\tau \left(\vct{f}_\tau - \mtx{B}_\tau \mtx{U}_\tau
  \wt{\vct{w}}_\tau\right).
\]

It is customary to split the solution process into a \textit{build stage}
in which the matrices that define the solution operator are all
constructed, and a \textit{solve stage} in which the computed approximate
solution operator is applied to a given incoming field. In our setting, the
build stage consists of a single pass up the tree in which the local
solution operator $\mtx{X}_\tau$ and the local scattering matrix
$\mtx{S}_\tau$ is computed for each node $\tau$. This is illustrated in
Algorithm \ref{alg:build}, which computes the local solution operators and
scattering matrices on all nodes, first for the leaves using
\eqref{eq:localsol_leaf} and \eqref{eq:smat_leaf}, and then for all other
nodes in an upward pass using \eqref{eq:localsol} and \eqref{eq:smat}.
Then, given a right-hand side vector, the solve stage consists of a pass up
the tree, where the $\wt{\vct{r}}_\tau$ are computed using
\eqref{eq:load_leaf} and \eqref{eq:load}, and then a downward pass where
the incoming expansions $\wt{\vct{w}}_\tau$ are computed. This procedure is
summarized in Algorithm \ref{alg:apply}. In an environment where an inverse
needs to be applied to multiple different vectors, such as in
preconditioning or when one is interested in evaluating the scattered field
for multiple incident fields, the build stage which is the dominant cost
needs to be executed just once.

\begin{remark}
A dominant cost of the build stage is the evaluation of the matrices
$\mtx{X}_{\tau}$, as defined by \eqref{eq:localsol}. This computation
can be accelerated by exploiting the presence of the two identity matrices
on the diagonal and the fact that the matrices $\mtx{G}_{\alp,\ba}$ and
$\mtx{G}_{\ba,\alp}$ tend to be substantially rank deficient. A heuristic
argument for this rank deficiency is that the matrices $\mtx{G}_{\alp,\ba}$
and $\mtx{G}_{\ba,\alp}$ encode the interactions between the skeleton
indices of two adjacent boxes. While these points typically live on the
entire perimeter of each box (see Figures \ref{fig:4thid} and
\ref{fig:10thid}), the interactions can be accurately represented using
mostly points on the single shared side. Thus, we expect the ranks of the
off-diagonal blocks to be a little over 25\% of their sizes, which can be
leveraged for more efficient inversion.

To this end, suppose we have low rank factorizations $\mtx{G}_{\alp,\ba} =
\mtx{L}_{\alp,\ba} \mtx{R}_{\alp,\ba}^*$ and $\mtx{G}_{\ba,\alp} =
\mtx{L}_{\ba,\alp} \mtx{R}_{\ba,\alp}^*$, which can be computed once per
level during the HBS compression at a modest cost. We can apply the
Woodbury formula to \eqref{eq:localsol} to get the equivalent formula for
the local solution operator of a non-leaf node
\begin{equation}
  \mtx{X}_\tau = \mtx{I}
  -
  \begin{bmatrix}
    \mtx{S}_\alp \mtx{L}_{\alp,\ba} & \mtx{0} \\ \mtx{0} & \mtx{S}_\ba
    \mtx{L}_{\ba,\alp}
  \end{bmatrix}
  \begin{bmatrix}
    \mtx{I} & \mtx{R}_{\ba,\alp}^* \mtx{L}_{\ba,\alp} \\
    \mtx{R}_{\alp,\ba}^* \mtx{L}_{\alp,\ba} & \mtx{I}
  \end{bmatrix}^{-1}
  \begin{bmatrix}
    \mtx{0} & \mtx{R}_{\alp,\ba}^* \\
    \mtx{R}_{\ba,\alp}^* & \mtx{0}
  \end{bmatrix}.
  \label{eq:blockinv}
\end{equation}
Note that the matrix that needs to be inverted in \eqref{eq:blockinv} is
substantially smaller than that in \eqref{eq:localsol}.
\label{rmk:woodbury}
\end{remark}

\begin{algorithm}
  \caption{Build inverse as described in Section \ref{s:solver}}
  \textbf{Input}: HBS compression of coefficient matrix specified by
  $(\mtx{U}_\tau, \mtx{V}_\tau)$ for all non-root nodes, $(\mtx{B}_\tau,
  \mtx{G}_\tau$) for all leaves, and $(\mtx{G}_{\alp,\ba},
  \mtx{G}_{\ba,\alp})$ for all parents with children $\alp$ and
  $\ba$ \\
    \textbf{Output}: Inverse of HBS matrix specified by $\mtx{X}_\tau$ for
    all nodes and $\mtx{S}_\tau$ for all non-root nodes \\[-.5em]
    \hphantom\,\hrulefill
  \begin{algorithmic}[1]
    \ForAll {$\tau$ a leaf}
      \State $\mtx{X}_\tau = (\mtx{I} + \mtx{B}_\tau \mtx{G}_\tau)^{-1}$
      \State $\mtx{S}_\tau = \mtx{V}_\tau^* \mtx{X}_\tau \mtx{B}_\tau \mtx{U}_\tau$
    \EndFor
    \For{$\ell$ = $L-1$, $L-3$, \dots, 0}
      \ForAll {$\tau$ on level $\ell$}
        \State $[\alp,\ba] \leftarrow $ children of $\tau$
        \State $\mtx{X}_\tau = \begin{bmatrix} \mtx{I} & \mtx{S}_\alp
        \mtx{G}_{\alp,\ba} \\ \mtx{S}_{\ba} \mtx{G}_{\ba,\alp} &
      \mtx{I}\end{bmatrix}^{-1}$
      \If {$\ell \neq  0$}
      \State $\mtx{S}_\tau = \mtx{V}_\tau^* \mtx{X}_\tau \begin{bmatrix}
        \mtx{S}_\alp &  \mtx{0} \\ \mtx{0} & \mtx{S}_\ba \end{bmatrix}
        \mtx{U}_\tau$ \EndIf
      \EndFor
    \EndFor
  \end{algorithmic}
  \label{alg:build}
\end{algorithm}

\begin{algorithm}
  \caption{Apply inverse to right-hand side $\vct{f}$ as described in
  Section \ref{s:solver}}
  \textbf{Input}: HBS matrix, its inverse as in Algorithm \ref{alg:build}, and right-hand
  side $\vct{f}$ \\
  \textbf{Output}: Solution vector $\vct{q}$  \\[-.5em]
  \hphantom\,\hrulefill \\[-2em]
  \setlength\columnsep{4pt}
  \begin{multicols}{2}
    \begin{algorithmic}[1]
    \vspace{.2em}
    \State // Upwards pass
    \ForAll {$\tau$ a leaf}
    \State $\vct{r}_\tau = \mtx{X}_\tau \vct{f}_\tau$
    \State $\wt{\vct{r}}_\tau = \mtx{V}_\tau^* \vct{r}_\tau$
    \EndFor
    \vspace{.1em}
    \For{$\ell$ = $L-1$, $L-3$, \dots, 1}
      \ForAll {$\tau$ on level $\ell$}
        \State $[\alp,\ba] \leftarrow $ children of $\tau$
        \State $\wt{\vct{r}}_\tau = \mtx{V}_\tau^* \mtx{X}_\tau
        \begin{bmatrix} \wt{\vct{r}}_\alp \\ \wt{\vct{r}}_\ba
        \end{bmatrix}$
      \EndFor
    \EndFor
    \vspace{.5em}
    \State // Top-level solve
    \State $\tau \leftarrow $ root
    \State $[\alp,\ba] \leftarrow $ children of $\tau$ \vspace{.2em}
    \State $
  \begin{bmatrix}
    \wt{\vct{w}}_\alp \\ \wt{\vct{w}}_\ba
  \end{bmatrix}
  =
  \begin{bmatrix}
    \mtx{0} & \mtx{G}_{\alp,\ba} \\
    \mtx{G}_{\ba,\alp} & \mtx{0}
  \end{bmatrix} \mtx{X}_\tau \begin{bmatrix} \wt{\vct{r}}_\alp \\
  \wt{\vct{r}}_\ba \end{bmatrix}$
    \vspace{1em} \columnbreak
    \State // Downwards pass
    \For{$\ell = 2, 3,\hdots, L-1$}
      \ForAll {$\tau$ on level $\ell$}
        \State $[\alp,\ba] \leftarrow $ children of $\tau$
        \vspace{.15em}
        \State $\begin{bmatrix} \wt{\vct{q}}_\alp \\ \wt{\vct{q}}_\ba
        \end{bmatrix} = \mtx{X}_\tau \left( \begin{bmatrix}
        \wt{\vct{r}}_\alp \\ \wt{\vct{r}}_\ba \end{bmatrix} -
        \begin{bmatrix} \mtx{S}_\alp & \mtx{0} \\ \mtx{0} & \mtx{S}_\ba
      \end{bmatrix} \mtx{U}_\tau \wt{\vct{w}}_\tau \right)$
        \vspace{.15em}
        \State $\begin{bmatrix} \wt{\vct{w}}_\alp \\ \wt{\vct{w}}_\ba
        \end{bmatrix} =  \begin{bmatrix} \mtx{0} & \mtx{G}_{\alp,\ba} \\
        \mtx{G}_{\ba,\alp} & \mtx{0} \end{bmatrix}  \begin{bmatrix}
        \wt{\vct{q}}_\alp \\ \wt{\vct{q}}_\ba \end{bmatrix} + \mtx{U}_\tau
      \wt{\vct{w}}_\tau$
        \EndFor
    \EndFor
    \vspace{.5em}
    \State // Form solution vector
    \ForAll {$\tau$ a leaf}
    \State $\vct{q}_\tau = \vct{r}_\tau - \mtx{X}_\tau \mtx{B}_\tau
    \mtx{U}_\tau \wt{\vct{w}}_\tau$
    \EndFor
  \end{algorithmic}
  \end{multicols}
  \label{alg:apply}
\end{algorithm}

\subsection{Complexity estimate}
\label{s:complexity}

The computational complexity of our solver depends on the numerical ranks
of the off-diagonal blocks in the HBS format.
For a 2D volume kernel like \eqref{eq:kernel}, the interaction rank of a
box grows like the square root of the number of points it contains when the
wavenumber and compression tolerance are fixed \cite[Sec.~4]{ho2012fast}.
Since the HBS compression and inversion stages use cubic dense linear
algebra, the cost of each is dominated by the cost of processing the root
level where the rank of interaction grows as $\mc{O}(N^{1/2})$, and thus
both stages require $\mc{O}(N^{3/2})$ operations.
Applying the inverse to a vector requires an operation that grows
quadratically with the rank of each box, so each level requires $\mc{O}(N)$
operations, for a total of $\mc{O}(N \log N)$ operations. As for memory,
besides the HBS factors the only matrices that need be stored for the
inverse are the local solution operators $\mtx{X}_\tau$, and so the cost of
storage is still $\mc{O}(N)$. For a more detailed analysis of the costs of
solvers for HBS matrices, we refer the reader to \cite[Sec.~4]{ho2012fast},
which applies in our setting as well.

\begin{remark}[Complexity for a fixed PDE]
In the case where $\ka$ is kept fixed as $N$ is increased,
it should be possible to reduce the $\mc{O}(N^{3/2})$ cost of
the inversion stage to $\mc{O}(N)$
using techniques such as HIF \cite{ho2016hierarchical},
nested trees \cite{corona2015direct}, or
strong recursive skeletonization \cite{2017_ho_ying_strong_RS}.
This is the subject of ongoing work.
\label{rmk:fast}
\end{remark}

\begin{remark}[Complexity for a fixed number of points per wavelength]
In the case where $\ka$ is increased as $N$ grows to keep a fixed number
of discretization points per wavelength (so that $\ka \sim N^{1/2}$),
the overall complexity of the solver is $\mc{O}(\ka^3)$. In this case,
the acceleration techniques referenced in Remark \ref{rmk:fast} do not
apply, and we are not aware of any direct solvers that scale better than
$\mc{O}(\ka^3)$ in the general case.
\end{remark}

\subsection{Comparison with existing solvers}
\label{s:comparison}
The connection between the procedure described in Section \ref{s:solver}
and some existing work on HBS-based direct solvers can be elucidated through
the use of the sparse matrix embeddings introduced in \cite{ho2012fast}.
For the equation \eqref{eq:linsys} and an HBS tree with only two levels, the
corresponding matrix embedding is given by
\begin{equation}
\begin{bmatrix}
  (\mtx{I}+\mtx{B}_{\alp} \mtx{G}_{\alp}) &  \mtx{0} &  \mtx{B}_\alp
  \mtx{U}_\alp & \mtx{0} & \mtx{0} & \mtx{0} \\
  \mtx{0} & (\mtx{I} + \mtx{B}_\ba \mtx{G}_{\ba}) & \mtx{0} &
  \mtx{B}_\ba \mtx{U}_\ba & \mtx{0} & \mtx{0}\\
  -\mtx{V}_\alp^* & \mtx{0} & \mtx{0} & \mtx{0} & \mtx{I} & \mtx{0}\\
  \mtx{0} & -\mtx{V}_\ba^* & \mtx{0} & \mtx{0} & \mtx{0} & \mtx{I} \\
  \mtx{0} & \mtx{0} & - \mtx{I} & \mtx{0} & \mtx{0} & \mtx{G}_{\alp,\ba} \\
  \mtx{0} & \mtx{0} & \mtx{0} & -\mtx{I} & \mtx{G}_{\ba,\alp} & \mtx{0}
\end{bmatrix}
\begin{bmatrix}
  \vct{q}_\alp \\
  \vct{q}_\ba \\
  \wt{\vct{w}}_\alp \\
  \wt{\vct{w}}_\ba \\
  \wt{\vct{q}}_\alp \\
  \wt{\vct{q}}_\ba \\
\end{bmatrix}
=
\begin{bmatrix}
  \vct{f}_\alp \\
  \vct{f}_\ba \\
  \vct{0} \\
  \vct{0} \\
  \vct{0} \\
  \vct{0} \\
\end{bmatrix}.
\label{eq:sparse}
\end{equation}
Upon eliminating $\vct{q}_\alp$ and $\vct{q}_\ba$ in the equations for
$\wt{\vct{w}}_\alp$ and $\wt{\vct{w}}_\ba$, the system
\begin{equation}
  \begin{bmatrix}
    \mtx{S}_\alp & \mtx{0} & \mtx{I} & \mtx{0} \\
    \mtx{0} & \mtx{S}_\ba  & \mtx{0} & \mtx{I} \\
    -\mtx{I} & \mtx{0} & \mtx{0} & \mtx{G}_{\alp,\ba} \\
    \mtx{0} & -\mtx{I} & \mtx{G}_{\ba,\alp} & \mtx{0}
  \end{bmatrix}
\begin{bmatrix}
  \wt{\vct{w}}_\alp \\
  \wt{\vct{w}}_\ba \\
  \wt{\vct{q}}_\alp \\
  \wt{\vct{q}}_\ba \\
\end{bmatrix}
=
\begin{bmatrix}
  \wt{\vct{r}}_\alp \\
  \wt{\vct{r}}_\ba \\
  \vct{0} \\
  \vct{0} \\
\end{bmatrix}
\label{eq:sparse2}
\end{equation}
is obtained, where we continue to use the notation of Section
\ref{s:solver}. The classical Woodbury inversion formula for HBS matrices
as discussed in \cite{gillman2012direct} corresponds to using the (1,\,1) and
(2,\,2) blocks in \eqref{eq:sparse2} to eliminate $\wt{\vct{w}}_\alp$ and
$\wt{\vct{w}}_\ba$ to obtain
\begin{equation}
\label{eq:dvorakbad}
\begin{bmatrix}
  \mtx{S}_{\alp}^{-1} & \mtx{G}_{\alp,\ba} \\ \mtx{G}_{\ba,\alp} &
  \mtx{S}_\ba^{-1}
\end{bmatrix}
\begin{bmatrix}
  \wt{\vct{q}}_\alp \\
  \wt{\vct{q}}_\ba \\
\end{bmatrix}
=
\begin{bmatrix}
  \mtx{S}_\alp^{-1} \wt{\vct{r}}_\alp \\
  \mtx{S}_\ba^{-1} \wt{\vct{r}}_\ba \\
\end{bmatrix}.
\end{equation}
This requires inverting both $\mtx{S}_\alp$ and $\mtx{S}_\ba$, which in our
application is often highly ill-conditioned due to $\mtx{B}$ in
\eqref{eq:linsys} containing small diagonal entries. Indeed, for the
problems in Section \ref{s:numerics}, we frequently encounter scattering
matrices with condition numbers far greater than the reciprocal of machine
precision in double-precision arithmetic.

The scheme described in Section \ref{s:solver}, on the other hand, uses the
(3,\,1) and (4,\,2) blocks in \eqref{eq:sparse2} as pivots. This
leads to the mathematically equivalent system
\begin{equation}
\label{eq:dvorak}
\begin{bmatrix}
  \mtx{I} & \mtx{S}_\alp \mtx{G}_{\alp,\ba}  \\
  \mtx{S}_\ba \mtx{G}_{\ba,\alp}  & \mtx{I}
\end{bmatrix}
\begin{bmatrix}
  \wt{\vct{q}}_\alp \\
  \wt{\vct{q}}_\ba \\
\end{bmatrix}
=
\begin{bmatrix}
  \wt{\vct{r}}_\alp \\
  \wt{\vct{r}}_\ba \\
\end{bmatrix}.
\end{equation}
Solving (\ref{eq:dvorak}) is often a more benign operation, since the coefficient matrix
is a perturbation to the identity.
In particular, in regions where the scattering potential is small in magnitude,
the ``perturbation terms'' $\mtx{S}_{\alpha}\mtx{G}_{\alpha\beta}$
and $\mtx{S}_{\beta}\mtx{G}_{\beta\alpha}$ will be of small magnitude too, which
ensures that (\ref{eq:dvorak}) is well-conditioned.
In other words, the formulation (\ref{eq:dvorak})
avoids the ``spurious'' ill-conditioning issues in (\ref{eq:dvorakbad}).
In addition, the resulting solvers require fewer arithmetic operations.

The approach suggested in \cite{ho2012fast} is to apply a sparse direct
solver package, such as UMFPACK, to the entire system \eqref{eq:sparse}.
Such packages have highly optimized algorithms for selecting pivots to
minimize fill-in, while avoiding the inversion of ill-conditioned matrices.
In our experiments, we find this strategy is as robust as our solver, but
we empirically find the approach of \cite{ho2012fast} to be slower and more memory-intensive
than the technique described here.

\section{Using the solver as a preconditioner}
\label{s:precon}

The computational cost of the direct solver that we have described
depends strongly on the requested accuracy, for two reasons.
First, as the compression tolerance goes to zero, all interaction ranks
increase, and since the solver relies on dense linear algebra, its overall
run time scales cubically with the numerical ranks.
Second, in order to attain high accuracy, the high-order accurate version
of Duan-Rokhlin quadrature must be used, which as discussed in Section
\ref{s:compression} increases the ranks even further.

When high accuracy is desired, the best approach is often
to run the direct solver that we have described at low accuracy, and then
use the computed rough inverse as a preconditioner. This turns out to be
remarkably effective due to the fact that the $4^{\rm th}$-order accurate
Duan-Rokhlin quadrature involves only the diagonal entries of the matrix,
which means that they do not impact the compression of the off-diagonal
blocks at all.

To describe the idea in more detail, suppose that we seek to solve a
high-order discretization such as, cf.~\eqref{eq:linsys},
\begin{equation*}
  (\mtx{I} + \mtx{B}\mtx{G} + \mtx{B} \mtx{T}) \vct{q}  = \vct{f},
\end{equation*}
where $\mtx{G}$ is defined by \eqref{eq:Gmat} (representing the
$O(h^{4})$ order accurate discretization of the integral operator),
and where $\mtx{T}$ is the necessary ``correction'' matrix for a $10^{\rm
th}$-order discretization. In other words, $\mtx{T}$ contains the
$10^{\rm th}$-order corrections minus the $4^{\rm th}$-order corrections.
The idea is now to build a rough approximate inverse $\mtx{M}$ to the more
compressible $4^{\rm th}$-order discretization of the operator, so that
$$
\mtx{M} \approx \bigl(\mtx{I} + \mtx{B}\mtx{G}\bigr)^{-1}.
$$
We then apply GMRES to the preconditioned system
\begin{equation*}
  \mtx{M}(\mtx{I} + \mtx{B}\mtx{G} + \mtx{B} \mtx{T}) \vct{q}  = \mtx{M}\vct{f},
\end{equation*}
exploiting that the original operator $\mtx{I} + \mtx{B}\mtx{G} + \mtx{B}
\mtx{T}$ can be applied rapidly to vectors since $\mtx{B}$ is sparse, and
$\mtx{G} + \mtx{T}$ is a convolution operator that be applied through the
FFT (see \cite{duan2009high} for details).

The left image in Figure \ref{fig:spectra} displays the spectrum in the
complex plane of a discretization of the left-hand side of \eqref{eq:ls}
using 1600 degrees of freedom with $10^{\rm th}$-order quadrature corrections, where
$\ka = 8 \pi$, so that each side of $\Om$ is 4 vacuum wavelengths across,
and the scattering potential was chosen to be the lens described in Section
\ref{s:hps_paper}.
While most of the eigenvalues cluster near 1 reflecting the fact that
\eqref{eq:ls} is a second-kind Fredholm integral equation, there are many
eigenvalues that are located far from 1.  As a result, an iterative method,
such as GMRES, will converge quite slowly. For example, for a right-hand
side corresponding to a plane wave, GMRES takes 62 iterations to reach a
tolerance of $10^{-5}$ and 80 iterations to reach a tolerance of
$10^{-10}$.

The image on the right of Figure \ref{fig:spectra} shows the spectrum after
preconditioning with the direct solver with compression tolerance $10^{-2}$
and the $4^{\rm th}$-order quadrature corrections. Now, all of the eigenvalues are
clustered closely to 1, which means that GMRES will converge in just a few
iterations, even if high accuracy is required. (In fact, all
eigenvalues are within distance 0.06 from 1!) For the same plane
wave as in the previous paragraph, GMRES now only takes 3 iterations to
converge to a tolerance of $10^{-5}$ and 6 iterations to converge to a
tolerance of $10^{-10}$. Furthermore, for this example, building the
preconditioner takes only a fraction of a second on a laptop computer and
each preconditioned iteration is only twice as slow as an
iteration without any preconditioning.

\begin{figure}
  \centering
  \includegraphics[scale=0.40]{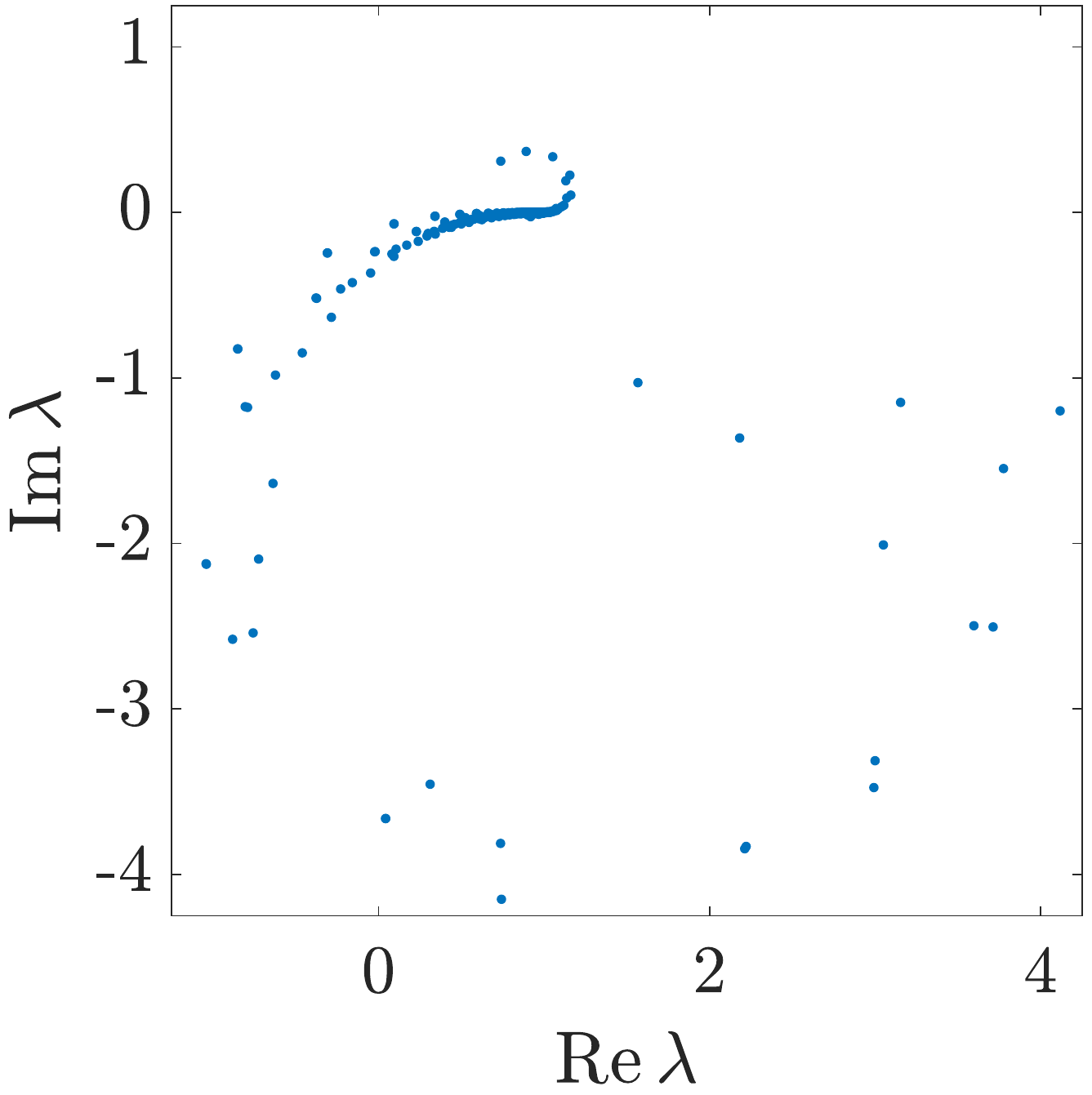} \hspace{4em}
  \includegraphics[scale=0.40]{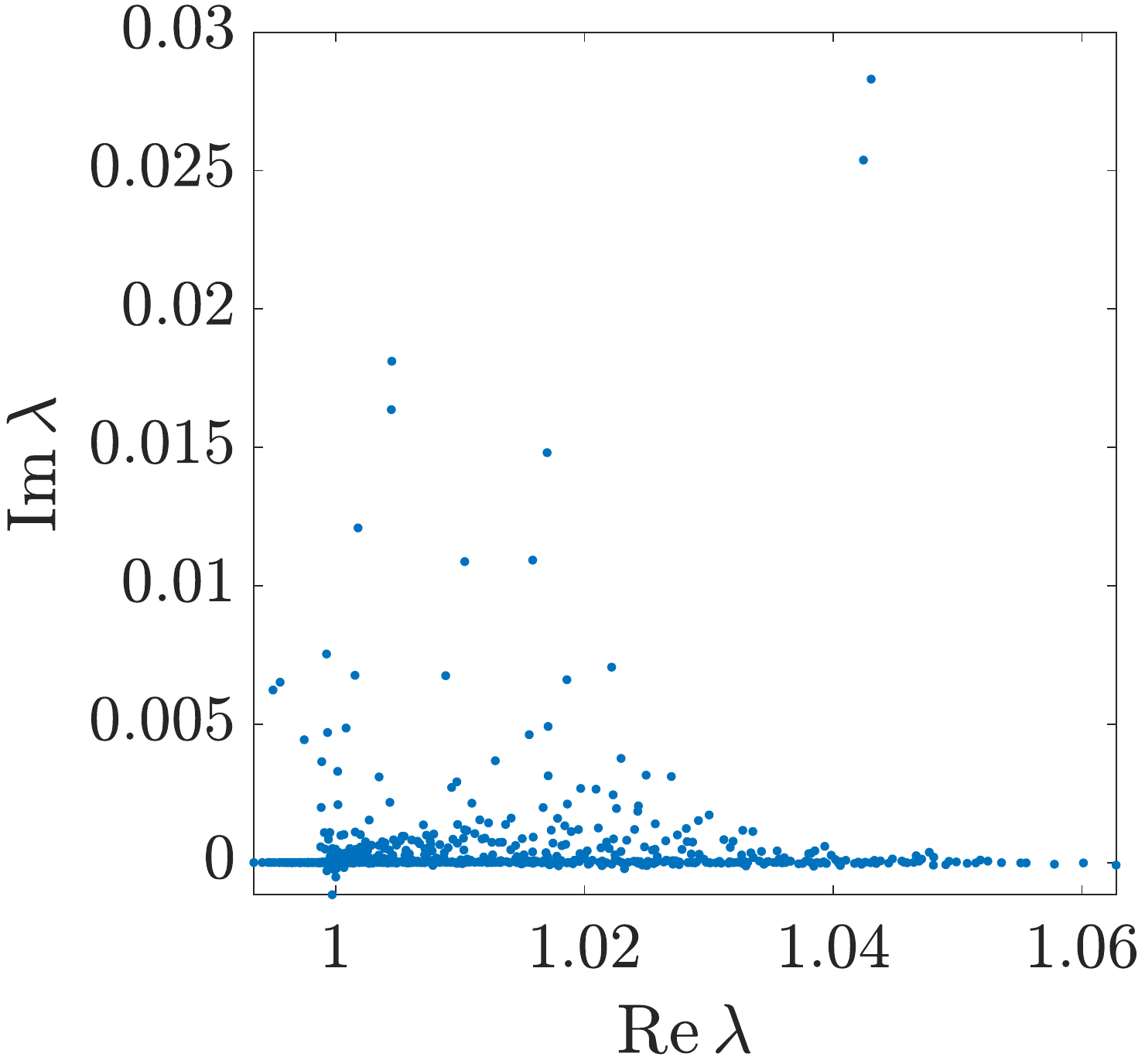}
  \caption{Spectra of unpreconditioned (left) and preconditioned (right)
  discretization of Lippmann-Schwinger equation with $\ka = 8\pi$, 1600
  degrees of freedom, $10^{\rm th}$-order quadrature corrections, and the lens
  potential as described in Section \ref{s:hps_paper}.
  Observe the scale in the figure to the right; all eigenvalues are \textit{very}
  close to one.
  }
  \label{fig:spectra}
\end{figure}

\section{Numerical experiments}
\label{s:numerics}

We present three sets of numerical experiments:
In Section
\ref{s:gaussian}, we study the scaling of our solver with respect to mesh
refinement for a scattering potential given by a Gaussian bump. We
empirically verify the theoretical estimates in Section \ref{s:complexity}.
This is followed by Section \ref{s:cavity}, where we examine the case of
scattering against a cavity. We end on Section \ref{s:hps_paper} which
demonstrates how our solver can be used as a preconditioner as motivated in
Section \ref{s:precon}. Here we consider scattering against a cavity in the
regime where the wavenumber is increased with the number of points per
wavelength held constant, and also four challenging problems introduced in
\cite{gillman2015spectrally}. All timings are reported on a workstation
with 2 Intel Xeon Gold 6254 (18 cores at 3.1GHz base frequency) processors
with 768 GB of RAM.

\subsection{The direct solver applied to a Gaussian scattering potential}
\label{s:gaussian}

We first study the case where the scattering potential is given by $b(\pxx)
= 1.5\exp(-160 \|\pxx\|^2)$ as shown in the left plot in Figure
\ref{fig:gaussian}. The wavenumber is set to $\ka = 25$, so that $\Om$
is approximately 4 wavelengths wide, and the incident field is taken to be
the plane wave $u^{\rm inc}(\pxx) = \exp(i \ka x_1 )$. We run the direct
solver with compression tolerances $10^{-3}$, $10^{-6}$, $10^{-9}$ and
$10^{-12}$ with the $10^{\rm th}$-order quadrature corrections. For each
choice of compression tolerance, we keep the number of degrees of freedom
in a leaf node of the HBS tree constant at 100, increase the number of
levels in the tree, and note the following quantities:

\begin{tabular}{p{2cm}p{13cm}}
   $N$ & The total number of degrees of freedom. \\
   $h$ & The mesh width. Note that $h = 1/\sqrt{N}$. \\
   $T_{\rm skel}$ & Time in seconds to compute the HBS approximation. \\
   $T_{\rm build}$ & Time in seconds to invert the HBS approximation. \\
   $T_{\rm apply}$ & Time in seconds to apply the inverse a single
    time. \\
   mem & Total memory in GB used to store both the HBS approximation
   and its inverse. \\
   res & The relative residual error in the computed solution.
\end{tabular}

\noindent The results are summarized in Tables
\ref{t:gaussian1}--\ref{t:gaussian4}.
The total field is shown in the right plot of Figure \ref{fig:gaussian}.

\begin{figure}
  \centering
  \includegraphics[width=70mm]{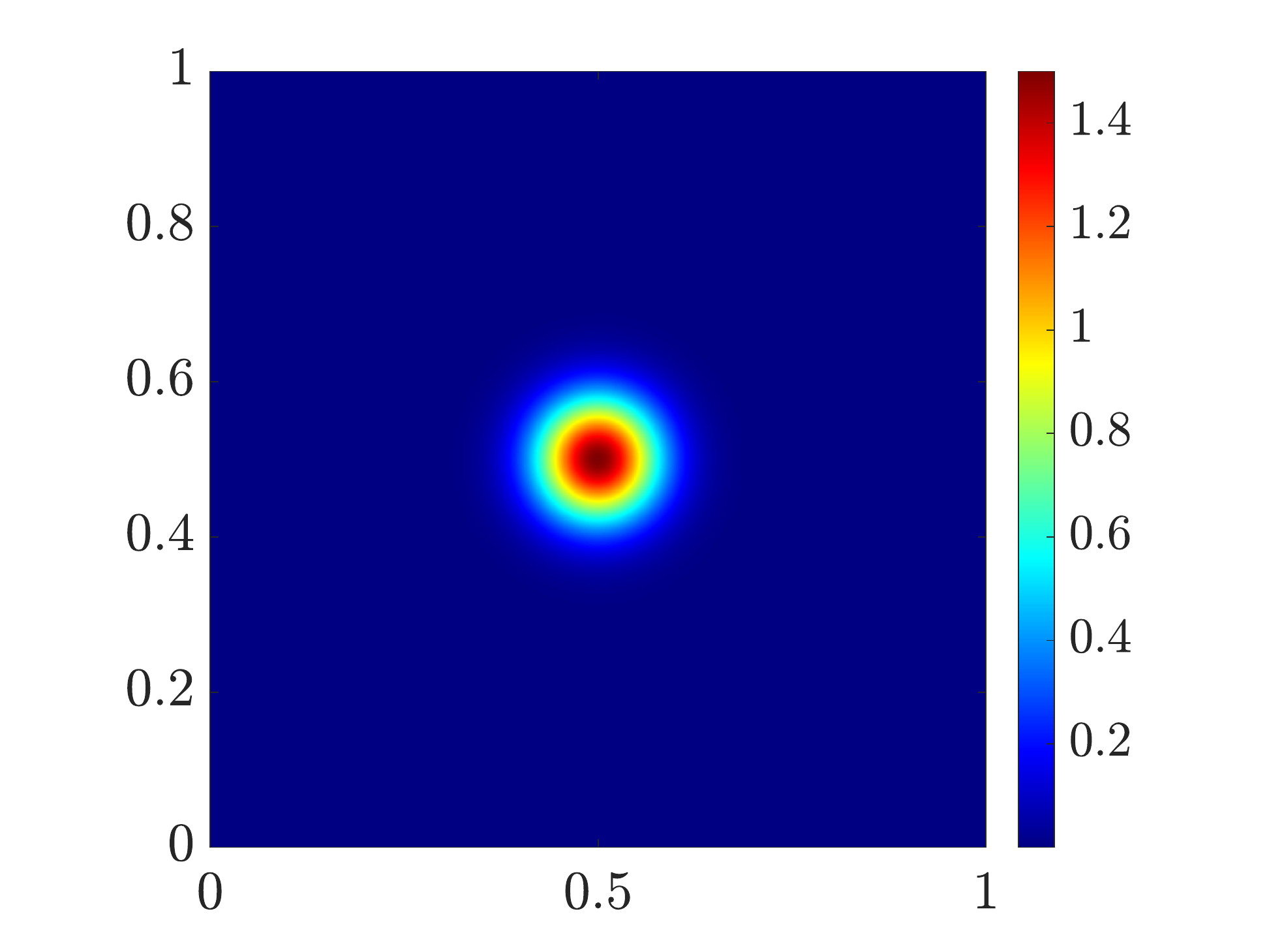}
  \includegraphics[width=70mm]{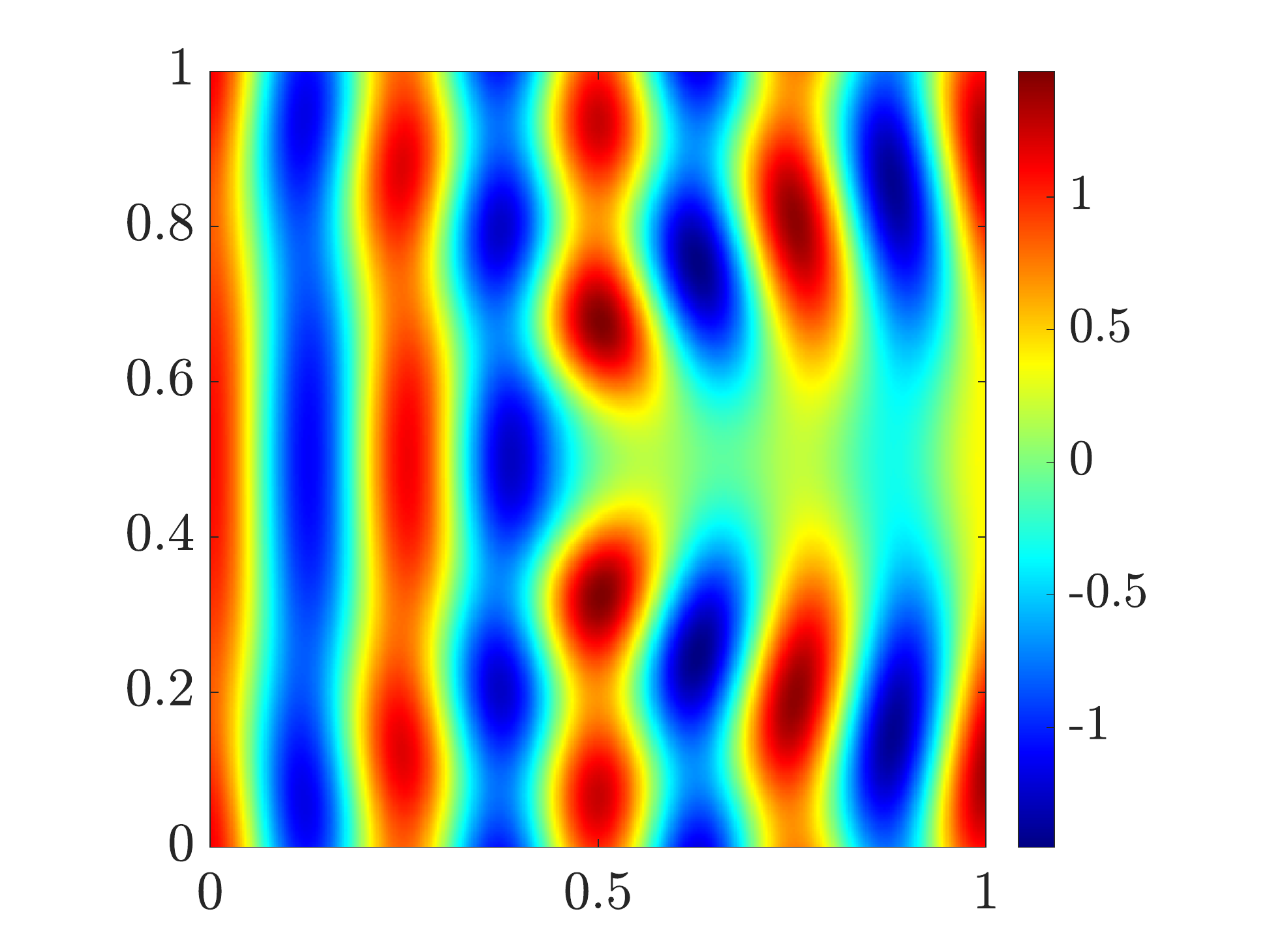}
  \caption{Left: Scattering potential for Gaussian bump in Section
  \ref{s:gaussian}. Right: Total field for Section \ref{s:gaussian}.}
  \label{fig:gaussian}
\end{figure}

\begin{table}[H]
  \centering
  \begin{tabular}{|ccccccc|}
    \hline
    $N$ & $h$ & $T_{\rm skel}$ & $T_{\rm build}$ & $T_{\rm apply}$ & mem &
    res\\
    \hline
    6400 & 0.0125 & 1.35 & 0.40 & 0.05 & 0.12 & 7.74e-05 \\
    25600 & 0.00625 & 5.51 & 1.77 & 0.12 & 0.60 & 5.71e-05 \\
    102400 & 0.003125 & 28.78 & 9.07 & 0.48 & 2.94 & 9.75e-05 \\
    409600 & 0.0015625 & 128.50 & 39.35 & 2.00 & 12.14 & 1.56e-04 \\
    1638400 & 0.00078125 & 680.04 & 165.15 & 10.53 & 48.59 & 5.77e-04 \\
    \hline
  \end{tabular}
  \caption{Data for Section \ref{s:gaussian} with compression tolerance
  $10^{-3}$.}
  \label{t:gaussian1}
\end{table}

\begin{table}[H]
  \centering
  \begin{tabular}{|ccccccc|}
    \hline
    $N$ & $h$ & $T_{\rm skel}$ & $T_{\rm build}$ & $T_{\rm apply}$ & mem &
    res\\
    \hline
    6400 & 0.0125 & 2.30 & 0.86 & 0.07 & 0.27 & 9.54e-09 \\
    25600 & 0.00625 & 11.57 & 4.43 & 0.19 & 1.42 & 7.13e-08 \\
    102400 & 0.003125 & 56.32 & 22.93 & 0.76 & 6.86 & 4.15e-08 \\
    409600 & 0.0015625 & 328.92 & 114.94 & 6.58 & 31.33 & 4.39e-08 \\
    1638400 & 0.00078125 & 2201.34 & 590.61 & 33.50 & 145.71 & 3.14e-07 \\
    \hline
  \end{tabular}
  \caption{Data for Section \ref{s:gaussian} with compression tolerance
  $10^{-6}$.}
  \label{t:gaussian2}
\end{table}

\begin{table}[H]
  \centering
  \begin{tabular}{|ccccccc|}
    \hline
    $N$ & $h$ & $T_{\rm skel}$ & $T_{\rm build}$ & $T_{\rm apply}$ & mem &
    res\\
    \hline
    6400 & 0.0125 & 2.82 & 1.22 & 0.07 & 0.36 & 1.57e-12 \\
    25600 & 0.00625 & 16.48 & 6.91 & 0.24 & 1.98 & 3.37e-12 \\
    102400 & 0.003125 & 80.08 & 37.49 & 1.04 & 10.01 & 1.45e-11 \\
    409600 & 0.0015625 & 475.07 & 200.59 & 6.82 & 48.22 & 3.48e-09 \\
    1638400 & 0.00078125 & 3139.07 & 1023.81 & 67.17 & 225.26 & 2.99e-09 \\
    \hline
  \end{tabular}
  \caption{Data for Section \ref{s:gaussian} with compression tolerance
  $10^{-9}$.}
  \label{t:gaussian3}
\end{table}

\begin{table}[H]
  \centering
  \begin{tabular}{|ccccccc|}
    \hline
    $N$ & $h$ & $T_{\rm skel}$ & $T_{\rm build}$ & $T_{\rm apply}$ & mem &
    res\\
    \hline
    6400 & 0.0125 & 2.99 & 1.41 & 0.07 & 0.38 & 1.87e-15 \\
    25600 & 0.00625 & 15.90 & 7.80 & 0.22 & 2.11 & 3.80e-15 \\
    102400 & 0.003125 & 83.22 & 42.56 & 1.00 & 10.82 & 6.94e-15 \\
    409600 & 0.0015625 & 529.54 & 237.64 & 5.98 & 52.79 & 1.71e-14 \\
    1638400 & 0.00078125 & 3474.55 & 1304.09 & 47.13 & 250.27 & 5.07e-14 \\
    \hline
  \end{tabular}
  \caption{Data for Section \ref{s:gaussian} with compression tolerance
  $10^{-12}$.}
  \label{t:gaussian4}
\end{table}

Figure \ref{fig:gaussian_scaling} provides plots that compare the observed
timings with the estimates of the asymptotic complexity in Section \ref{s:complexity}.
The empirical results indicate slightly better scaling than the predicted
$O(N^{3/2})$ complexity, but this effect is merely due to the fact that the
true asymptotic has not yet had a chance to establish itself at these problem
sizes. Looking at the prefactors in the scalings, we see that compression is
significantly slower than inversion. As the compression tolerance $\epsilon$
goes to zero, the prefactors grow at an observed rate of between $O(\log(1/\epsilon))$
and $O((\log(1/\epsilon))^{2})$.

\begin{remark}
It is interesting to note that in Tables \ref{t:gaussian1}--\ref{t:gaussian4}
the residual errors tend to grow with the number of degrees of freedom.
This is due to the accumulation of errors in compression as the number of
levels in the tree increase, cf.~Remark \ref{remark:howpicklocaltolerance}.
Nonetheless, the residuals here are all close to the HBS compression tolerance.
\end{remark}

\begin{figure}
  \centering
  \includegraphics[scale=0.39]{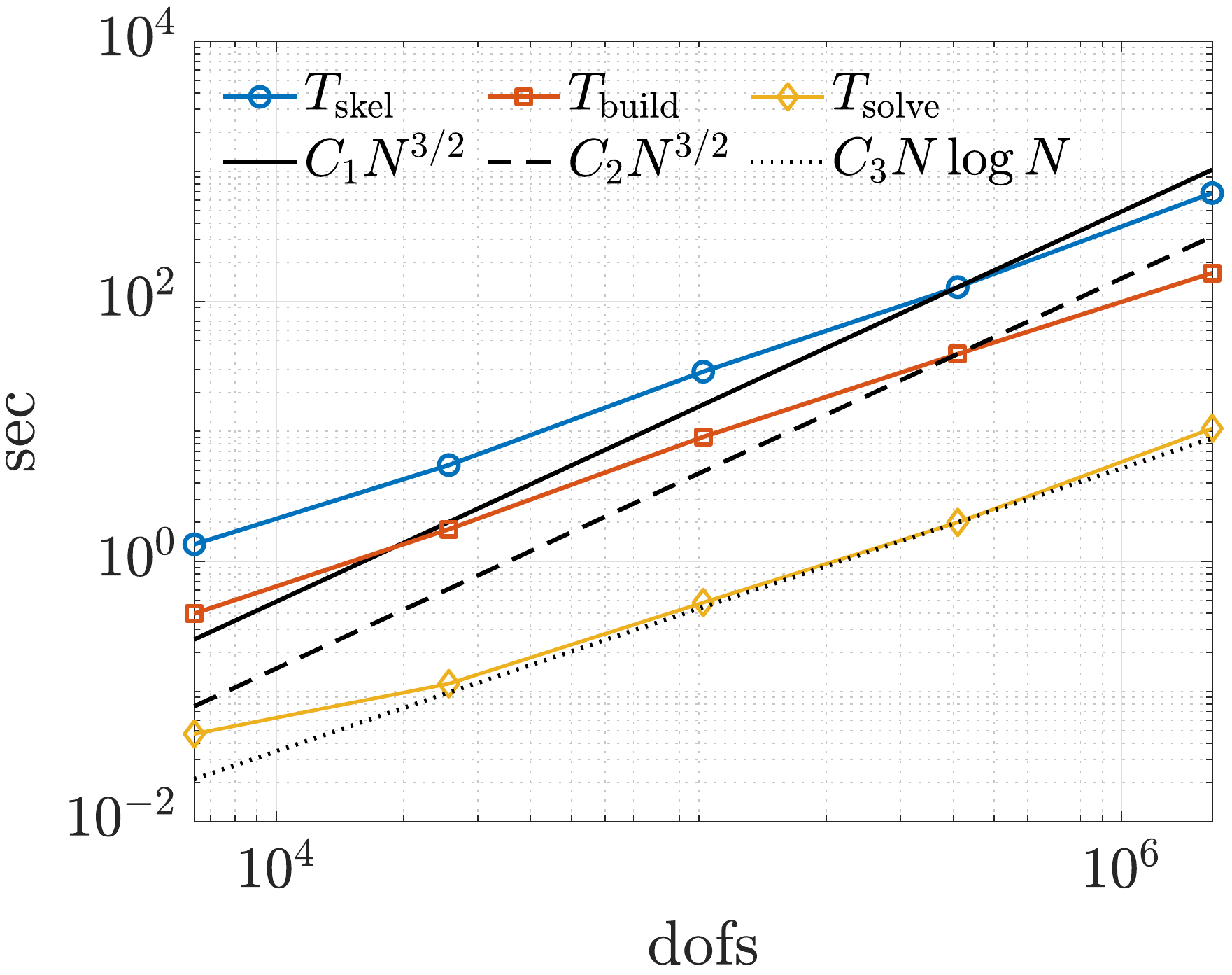} \hspace{2em}
  \includegraphics[scale=0.39]{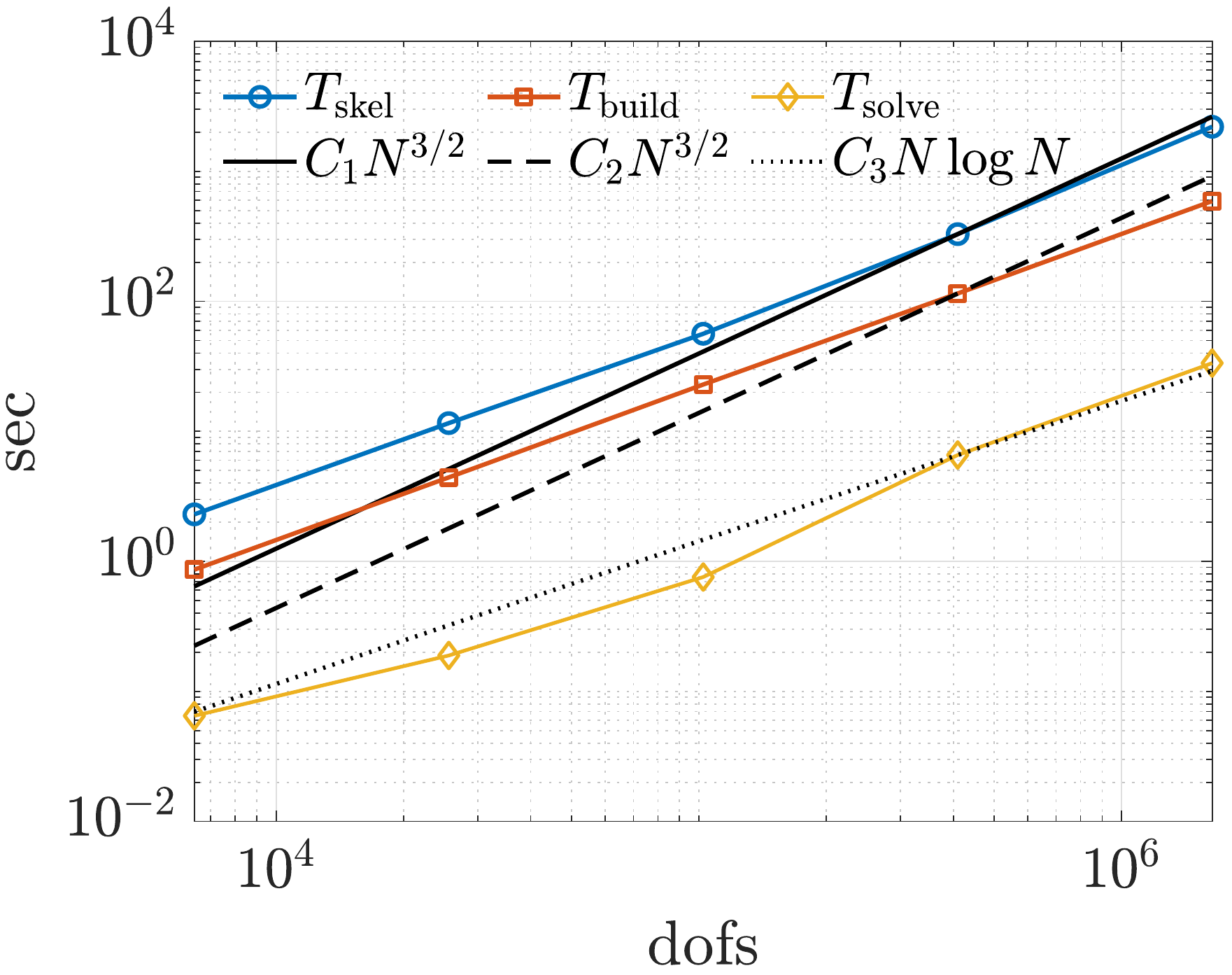} \\[1em]
  \includegraphics[scale=0.39]{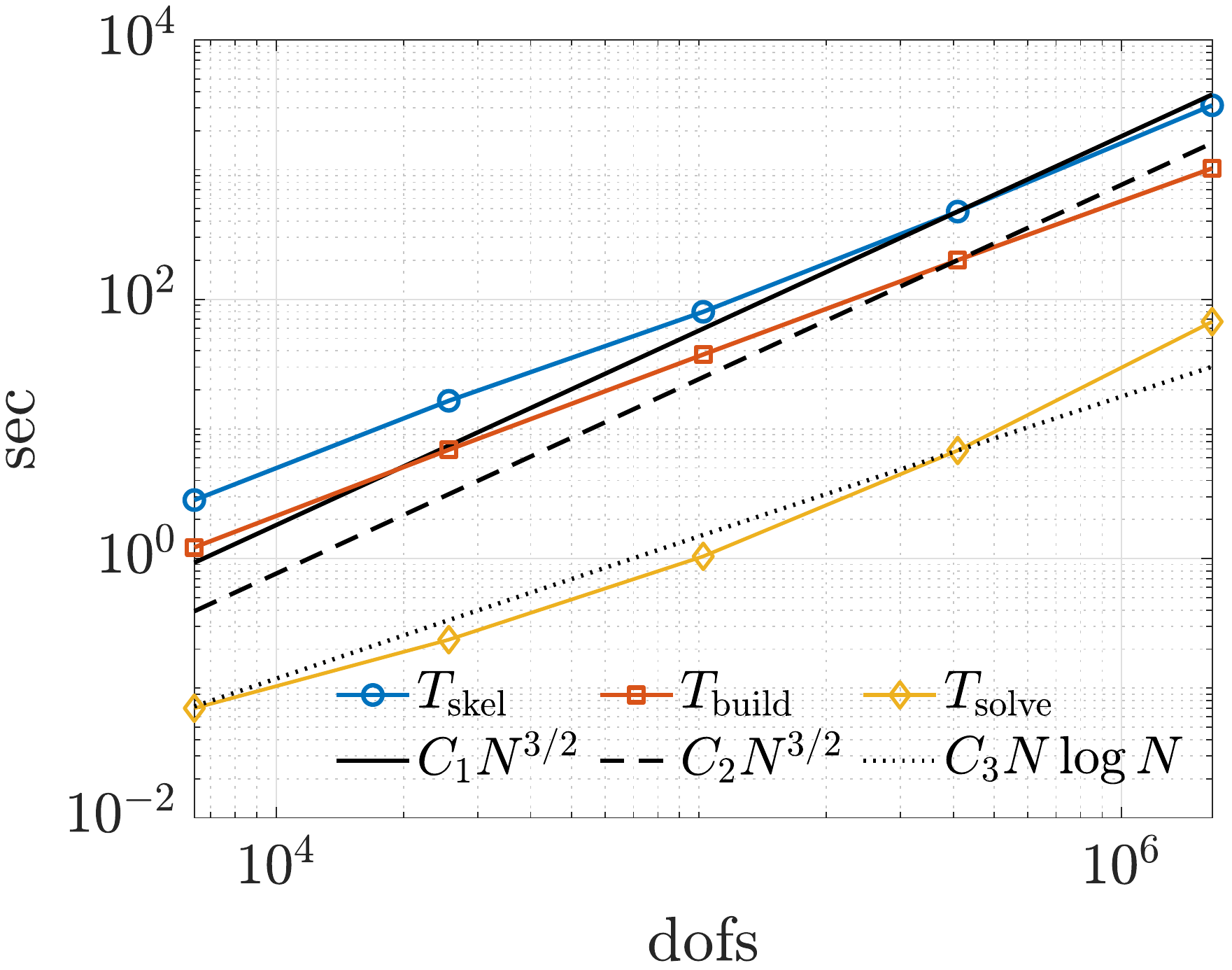} \hspace{2em}
  \includegraphics[scale=0.39]{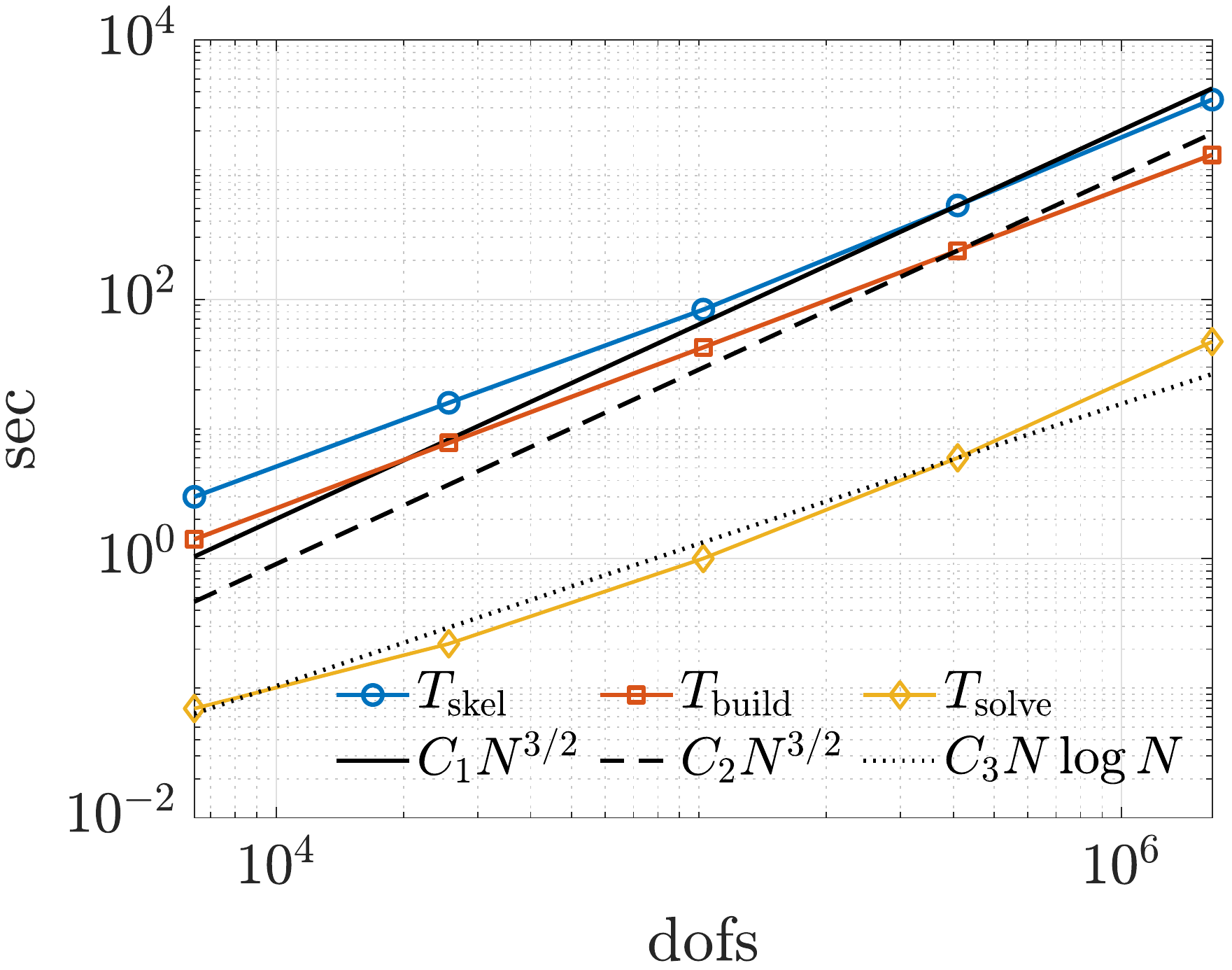}
  \caption{Scaling of HBS compression, inversion, and inverse application
    for experiment in Section \ref{s:gaussian} with compression tolerances
    $10^{-3}$ (top left), $10^{-6}$ (top right), $10^{-9}$ (bottom left),
    and $10^{-12}$ (bottom right).}
  \label{fig:gaussian_scaling}
\end{figure}

\subsection{The direct solver applied to a cavity problem}
\label{s:cavity}

Solutions to the problem considered in Section \ref{s:gaussian} do not
involve any backscattering; a fact that makes the problem highly amenable
to iterative methods.
Indeed, it was found in \cite{duan2009high} that for
such a potential even an iterative method without preconditioning converges
in just a few iterations, although a direct solver might still be
better-suited when one is interested in scattering for multiple incident
fields.

We now consider a cavity potential given in polar
coordinates by
\[
  b(r,\ta) = (1-\sin(0.5\ta)^{500}) \exp(-2000(0.1-r^2)^2)
\]
(see the left image in Figure \ref{fig:cavity1}). Without preconditioning, the number
of GMRES iterations required to solve even relatively low frequency
scattering to a couple of digits is unfeasibly high, due to the
near-resonant behavior of the problem.
We repeat the experiments of the previous section for this scattering
potential when $\ka = 50.27$, so that $\Om$ is about 8 wavelengths
across. The results are given in Tables \ref{t:cavity1}--\ref{t:cavity4}
and Figure \ref{fig:cavity_scaling}. The total field is plotted in the
right of Figure \ref{fig:cavity1}.

\begin{figure}
  \centering
  \includegraphics[width=58mm]{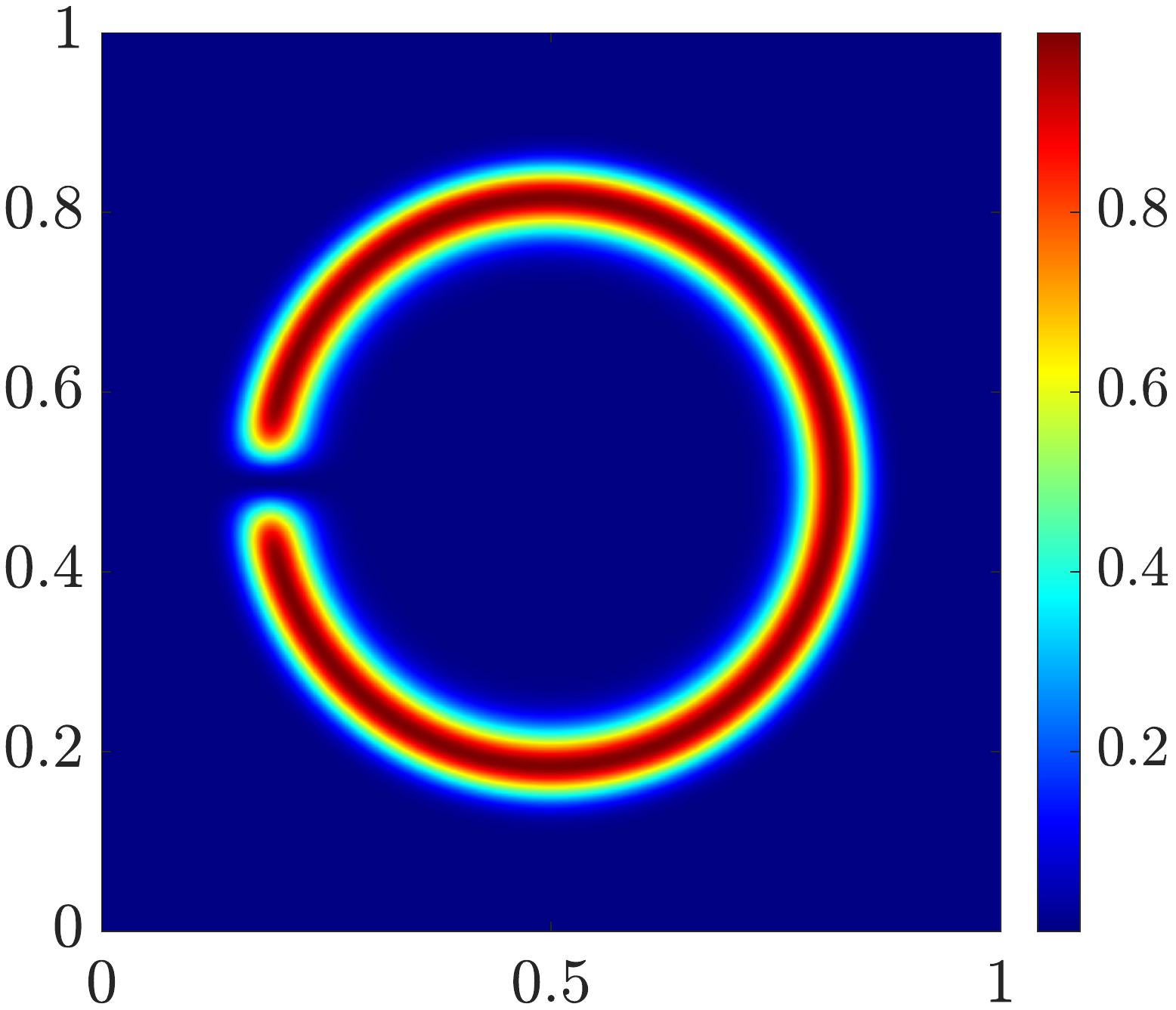} \hspace{2em}
  \includegraphics[width=56mm]{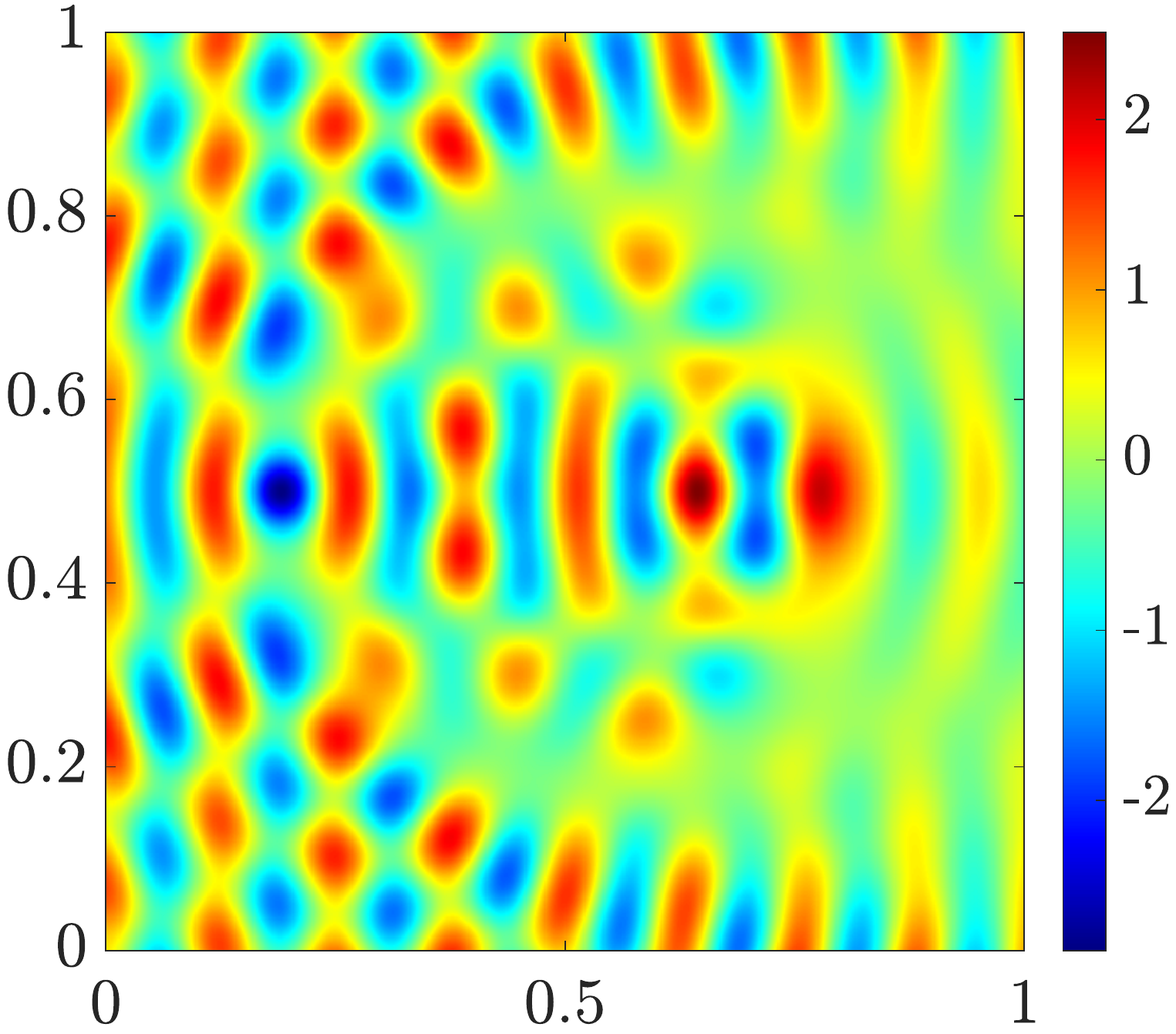}
  \caption{Left: Scattering potential for Section \ref{s:cavity}.
  Right: Total field for $\ka = 50.27$.}
  \label{fig:cavity1}
\end{figure}

\begin{table}[H]
  \centering
  \begin{tabular}{|ccccccc|}
    \hline
    $N$ & $h$ & $T_{\rm skel}$ & $T_{\rm build}$ & $T_{\rm apply}$ & mem &
    res\\
    \hline
    6400 & 0.0125 & 2.21 & 0.46 & 0.05 & 0.15 & 6.42e-05 \\
    25600 & 0.00625 & 5.36 & 2.16 & 0.14 & 0.67 & 2.82e-04 \\
    102400 & 0.003125 & 30.70 & 9.63 & 0.77 & 3.08 & 3.68e-04 \\
    409600 & 0.0015625 & 167.84 & 46.80 & 3.09 & 14.52 & 6.86e-04 \\
    1638400 & 0.00078125 & 940.77 & 222.96 & 13.16 & 65.10 & 1.14e-03 \\
    \hline
  \end{tabular}
  \caption{Data for Section \ref{s:cavity} with compression tolerance
  $10^{-3}$.}
  \label{t:cavity1}
\end{table}

\begin{table}[H]
  \centering
  \begin{tabular}{|ccccccc|}
    \hline
    $N$ & $h$ & $T_{\rm skel}$ & $T_{\rm build}$ & $T_{\rm apply}$ & mem &
    res\\
    \hline
    6400 & 0.0125 & 2.42 & 0.94 & 0.07 & 0.30 & 9.52e-08 \\
    25600 & 0.00625 & 12.59 & 4.97 & 0.42 & 1.57 & 5.89e-08 \\
    102400 & 0.003125 & 60.27 & 25.71 & 1.12 & 7.64 & 5.54e-07 \\
    409600 & 0.0015625 & 360.95 & 130.26 & 6.52 & 35.64 & 5.75e-07 \\
    1638400 & 0.00078125 & 2280.63 & 645.20 & 25.84 & 158.14 & 1.09e-06 \\
    \hline
  \end{tabular}
  \caption{Data for Section \ref{s:cavity} with compression tolerance
  $10^{-6}$.}
  \label{t:cavity2}
\end{table}

\begin{table}[H]
  \centering
  \begin{tabular}{|ccccccc|}
    \hline
    $N$ & $h$ & $T_{\rm skel}$ & $T_{\rm build}$ & $T_{\rm apply}$ & mem &
    res\\
    \hline
    6400 & 0.0125 & 2.61 & 1.11 & 0.07 & 0.36 & 4.23e-11 \\
    25600 & 0.00625 & 15.81 & 6.54 & 0.21 & 2.01 & 7.40e-11 \\
    102400 & 0.003125 & 79.86 & 36.36 & 1.06 & 10.38 & 2.67e-10 \\
    409600 & 0.0015625 & 491.72 & 206.02 & 9.55 & 50.42 & 4.59e-10 \\
    1638400 & 0.00078125 & 3289.37 & 1109.28 & 25.72 & 234.48 & 1.02e-08 \\
    \hline
  \end{tabular}
  \caption{Data for Section \ref{s:cavity} with compression tolerance
  $10^{-9}$.}
  \label{t:cavity3}
\end{table}

\begin{table}[H]
  \centering
  \begin{tabular}{|ccccccc|}
    \hline
    $N$ & $h$ & $T_{\rm skel}$ & $T_{\rm build}$ & $T_{\rm apply}$ & mem &
    res\\
    \hline
    6400 & 0.0125 & 2.72 & 1.23 & 0.06 & 0.39 & 3.28e-14 \\
    25600 & 0.00625 & 16.34 & 7.20 & 0.21 & 2.15 & 1.03e-13 \\
    102400 & 0.003125 & 82.90 & 40.68 & 1.16 & 10.95 & 6.29e-13 \\
    409600 & 0.0015625 & 515.89 & 227.17 & 5.08 & 53.34 & 3.36e-12 \\
    1638400 & 0.00078125 & 3495.09 & 1304.42 & 48.21 & 252.23 & 1.02e-11 \\
    \hline
  \end{tabular}
  \caption{Data for Section \ref{s:cavity} with compression tolerance
  $10^{-12}$.}
  \label{t:cavity4}
\end{table}

Besides a marginal increase in cost due to the slightly higher wavenumber,
the results are practically identical to those for the Gaussian bump. This
illustrates the fact that the direct solver, unlike iterative solvers, is
agnostic to features of the scattering potential.

\begin{figure}
  \centering
  \includegraphics[scale=0.39]{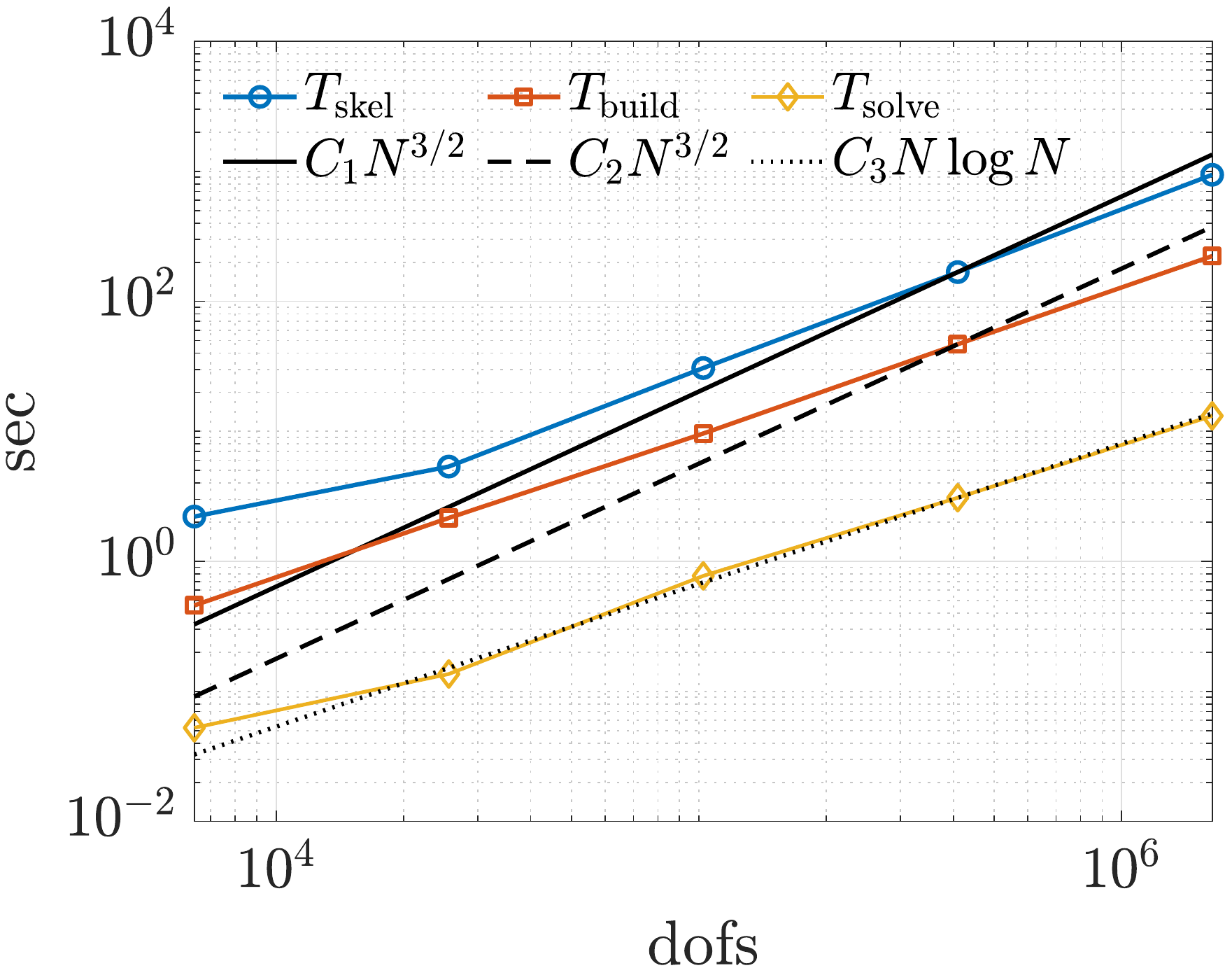} \hspace{2em}
  \includegraphics[scale=0.39]{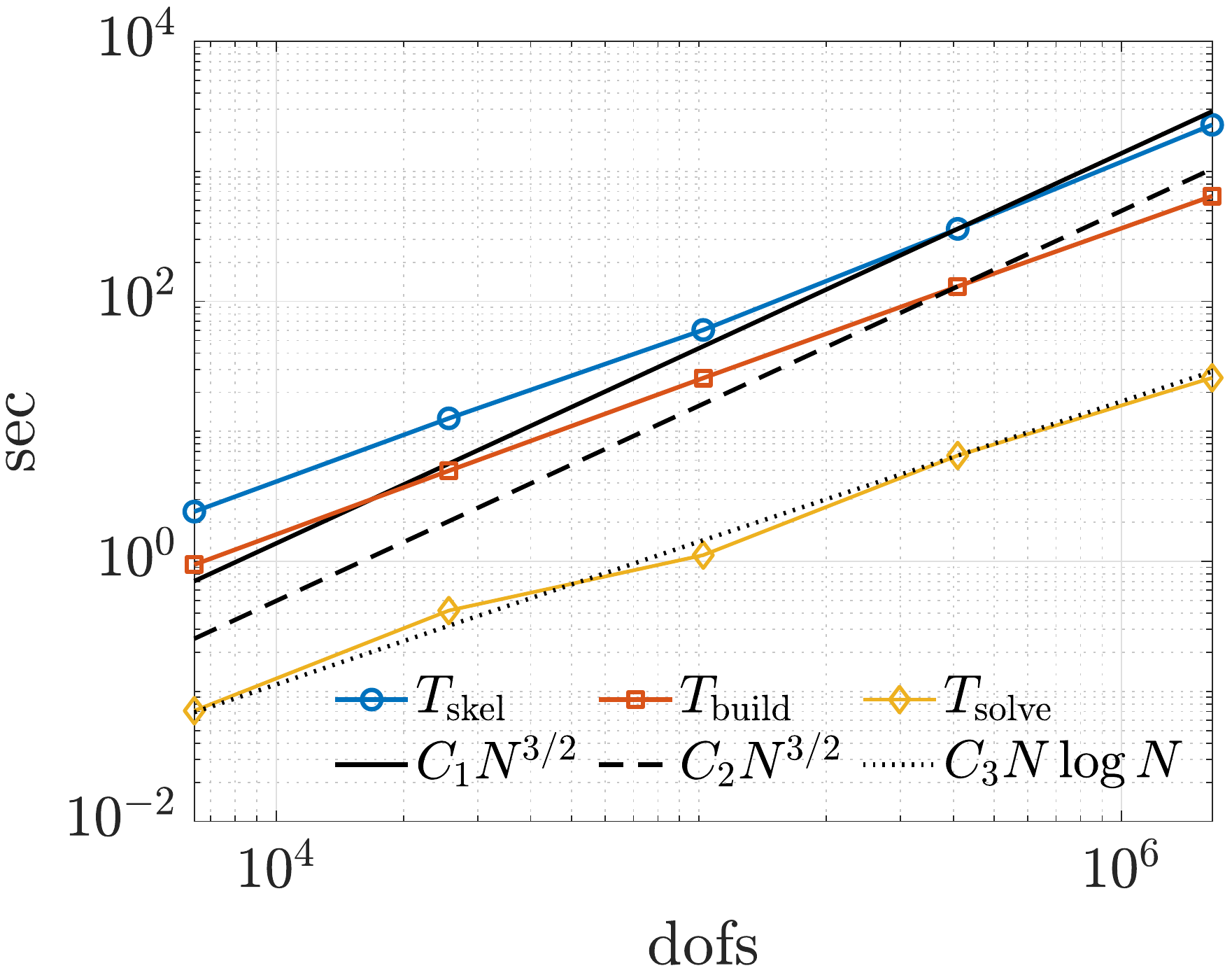} \\[1em]
  \includegraphics[scale=0.39]{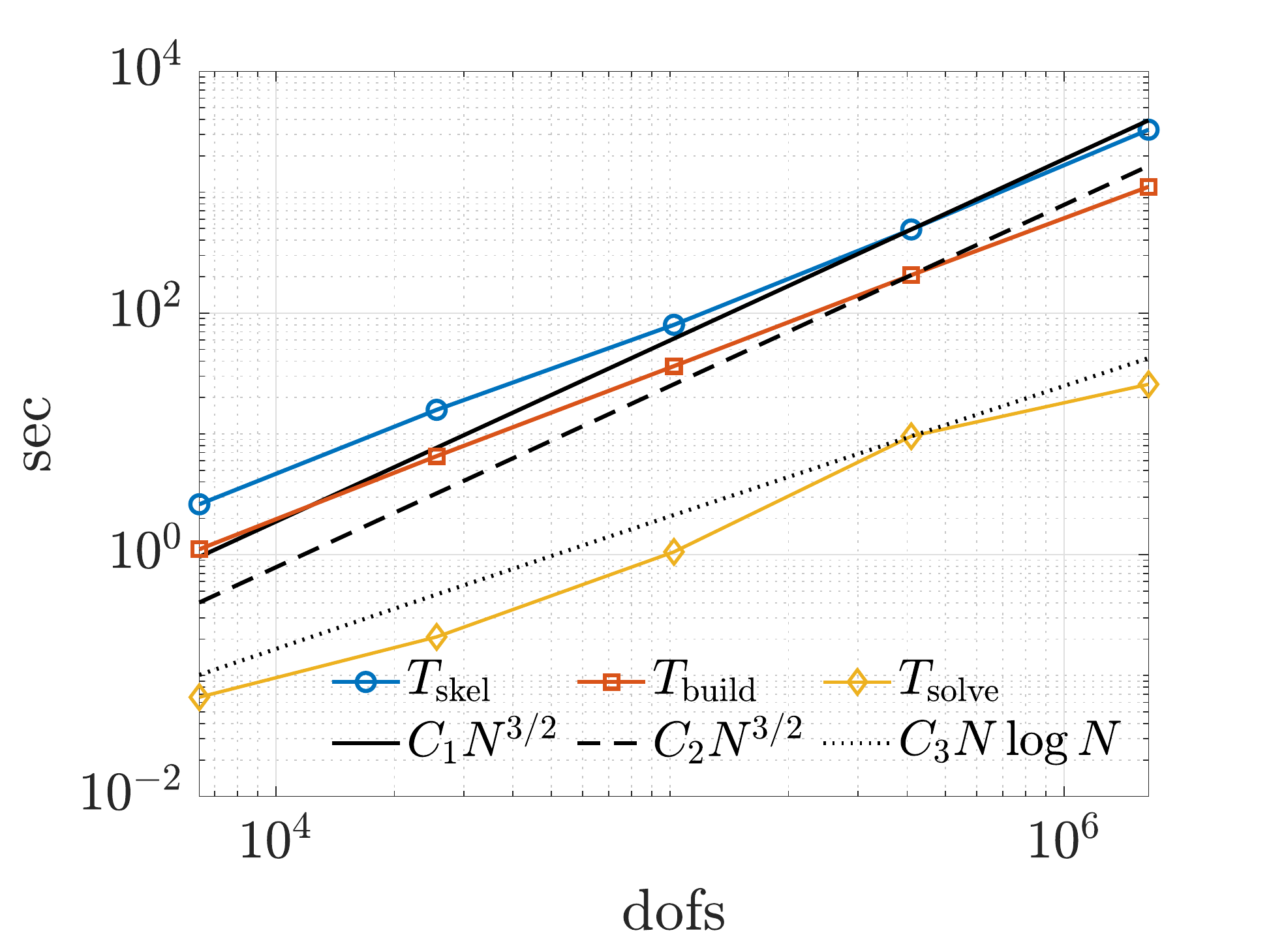} \hspace{1em}
  \includegraphics[scale=0.39]{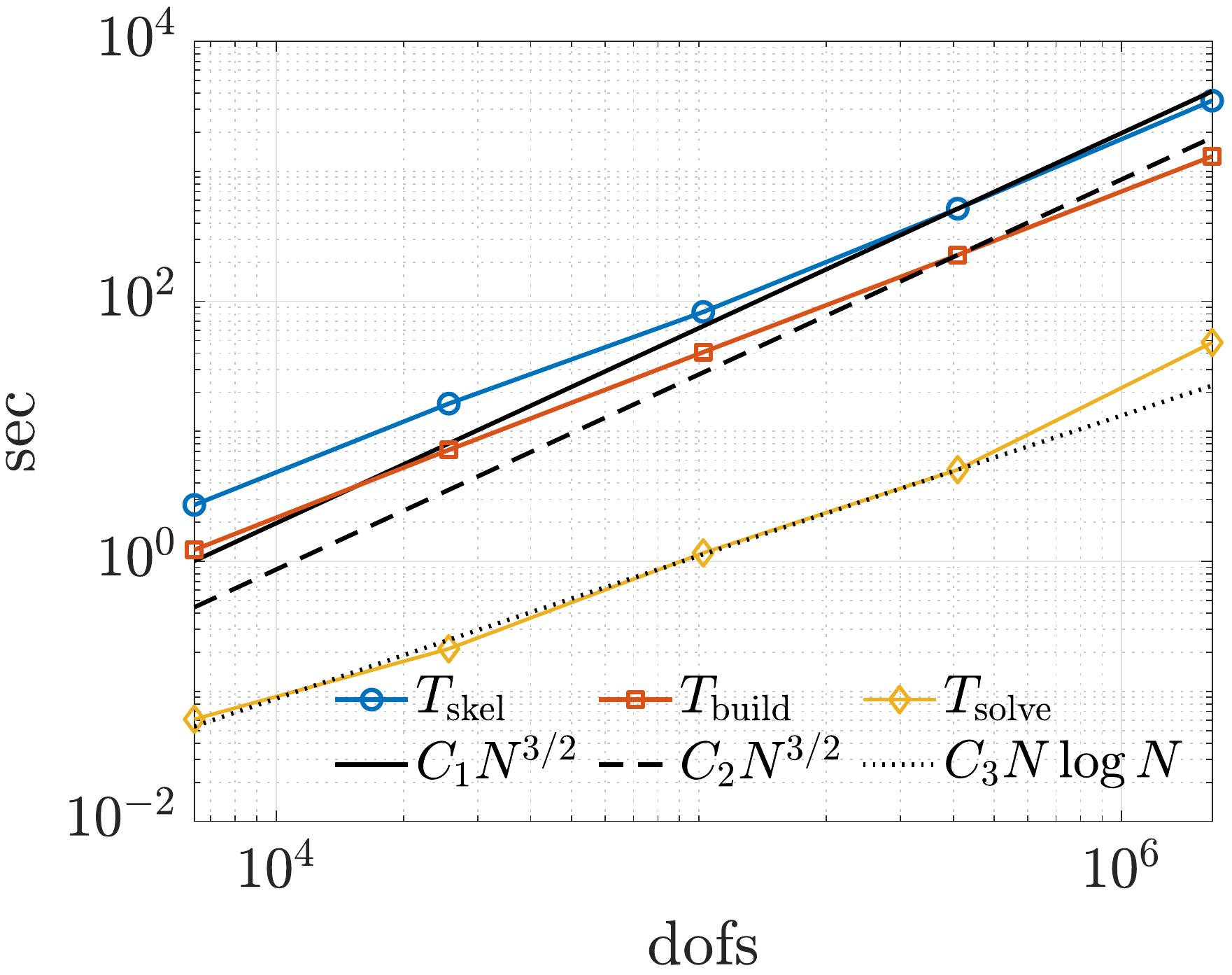}
  \caption{Scaling of HBS compression, inversion, and inverse application
    for direct solver experiment in Section \ref{s:cavity} with compression
    tolerances $10^{-3}$ (top left), $10^{-6}$ (top right), $10^{-9}$
    (bottom left), and $10^{-12}$ (bottom right).}
  \label{fig:cavity_scaling}
\end{figure}

\subsection{Using an approximate inverse as a preconditioner}
\label{s:hps_paper}

We will next demonstrate the effectiveness of using the direct
solver as a preconditioner.
We recall from Section \ref{s:precon} that the idea is to use
a low-order quadrature with a small correction stencil in order to improve compressibility
of the off-diagonal blocks of the coefficient matrix, and
a lower compression tolerance to obtain a leaner representation.
The resulting inverse is too rough to use as a direct solver for
the highly ill-conditioned problems under consideration, but is an
excellent preconditioner.

For an initial experiment, we stick with the cavity problem described
in Section \ref{s:cavity}. We use the a compression tolerance of $10^{-4}$ and
a $4^{\rm th}$-order quadrature corrections to build the preconditioner for
the GMRES iteration. The matrix-vector product is executed using the coefficient
matrix with $10^{\rm th}$-order corrections applied using the FFT as
discussed in Section \ref{s:quadcorr}. We apply this approach to the cavity
scattering potential where the number of points per wavelength is kept
fixed at 10 and the wavenumber is increased, with GMRES set to terminate
when the norm of the relative residual is below $10^{-10}$. In addition to
the relevant previously defined quantities, we also examine:

\begin{tabular}{p{4cm}p{10cm}}
   $T_{\rm gmres}$ & Total time spent by GMRES. \\
   iter & The number of GMRES iterations required to reach a relative
   residual with norm below $10^{-10}$. \\
\end{tabular}

\noindent Table \ref{t:cavity} reports these results for computational domains
ranging from approximately 8 to 512 wavelengths across. The total field
when $\ka = 201.06$ and $\ka = 804.25$ are displayed in the left and
right plots of Figure \ref{fig:cavity2}, respectively.

\begin{table}[H]
  \centering
  \begin{tabular}{|cccccccc|}
    \hline
    $N$ & $\ka$ & $T_{\rm skel}$ & $T_{\rm build}$ & $T_{\rm gmres}$ & mem
    & iter
    & res \\
    \hline
    6400 & 50.27 & 0.23 & 0.24 & 0.20 & 0.04 & 4 & 6.97e-11\\
    25600 & 100.53 & 0.65 & 0.99 & 0.62 & 0.21 & 5 & 6.16e-12 \\
    102400 & 201.06 & 2.26 & 4.36 & 2.49 & 1.01 & 6 & 1.04e-12 \\
    409600 & 402.12 & 14.91 & 20.06 & 9.78 & 4.67 & 6 & 3.23e-11 \\
    1638400 & 804.25 & 99.01 & 91.37 & 56.13 & 21.16 & 9 & 8.12e-12 \\
    6553600 & 1608.50 & 430.60 & 398.88 & 330.91 & 94.63 & 13 & 3.93e-11 \\
    26214400 & 3216.99 & 3102.09 & 2024.16 & 2698.53 & 418.37 & 22 &
    3.30e-11 \\
    \hline
  \end{tabular}
  \caption{Data for preconditioned iterative solver for the cavity problem
  in Section \ref{s:hps_paper}.}
  \label{t:cavity}
\end{table}

\begin{figure}
  \centering
  \includegraphics[width=58mm]{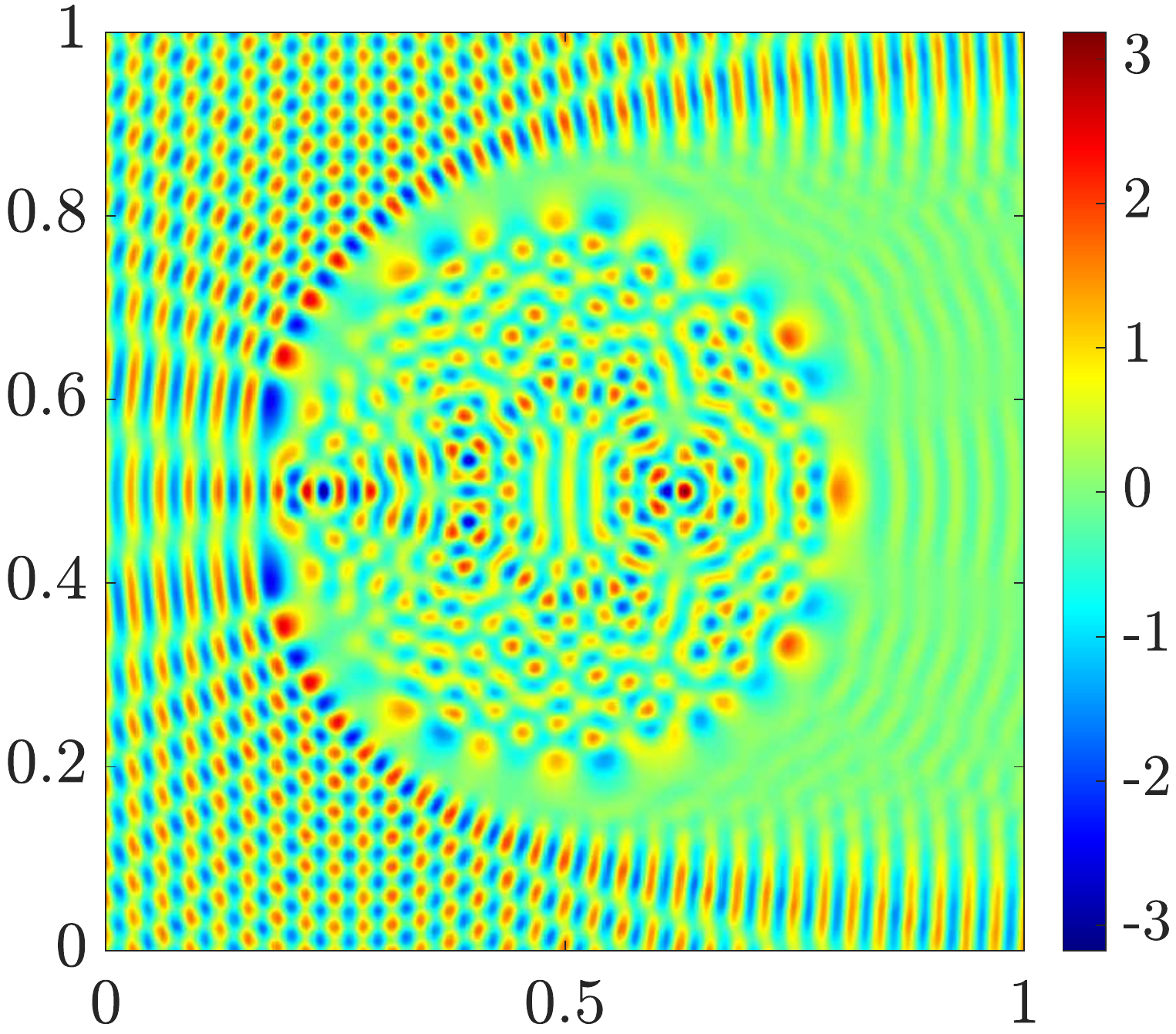} \hspace{2em}
  \includegraphics[width=58mm]{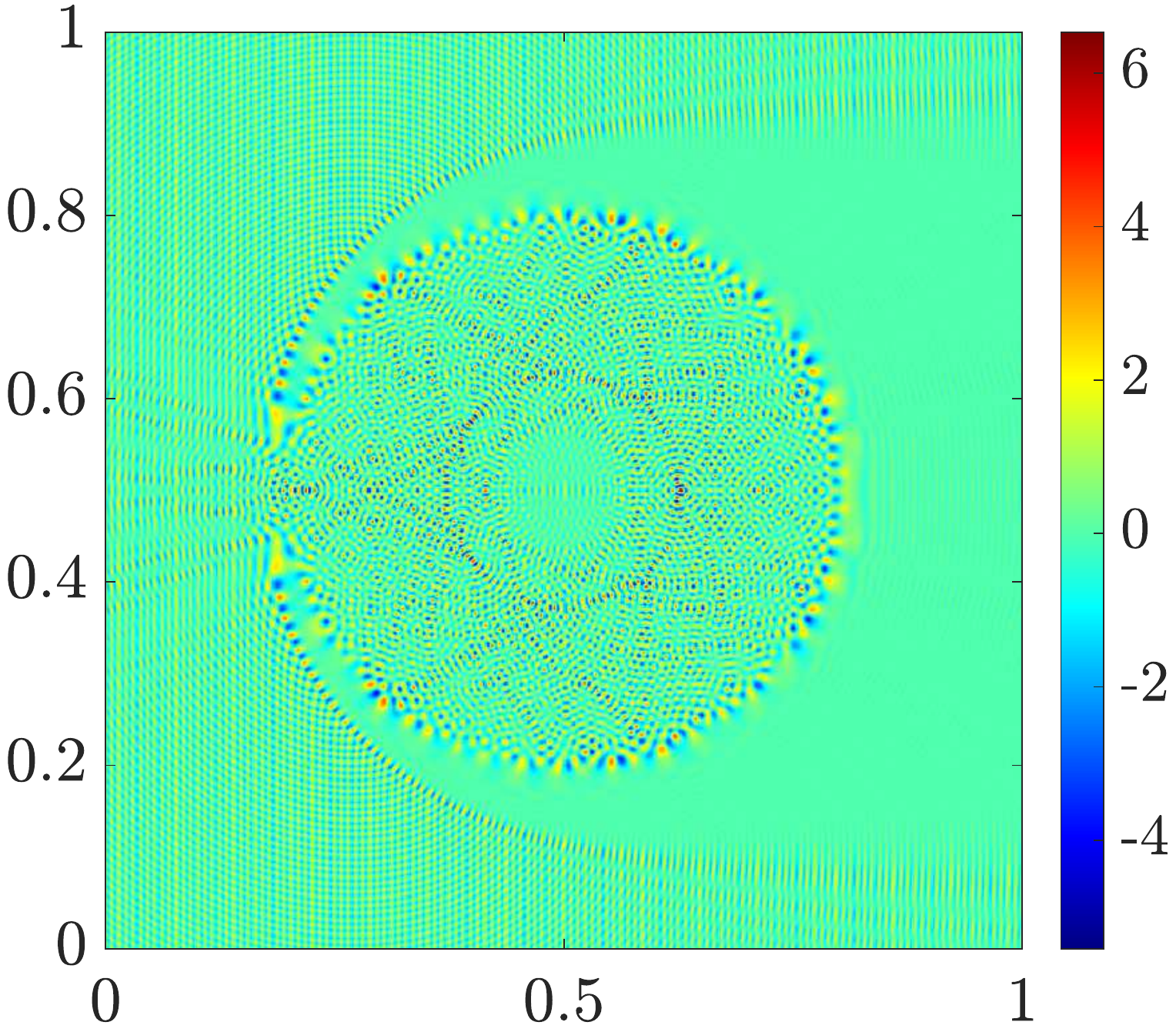}
  \caption{Total field for $\ka = 201.06$ (left) and $\ka = 804.25$ (right)
  for cavity potential in Figure \ref{fig:cavity1}.}
  \label{fig:cavity2}
\end{figure}

Without preconditioning, the problems in the larger domains in Table
\ref{t:cavity} would require thousands of GMRES iterations to achieve
convergence to a tolerance that yields accuracy for these intrinsically
ill-conditioned problems. To investigate whether the excellent performance
we observed for the cavity problem was a fluke or is the universal behavior
of the preconditioner, we next consider four challenging test problems that
were introduced in \cite{gillman2015spectrally}:

\begin{tabular}{p{3.5cm}p{11cm}}
    \textit{Lens:} & A vertically-graded lens given by $b(\pxx) = 4(x_2
    -0.1)[1-\operatorname{erf}(25(\|\pxx\|-0.3))]$ with $\ka = 300$ (Figure
    \ref{fig:hps_paper_pots}, top left). The
    incident field is given by a plane wave $u^{\rm inc}(\pxx) = \exp(i \ka
    (x_1 - 0.5))$. The computational domain is about 47.7 vacuum wavelengths
    across.\\
    \textit{Random bumps:} & The sum of 200 Gaussian bumps randomly placed in
    $\Om$ and rolled off to zero in a smooth fashion with $\ka = 160$
    (Figure \ref{fig:hps_paper_pots}, top right). The
    incident field is given by a plane wave $u^{\rm inc}(\pxx) = \exp(i \ka
    (x_1 - 0.5))$. The computational domain is about 25.5 vacuum wavelengths
    across. \\
    \textit{Photonic crystal:} & A 20 by 20 square grid of Gaussian bumps with a
    channel removed with $\ka = 77.1$ (the first complete band gap) (Figure
    \ref{fig:hps_paper_pots}, bottom). The incident field is given by a
    plane wave $u^{\rm inc}(\pxx) = \exp(i \ka (x_1 - 0.5))$. The
    computational domain is about 12.2 vacuum wavelengths across.\\
   \textit{Photonic crystal 2:} & Same geometry and wavenumber as above. The
incident field is now given by a plane wave $u^{\rm inc}(\pxx) = \exp(i \ka
(-x_1/\sqrt{2} + x_2/\sqrt{2}-1))$.
\end{tabular}

As in the previous section, for each problem we use our direct solver as a
preconditioner for GMRES with $4^{\rm th}$-order quadrature corrections and
compression tolerance set to $10^{-4}$, and study the behavior as the mesh
is refined. The iterative solver again uses the $10^{\rm th}$-order
quadrature corrections and matrix-vector multiplications are done using the
FFT. GMRES is set to terminate when the norm of the relative residual
falls below $10^{-10}$ or 100 iterations is reached. The two previously
undefined quantities are also reported in Table \ref{t:hps_paper}:
\begin{center}
\begin{tabular}{p{4cm}p{10cm}}
  $E_{\rm near}$ & The absolute error between the real part of the
    computed solution at the point $(.75,.5)$ and the reference solution
    given in \cite{gillman2015spectrally}. \\
  $E_{\rm far}$ & The absolute error between the real part of the
    computed solution at the point $(1.5,1)$ and the reference solution
    given in \cite{gillman2015spectrally}.
\end{tabular}
\end{center}

\noindent The real parts of the total field for each experiment is plotted in Figure
\ref{fig:hps_paper_fields}. Table \ref{t:hps_paper} clearly demonstrates
how powerful the preconditioner is. For problems involving 26M degrees of
freedom, we obtain nine or ten correct digits with between 9 and 23
iterations across the different examples.  Let us stress that the
quantities $E_{\rm near}$ and $E_{\rm far}$ report errors in solving the
PDE, not just in solving the linear system.

\begin{figure}
  \centering
  \includegraphics[scale=0.3]{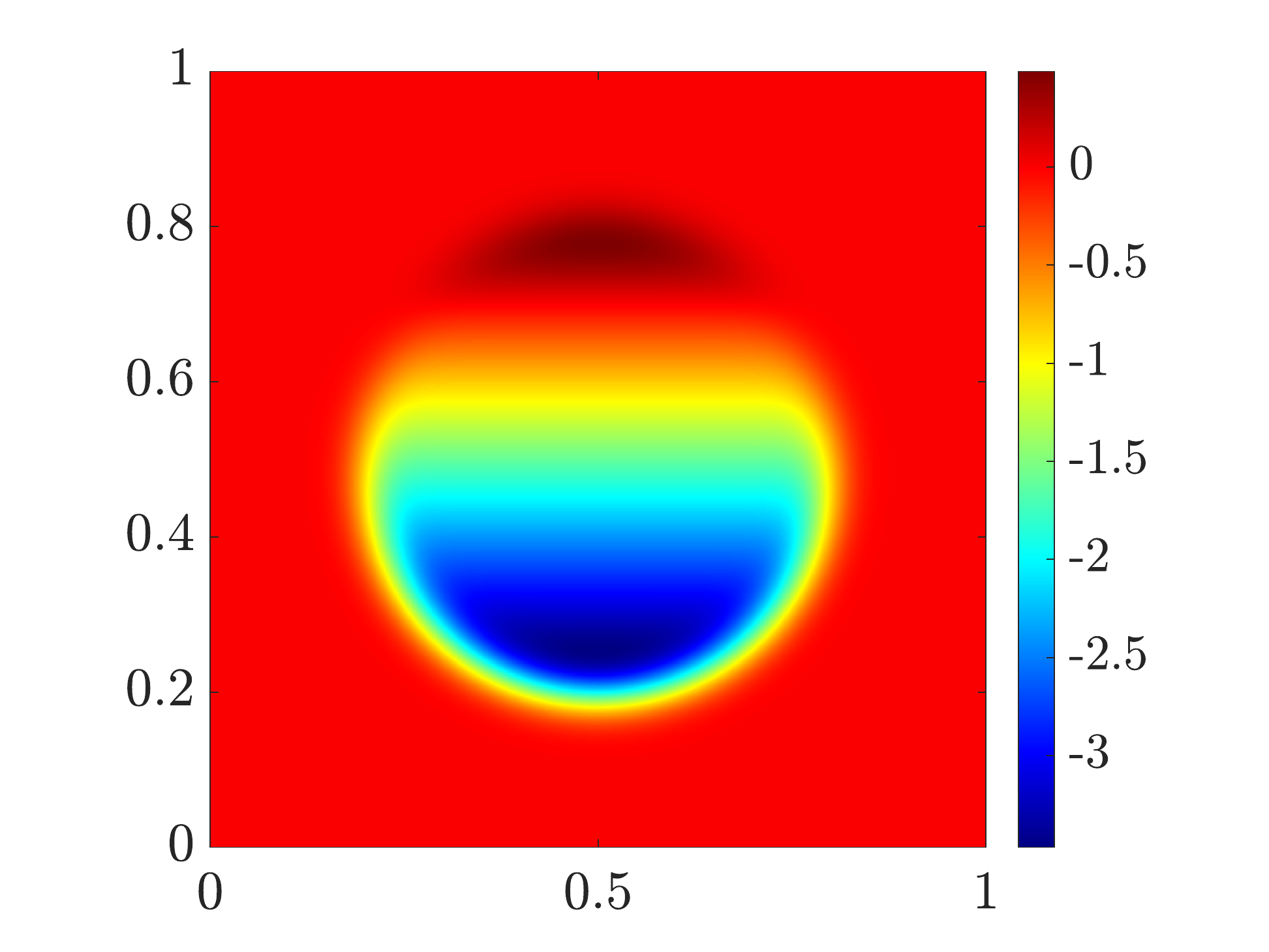}
  \includegraphics[scale=0.3]{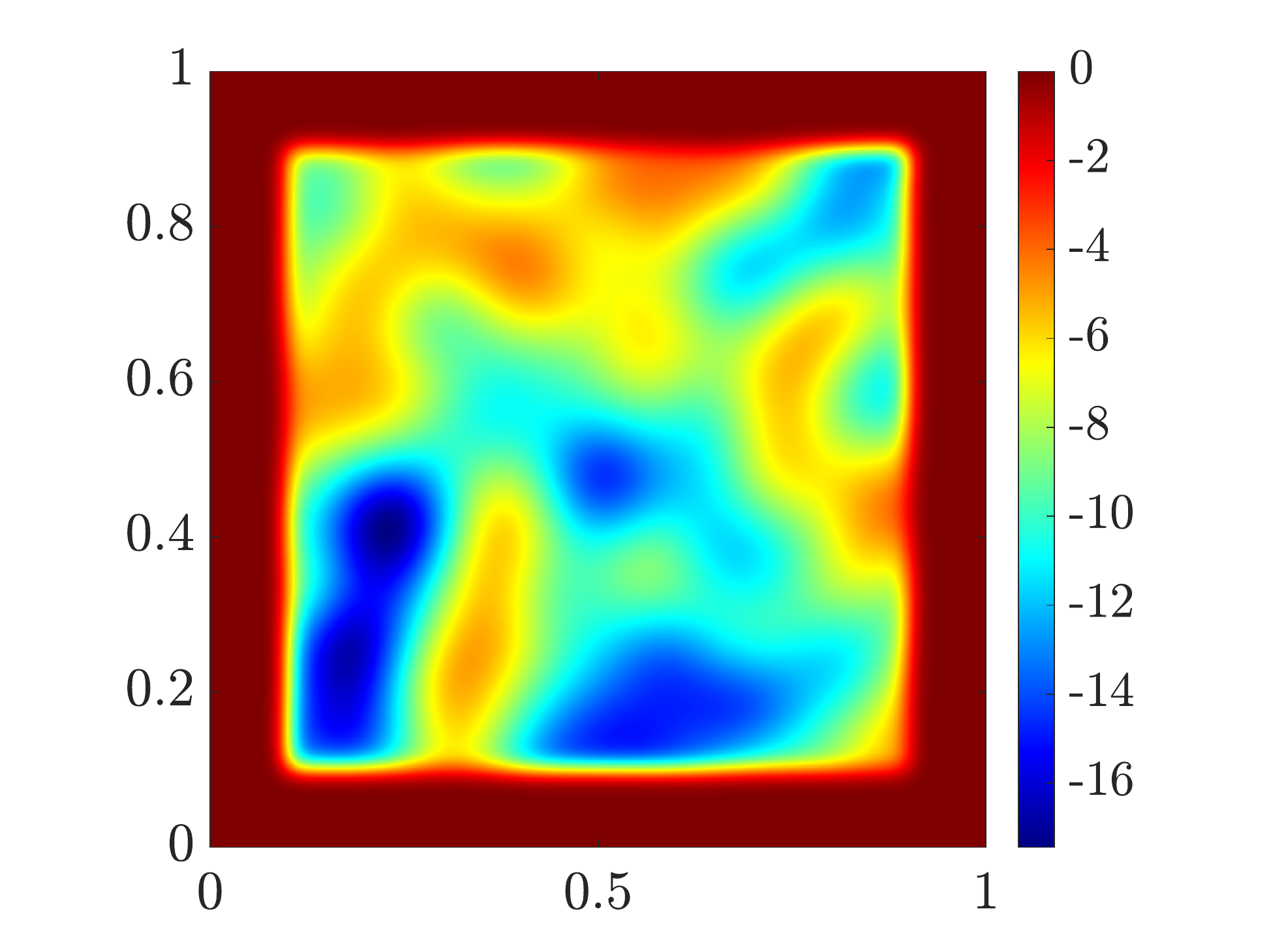}
  \begin{center}
    \includegraphics[scale=0.3]{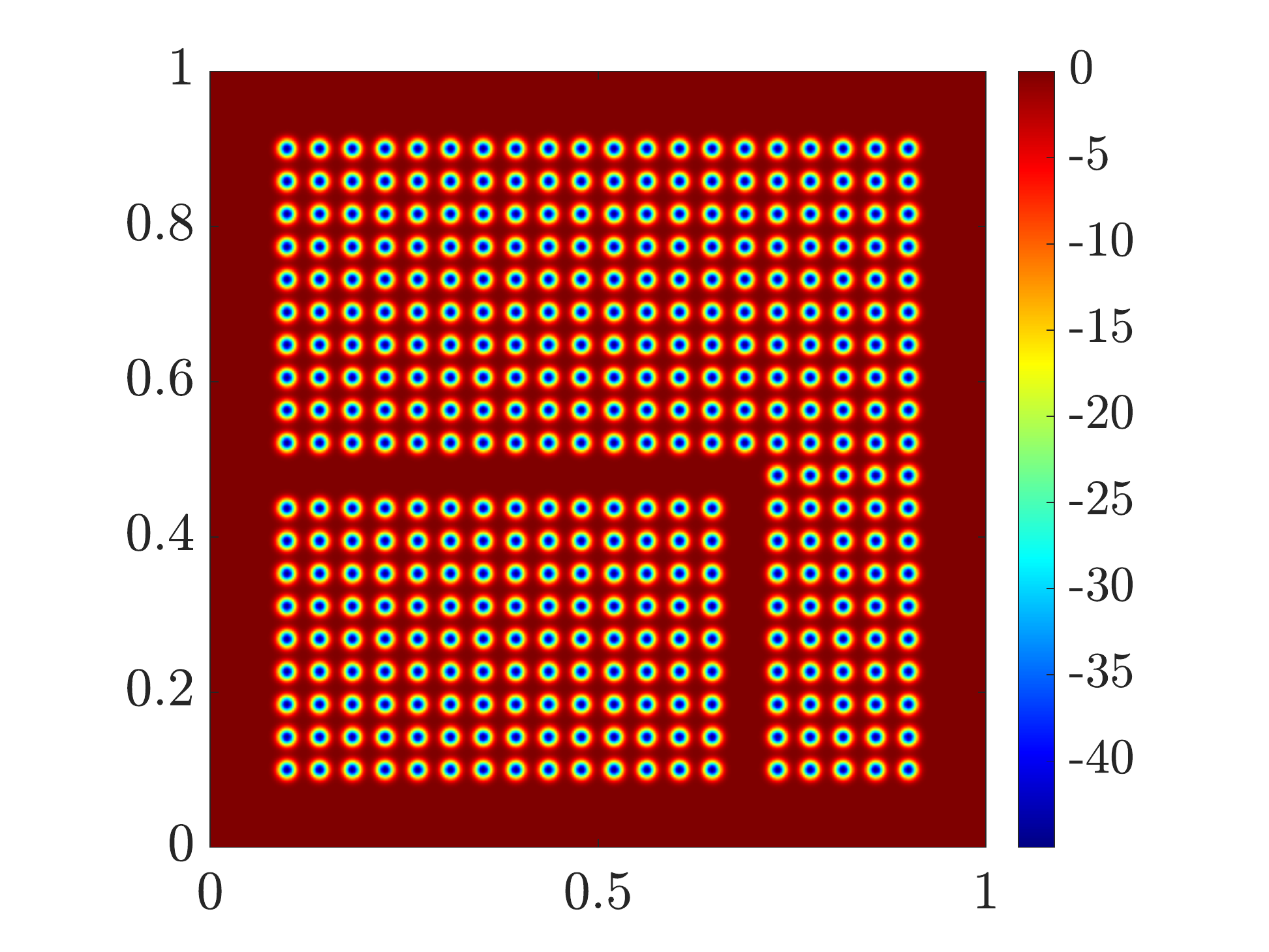}
  \end{center}
  \caption{Lens (top left), random bumps (top right), and photonic crystal
    (bottom)
  scattering potentials.}
  \label{fig:hps_paper_pots}
\end{figure}

\begin{table}[H]
  \centering
  \begin{tabular}{|c|cccccc|}
    \hline
    & $N$ & $h$ & iter & res & $E_{\rm near}$ & $E_{\rm far}$ \\ \hline
    Lens & 102400 & 0.003125 &  51 & 6.38e-11 & 1.27e-02 & 2.06e-03 \\
    & 409600 & 0.015625 & 9 & 6.18e-11 & 1.05e-05 & 6.77e-07 \\
    & 1638400 & 0.00078125 & 7 & 4.86e-12 & 1.14e-08 & 5.22e-10 \\
    & 6553600 & 0.000390625 & 7 & 7.64e-12 & 8.29e-11 & 2.81e-10 \\
    & 26214400 & 0.0001953125 & 9 & 5.41e-12 & 7.85e-11 & 2.79e-10 \\
    \hline Bumps & 102400 & 0.003125 & $100$ & 3.69e-02 & 3.14e-01 &
    2.53e-02\\
    & 409600 & 0.015625 & 22 & 9.84e-11 & 3.90e-04 & 2.82e-04 \\
    & 1638400 & 0.00078125 & 10 & 8.05e-12 & 4.17e-07 & 3.01e-07 \\
    & 6553600 & 0.000390625 & 13 & 9.24e-11  & 5.29e-10  & 6.83e-10 \\
    & 26214400 & 0.0001953125 & 23 & 3.71e-11 & 1.20e-10 & 9.90e-10 \\
    \hline Crystal & 102400 & 0.003125 & 24 & 6.49e-11 & 1.67e-03 & 1.09e-03 \\
    & 409600 & 0.015625 & 8 & 3.70e-12 & 2.96e-06 & 1.47e-06 \\
    & 1638400 & 0.00078125 & 7 & 2.30e-12 & 3.68e-09 & 1.29e-09 \\
    & 6553600 & 0.000390625 & 15  & 2.24e-11 & 4.81e-10  & 2.94e-10 \\
    & 26214400 & 0.0001953125 & 11 & 5.23e-11 & 4.57e-10 & 3.00e-10 \\
    \hline Crystal 2 & 102400 & 0.003125 & 24 & 9.99e-11 & 5.15e-03 & 4.01e-03  \\
    & 409600 & 0.015625 & 8 & 2.36e-12 & 7.12e-06 & 5.97e-06 \\
    & 1638400 & 0.00078125 & 7 & 1.82e-12 & 7.88e-09 & 1.15e-08 \\
    & 6553600 & 0.000390625  & 14 & 8.33e-11  & 1.67e-10 & 5.01e-09 \\
    & 26214400 & 0.0001953125 & 11 & 4.75e-11 & 1.27e-10 & 5.00e-09 \\
    \hline
  \end{tabular}
  \caption{Data for problems from \cite{gillman2015spectrally} in Section \ref{s:hps_paper}.}
  \label{t:hps_paper}
\end{table}

\begin{figure}
  \centering
  \includegraphics[width=70mm]{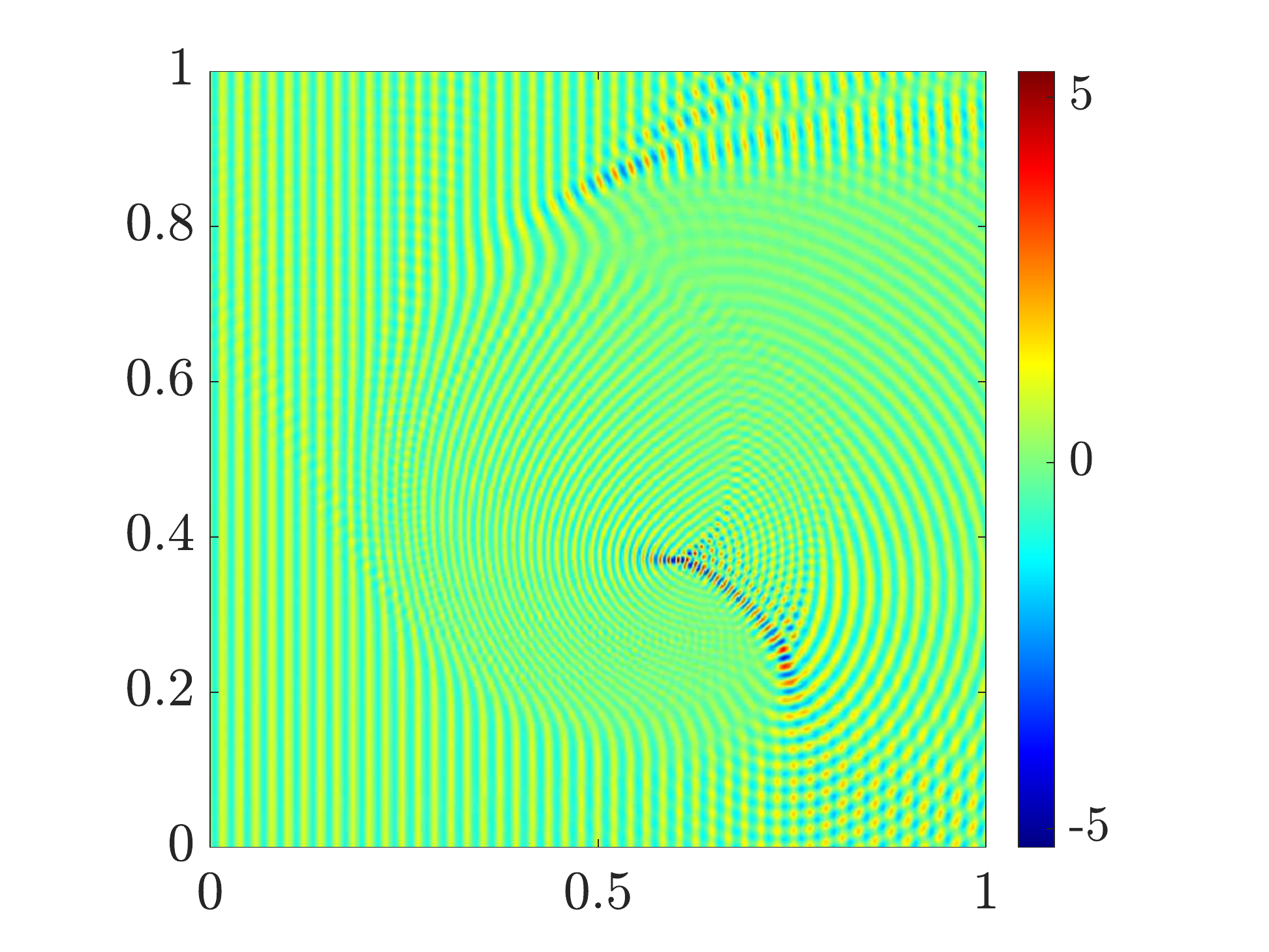}
  \includegraphics[width=70mm]{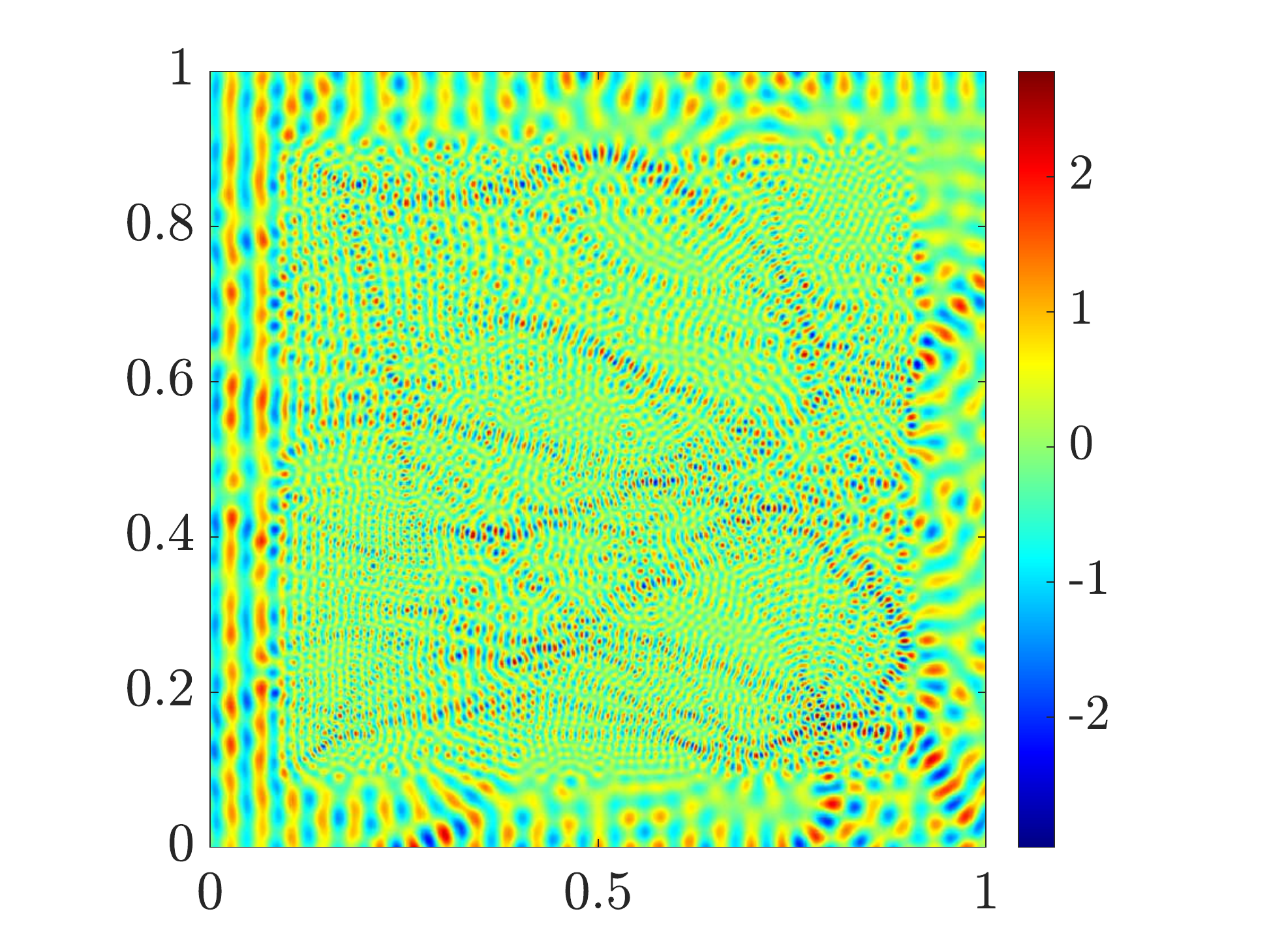}  \\
  \includegraphics[width=70mm]{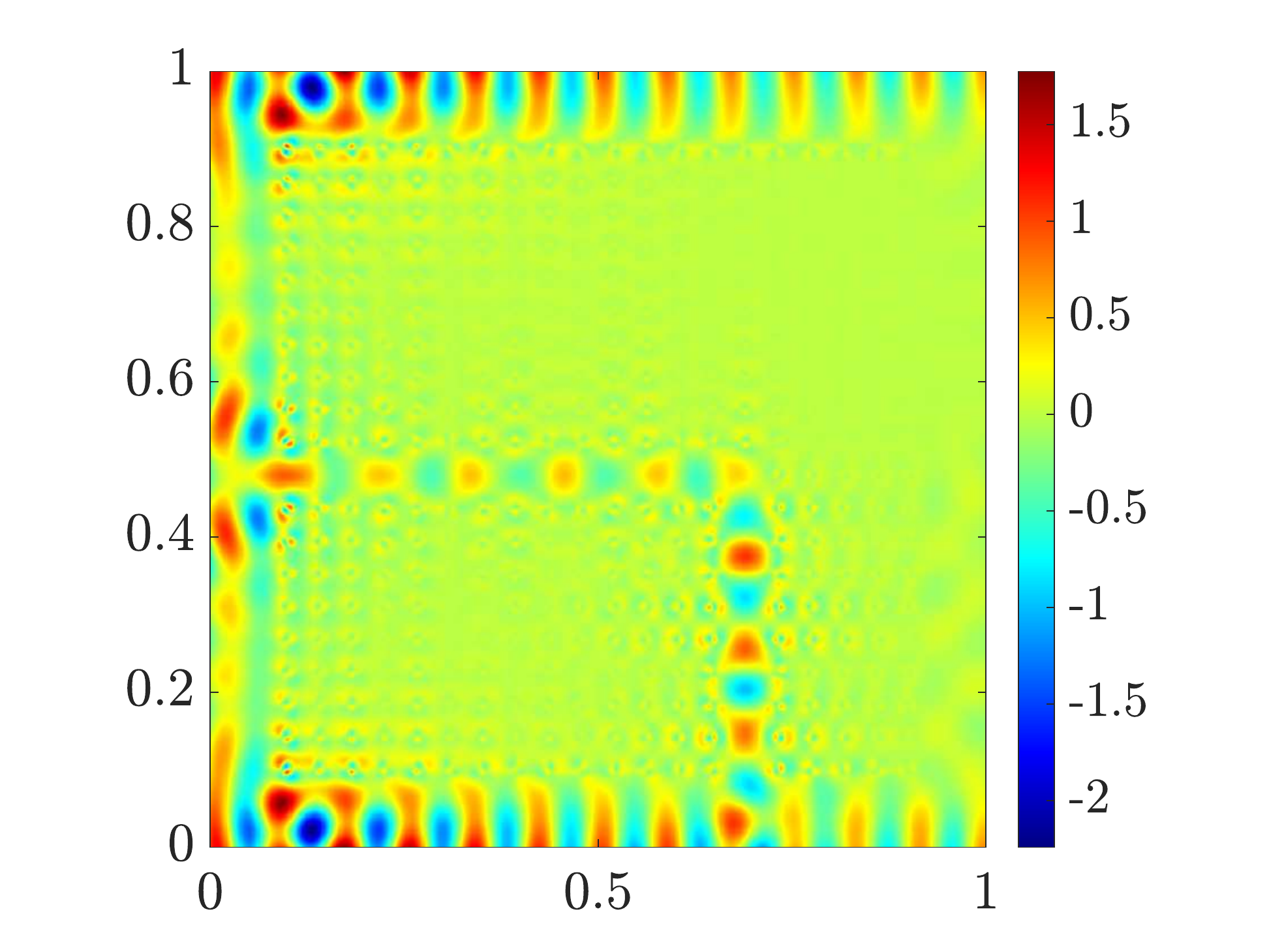}
  \includegraphics[width=70mm]{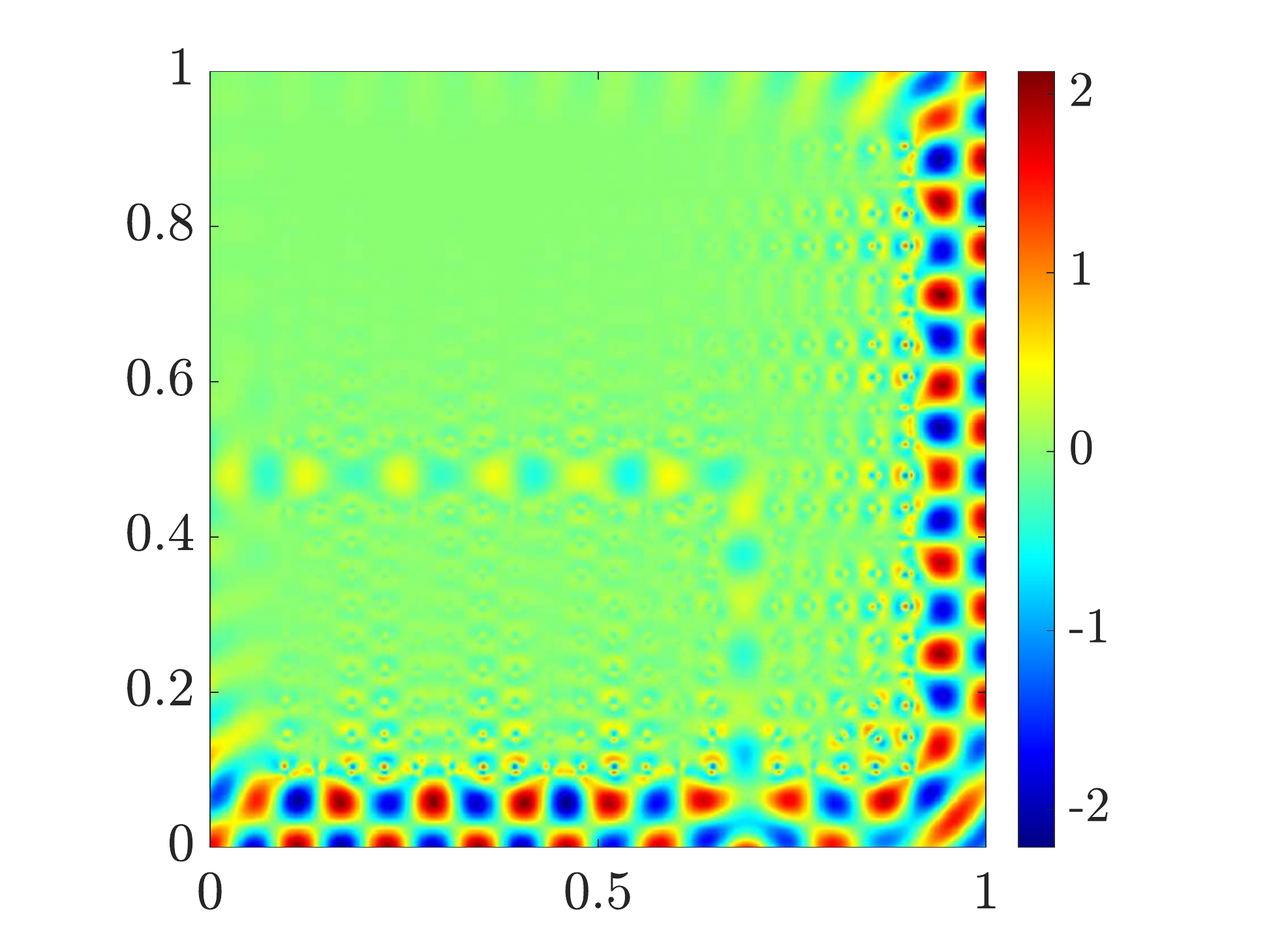}
  \caption{The real part of the total fields for the challenging problems
    from \cite{gillman2015spectrally} in Section
  \ref{s:hps_paper}.}
  \label{fig:hps_paper_fields}
\end{figure}

\section{Conclusions}
This paper describes an accelerated, high-order direct solver for the
Lippmann-Schwinger equation for acoustic scattering.
The solver is based on the Hierarchically Block Separable format, and
specifically the formulation based on scattering matrices described in
\cite{bremer2015high} and \cite[Ch.~18]{martinsson2019book}.
We demonstrate that significant acceleration can be achieved by taking
advantage of the particular form of the  kernel function in this application,
and of simplifications that arise from the use of uniform grids.
The algorithm for computing an approximate inverse has asymptotic complexity
$\mc{O}(N^{3/2})$, while applying the inverse has complexity $\mc{O}(N \log N)$.

Numerical experiments substantiate the claims in regards to the asymptotic
complexity and demonstrate very favorable practical performance.
A particularly effective method results from using the direct solver as a
preconditioner for GMRES, with numerical experiments showing how a highly
ill-conditioned cavity problem that is 500 wavelengths across can be solved
to ten accurate digits in a couple of hours on a single workstation, using
26M degrees of freedom.

The specific direct solver described here is designed primarily for handling
highly ill-conditioned problems in the high-frequency scattering regime. In
this environment, the asymptotic complexities reported match those of existing
state of the art solvers when the number of points per wavelength is fixed
as the problem size $N$ is increased. Ongoing work is concerned with accelerating
the solver to attain linear complexity in the situation where the elliptic PDE
either has non-oscillatory solutions, or where the PDE is kept fixed as the problem
size is increased, cf.~Remark \ref{rmk:fast}.

\subsubsection*{Acknowledgments}

Vladimir Rokhlin has generously shared his insights about the problems
under consideration, and we gratefully acknowledge his contributions.
We thank Ran Duan for sharing codes to compute the quadrature weights,
and Alex Barnett for sharing codes to compute the photonic crystal.
The work of PGM was funded by the Office of Naval Research (award N00014-18-1-2354),
by the National Science Foundation (awards 1952735 and 1620472), and by Nvidia Corp.

\bibliography{p}{}
\bibliographystyle{plain}

\end{document}